\title{Towards the classification of scattered binomials}
\author{Daniele Bartoli\thanks{Dipartimento di Matematica e Informatica, Universit\`a degli Studi di Perugia,  Perugia, Italy. daniele.bartoli@unipg.it},  Francesco Ghiandoni\thanks{Dipartimento di Matematica e Informatica, Universit\`a degli Studi di Perugia,  Perugia, Italy. francesco.ghiandoni@unipg.it},
Alessandro Giannoni\thanks{Dipartimento di Matematica e Applicazioni  ``R. Caccioppoli'', Università di Napoli Federico II, Napoli, Italy, alessandro.giannoni@unina.it},
Giuseppe Marino\thanks{Dipartimento di Matematica e Applicazioni  ``R. Caccioppoli'', Università di Napoli Federico II, Napoli, Italy, giuseppe.marino@unina.it}}
\date{}
\newtheorem{theorem}{Theorem}[section]
\newtheorem{remark}[theorem]{Remark}
\newtheorem{cor}[theorem]{Corollary}
\newtheorem{prop}[theorem]{Proposition}
\newtheorem{definition}[theorem]{Definition}
\def\F{\mathbb{F}}
\newcommand{\Tr}{\mathrm{Tr}}
\newcommand{\PG}{\mathrm{PG}}
\def \gammal{{\rm \Gamma L}}
\def\Fq3{{\mathbb F}_{q^3}}
\def\fq{{\mathbb F}_{q}}
\def\Fn{{\mathbb F}_{q^n}}
\def\q{{^{q}}}
\def\qq{^{q^2}}
\def\qqq{{^{q^3}}}
\begin{document}
\maketitle
\begin{abstract}
    Let \( q \) be a prime power and \( n \) an integer. An \( \mathbb{F}_q \)-linearized polynomial \( f \) is said to be scattered if it satisfies the condition that for all \( x, y \in \mathbb{F}_q^n \setminus \{ 0 \} \), whenever \( \frac{f(x)}{x} = \frac{f(y)}{y} \), it follows that \( \frac{x}{y} \in \mathbb{F}_q \).

In this paper, we focus on scattered binomials. Two families of scattered binomials are currently known: the one from Lunardon and Polverino (LP), given by 
$
f(x) = \delta x^{q^s} + x^{q^{n-s}},$
and the one from Csajbók, Marino, Polverino, and Zanella (CMPZ), given by 
$
f(x) = \delta x^{q^s} + x^{q^{s + n/2}},$
where \( n = 6 \) or \( n = 8 \).

Using algebraic varieties as a tool, we prove some necessary conditions for a binomial to be scattered. As a corollary, we obtain that when \( q \) is sufficiently large and \( n \) is prime, a binomial is scattered if and only if it is of the form (LP). Moreover we obtain a complete classification of scattered binomial in $\Fn$ when $n\leq8$ and $q$ is large enough.
\end{abstract}

\section{Introduction}\label{intro}

In this paper, \( q \) represents a prime power \( p^e \), where \( p \) is a prime. Let \( n \) be a positive integer. An \( \mathbb{F}_q \)-linearized polynomial (or simply an \( \mathbb{F}_q \)-polynomial) in \( \mathbb{F}_{q^n}[X] \) is a polynomial of the form
\[
f = \sum_{i=0}^d a_i X^{q^i}, \quad a_i \in \mathbb{F}_{q^n}, \quad i = 0, 1, \dots, d.
\]
If \( a_d \neq 0 \), the \( \mathbb{F}_q \)-polynomial \( f \) has \( q \)-degree \( d \). We can consider \( L_{n,q} \), the set of all \( \mathbb{F}_q \)-polynomials over \( \mathbb{F}_{q^n} \), and since we are interested in the evaluation of these polynomials in \( \mathbb{F}_q^n \), we can simply consider the quotient
\[
\mathcal{L}_{n,q} = L_{n,q} / (X^{q^n} - X).
\]
In this paper, we will consider \( f \) as an equivalence class in \( \mathcal{L}_{n,q} \).

Linearized polynomials have numerous applications in various fields, including combinatorics, coding theory, and cryptography. This paper focuses on a special type of \( \mathbb{F}_q \)-polynomial known as scattered polynomial, introduced by Sheekey in \cite{sheekey2016new} in the context of optimal codes in the rank metric. A scattered polynomial \( f \in \mathcal{L}_{n,q} \) satisfies the following condition for any \( x, y \in \mathbb{F}_{q^n}^* \):
\[
\frac{f(x)}{x} = \frac{f(y)}{y} \implies \frac{x}{y} \in \mathbb{F}_q.
\]

The graph of \( f \in \mathbb{F}_{q^n}[X] \) is defined as
\[
U_f = \{(x, f(x)) : x \in \mathbb{F}_{q^n} \}.
\]
If \( f \in \mathcal{L}_{n,q} \), then \( U_f \) is an \( \mathbb{F}_q \)-subspace of \( \mathbb{F}_{q^n}^2 \).

Two polynomials \( f \) and \( g \) in \( \mathcal{L}_{n,q} \) are called \( \mathrm{\Gamma L} \)-equivalent, or simply equivalent, if their graphs lie in the same orbit under the action of the group \( \mathrm{\Gamma L}(2,q^n) \).

So far, the known examples of \( \mathrm{\Gamma L} \)-inequivalent scattered polynomials are the following.
\begin{itemize}
    \item[(a)] \( f^1_s = x^{q^s} \in \mathbb{F}_q[x] \), \( 1 \leq s \leq n-1 \), \( \gcd(s, n) = 1 \), see \cite{blokhuis2000scattered, csajbok2016pseudoregulus}.
    \item[(b)] \( f^2_{s, \delta} = \delta x^{q^s} + x^{q^{n-s}} \in \mathbb{F}_q[x] \), \( n \geq 4 \), \( \text{N}_{q^n / q}(\delta) = \delta^{(q^n - 1) / (q - 1)} \notin \{0, 1\} \) (here \( \text{N}_{q^n / q}(\cdot) \) denotes the norm function from \( \mathbb{F}_{q^n} \) over \( \mathbb{F}_q \)), \( 1 \leq s \leq n-1 \), \( \gcd(s, n) = 1 \), \cite{LunardonPolverino2001, LavMarPolTro2015, sheekey2016new, lunardon2018generalized}.
    \item[(c)] \( f_{s, \delta}^3 = \delta x^{q^s} + x^{q^{s + n/2}} \in \mathbb{F}_q[x] \), \( n \in \{6, 8\} \), \( \text{N}_{q^n / q^{n/2}}(\delta) \notin \{0, 1\} \), \( 1 \leq s \leq n-1 \), \( \gcd(s, n/2) = 1 \), and certain choices of \( \delta \) and \( q \), \cite[Theorems 7.1 and 7.2]{csajbok2018new}. See also \cite{BartoliCsajbokMontanucci2021, PolverinoZullo2020rootsof...} for the case \( n = 6 \) and \cite{TimpanellaZini2024} for \( n = 8 \).
    \item[(d)] \( f_c^4 = x^q + x^{q^3} + cx^{q^5} \in \mathbb{F}_q[x] \), \( n = 6 \), \( q \) odd and \( c^2 + c = 1 \), or \( q \) even and certain choices of \( c \), see \cite{marino2020mrd, csajbok2018new2} for \( q \) odd and \cite{BartoliLongobardiMarinoTimpanella2024} for \( q \) even.
    \item[(e)] \( f_{h, t, \sigma}^5 = x^\sigma + x^{\sigma^{t-1}} - h^{1 - \sigma^{t+1}} x^{\sigma^{t+1}} + h^{1 - \sigma^{2t-1}} x^{\sigma^{2t-1}} \in \mathbb{F}_q[x] \), \( t \geq 3 \), \( n = 2t \), \( q \) odd, \( \sigma \) a generator of \( \text{Gal}(\mathbb{F}_{q^n} | \mathbb{F}_q) \), \( h^{q^t + 1} = -1 \), \cite{MR4173668, longobardi2023large, longobardi2021linear, NPZ}.
    \item[(f)] \( f_{m, t, \sigma}^6 = x^{\sigma^{t-1}} + x^{\sigma^{2t-1}} + m(x^\sigma - x^{\sigma^{t+1}}) \in \mathbb{F}_q[x] \), \( t > 4 \), \( q > 5 \), \( n = 2t \), and certain choices of \( m \in \mathbb{F}_{q^t} \), \cite{smaldore2024newscatteredlinearizedquadrinomials}.
\end{itemize}

In this paper, we will focus on scattered binomials, providing some necessary conditions for them to be scattered. We will use our results to show that when \( q \) is large enough and \( n \) is a prime number, a binomial is scattered if and only if it is of the type (b). 

The techniques used in this work rely on connections with specific algebraic varieties over finite fields. Notably, to overcome the challenge of studying overly complicated varieties, we analyze a projectively equivalent variety that is simpler to study. 

In particular, we determine sufficient conditions for their irreducibility which, thanks to the use of Lang-Weil type theorems, translate into sufficient conditions for proving that the binomials under investigation are not scattered, thus, in some cases, providing the desired classification result.

\section{Preliminaries}

\subsection{Scattered binomials}
As seen in Section \ref{intro}, the list of inequivalent $\F_q$-scattered subspaces of $\F_{q^n}^2$ defined as the graph of an $\F_q$-linearized scattered binomial consists of two families:

\begin{description}
    \item [(LP)] \[
        U_{s,\delta}^{2}=\{(x,\delta x^{q^s} + x^{q^{n-s}}): x \in \F_{q^n}\},
    \]
    where \( n\geq 4 \), \( \textnormal{N}_{q^n / q}(\delta)=\delta^{(q^n-1)/(q-1)}\notin \{0, 1\} \), \( 1\leq s \leq n-1 \), and \( \gcd(s,n)=1 \).
    
    \item [(CMPZ)] \[
        U_{s,\delta}^{3}=\{(x,\delta x^{q^s} + x^{q^{s+n/2}}):x \in \mathbb{F}_{q^n}\},
    \]
    where \( n \in \{6,8\} \), \( \textnormal{N}_{q^n /q^{n/2}}(\delta)=\delta^{1+q^{n/2}}\notin \{0,1\} \), \( 1\leq s \leq n-1 \), \( \gcd(s,n/2)=1 \), and for specific choices of \( \delta \) and \( q \).
\end{description}

The latter family was introduced by Csajbók, Marino, Polverino, and Zanella in \cite{csajbok2018new}, where they provided sufficient conditions on the coefficient \( \delta \) for the binomial to be scattered, both for \( q \) even and odd, and for \( n \in \{6,8\} \). Several papers have contributed partial results toward the classification of scattered linear sets of type
\[
L(U_{s,\delta}^{3,n})=\{\langle(x,\delta x^{q^s} + x^{q^{s+n/2}})\rangle_{\F_{q^n}}:x \in \mathbb{F}^*_{q^n}\} \subseteq \PG(1,q^n),
\]
for any even \( n \). 

It is not difficult to see that in \( \textnormal{PG}(1,q^4) \), the (LP) and (CMPZ) families overlap when \( \delta^{(q^n-1)/(q-1)}\neq 1 \). Moreover, by \cite[Theorem 3.4]{csajbokzanella2018PG1q4}, the linear sets they define are the only ones, up to \( \textnormal{GL}(2,q^4) \)-equivalence, that are maximum scattered in \( \textnormal{PG}(1,q^4) \). (Observe that for \( \delta=0 \), we obtain the linear sets of pseudo-regulus type.) This implies, in particular, that the only scattered binomials in \( \textnormal{PG}(1,q^4) \) belong to the (LP) family, introduced by Lunardon and Polverino in \cite{LunardonPolverino2001}.

The case \( n=6 \) was further investigated by Bartoli, Csajbók, and Montanucci in \cite{BartoliCsajbokMontanucci2021}, as well as by Polverino and Zullo in \cite{PolverinoZullo2020rootsof...} using different techniques. They ultimately characterized, in terms of the coefficient \( \delta \), which binomials are scattered. Furthermore, in \cite{BartoliCsajbokMontanucci2021}, the authors used these conditions on \( \delta \) to prove a conjecture proposed in \cite{csajbok2018new} concerning the number of new maximum scattered subspaces defining linear sets of type \( L(U_{s,\delta}^{3,n}) \).

\begin{theorem}\cite{BartoliCsajbokMontanucci2021},\cite[Theorem 7.3]{PolverinoZullo2020rootsof...}  \label{thm n=6 caratt scatt}
The $\fq$-linear set
\[
L(U_{s,\delta}^{3,6})=\{\langle(x,\delta x^{q^s} + x^{q^{s+3}})\rangle_{\F_{q^6}}:x \in \mathbb{F}^*_{q^6}\} \subseteq \PG(1,q^6),
\]
with \( \textnormal{N}_{q^6 / q^3}(\delta)=\delta^{1+q^3}\neq 1 \), is maximum scattered if and only if the equation
\begin{equation} \label{eq tr N n=6 caratt scatt}
    Y^2- (\Tr_{q^3/q}(A) -1)Y + \textnormal{N}_{q^3/q}(A) = 0,
\end{equation}
with \( A=\frac{\delta^{1+q^3}}{\delta^{1+q^3}-1} \), admits two roots over \( \fq \). In particular, such a \( \delta \) always exists for any \( q > 2 \). Additionally, there are exactly
\[
\left\lfloor \frac{(q^2+q+1)(q-2)}{2} \right\rfloor
\]
equivalence classes, under the action of \( \textnormal{GL}(2,q^6) \), of maximum scattered linear sets of type \( L(U_{1,\delta}^{3,6}) \).
\end{theorem}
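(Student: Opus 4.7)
The plan is to translate scatteredness into a polynomial identity over $\F_{q^3}$ and then extract the stated quadratic via Galois descent.

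After conjugating by a power of the Frobenius we may assume $s = 1$, so that $f(x) = \delta x^q + x^{q^4}$. Non-scatteredness amounts to the existence of $x, y \in \F_{q^6}^\ast$ with $z := y/x \notin \F_q$ and $f(x)y = f(y)x$; rearranging yields
\[
x^{q^4 - q} \;=\; \delta \, \frac{z^q - z}{z^{q^4} - z}.
\]
The image of the map $x \mapsto x^{q^4-q}$ on $\F_{q^6}^\ast$ coincides with $\ker \mathrm{N}_{q^6/q^3}$, so this is solvable in $x$ if and only if the norm of the right-hand side over $\F_{q^3}$ equals $1$. Using $(z^q-z)^{q^3} = z^{q^4}-z^{q^3}$ together with the identity $a^{q^3} - a + b = b^q$ (where $a = z^q - z$, $b = z^{q^3}-z$, and which follows from $(z^{q^3}-z)^q = z^{q^4}-z^q$), a direct computation collapses the norm equation to
\begin{equation}\label{eq:plan-key}
(1-M)\,(z^q-z)^{q^3+1} \;=\; (z^{q^3}-z)^{q+1}, \qquad M = \mathrm{N}_{q^6/q^3}(\delta) \ne 1,
\end{equation}
whose two sides both lie in $\F_{q^3}$; and recalling $A = M/(M-1)$ gives $1-M = -1/(A-1)$.

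For the descent step, I would parametrise $\F_{q^6} = \F_{q^3}[\omega]$ with $\omega^{q^3} = -\omega$, write $z = z_0 + z_1 \omega$ with $z_0, z_1 \in \F_{q^3}$, and set $c = \omega^2$, $\kappa = \omega^{q+1}$, $\eta = \omega^q/\omega$, all in $\F_{q^3}$. Then $b = -2 z_1 \omega$ (in odd characteristic) gives $b^{q+1} = 4\kappa\, z_1^{q+1}$, while $a = (z_0^q - z_0) + (\eta z_1^q - z_1)\omega$ gives $a^{q^3+1} = (z_0^q - z_0)^2 - c(\eta z_1^q - z_1)^2$. Substituting these into \eqref{eq:plan-key} produces an $\F_{q^3}$-relation in $(z_0, z_1)$ parametrised by $M$; quotienting by the affine symmetries $z \mapsto \lambda z + c_0$ with $\lambda \in \F_q^\ast$, $c_0 \in \F_q$ (which preserve \eqref{eq:plan-key}) and tracking the $\F_q$-invariants through $\mathrm{N}_{q^3/q}$, one obtains a single $\F_q$-rational resolvent $Y$ satisfying
\[
Y^2 - (\Tr_{q^3/q}(A) - 1)\,Y + \mathrm{N}_{q^3/q}(A) = 0.
\]
The $\F_q$-roots of this quadratic parametrise solution classes of \eqref{eq:plan-key} with $z$ in a proper subfield of $\F_{q^6}$ (and, under $M \ne 0, 1$, a check shows these force $z \in \F_q$, hence no obstruction to scatteredness), while non-$\F_q$ roots produce degree-six $z$'s that do obstruct scatteredness. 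Therefore the binomial is scattered iff both roots lie in $\F_q$.

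The main technical obstacle is the explicit construction of $Y$ and the verification that its elementary symmetric functions equal $\Tr_{q^3/q}(A) - 1$ and $\mathrm{N}_{q^3/q}(A)$ on the nose, which requires careful bookkeeping of the $\omega$- and $A$-dependencies (with characteristic two needing a parallel but analogous treatment). Once the criterion is in place, the counting of equivalence classes reduces to enumerating admissible $A \in \F_{q^3} \setminus \{0,1\}$ making the quadratic reducible over $\F_q$, and then modding out by the $\mathrm{GL}(2, q^6)$-action on $\{U^3_{1,\delta}\}$, which descends to an involution on the parameter space pairing most admissible values and leaving a few fixed. A case analysis on fixed points yields $\lfloor (q^2+q+1)(q-2)/2 \rfloor$ equivalence classes, with the floor function absorbing the self-paired correction.
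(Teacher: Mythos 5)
There is a genuine gap. Your opening reduction is correct and worth keeping: writing $y=zx$, the condition $f(x)/x=f(y)/y$ becomes $x^{q^4-q}=-\delta\,\frac{z^q-z}{z^{q^4}-z}$ (you dropped a sign, harmless since $\mathrm{N}_{q^6/q^3}(-1)=1$), the image of $x\mapsto x^{q^4-q}$ is indeed $\ker\mathrm{N}_{q^6/q^3}$, and with $a=z^q-z$, $b=z^{q^3}-z$ the identity $a^{q^3}-a+b=b^q$ does collapse the norm condition to $(1-M)\,a^{q^3+1}=b^{q+1}$. But this is only the set-up. The entire content of the theorem is the passage from ``there exists $z\in\F_{q^6}\setminus\F_q$ satisfying $(1-M)\mathrm{N}_{q^6/q^3}(z^q-z)=(z^{q^3}-z)^{q+1}$'' to ``the quadratic $Y^2-(\Tr_{q^3/q}(A)-1)Y+\mathrm{N}_{q^3/q}(A)=0$ fails to have two roots in $\F_q$,'' and you do not carry it out: the resolvent $Y$ is never constructed, the claim that $\F_q$-roots correspond to harmless $z\in\F_q$ while non-$\F_q$ roots produce obstructing $z$ is asserted without argument, and you explicitly defer the verification of the symmetric functions as ``the main technical obstacle.'' The same is true of the count $\lfloor (q^2+q+1)(q-2)/2\rfloor$, for which no argument is given. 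A plan that names the decisive step as an open obstacle is not a proof of either assertion in the statement.

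For comparison: the paper does not prove this theorem either (it is imported from Bartoli--Csajb\'ok--Montanucci and Polverino--Zullo), but in its Section 5 it re-derives the scatteredness criterion by a route quite different from yours. There the variety $\mathcal{Z}$ attached to the binomial is shown to split as $\mathcal{W}_1\cup\mathcal{W}_2$, where the components are cut out by two explicit quadrics $F_1,F_2$ whose coefficients $A,B,C,D$ are roots of concrete quadratics $p_1(T),p_2(T)$ in $\beta=\delta^{1+q^3}$; scatteredness is equivalent to the Frobenius collineation $\Psi_q$ swapping $\mathcal{W}_1$ and $\mathcal{W}_2$, which happens precisely when $A,B,C,D\in\F_{q^3}$, and the discriminant of $p_1$ is then matched, up to the square factor $\mathrm{N}_{q^3/q}(\beta-1)^2$, with the discriminant of Equation \eqref{eq tr N n=6 caratt scatt}. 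If you want to complete your argument along your own lines, the missing piece is an honest solvability analysis of $(1-M)\bigl[(z_0^q-z_0)^2-c(\eta z_1^q-z_1)^2\bigr]=4\kappa z_1^{q+1}$ over $\F_{q^3}$ as $M$ varies, including the even-characteristic case and the class count; as it stands, neither direction of the ``if and only if'' is established.
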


For the case \( n=8 \), explicit sufficient conditions for obtaining maximum scattered linear sets have been provided in \cite[Theorem 7.2]{csajbok2018new}. Specifically, if \( q \) is odd and \( \textnormal{N}_{q^8/q^4}(\delta)=\delta^{1+q^4}=-1 \), then \( f_{s,\delta}=x^{q^s}+\delta x^{q^{s+4}} \) is scattered in \( \F_{q^8} \). Moreover, for \( q\leq 11 \) odd, such a condition on \( \delta \) is also necessary (see \cite[Remark 7.4]{csajbok2018new}). 

Motivated by this, Timpanella and Zini \cite{TimpanellaZini2024} investigated the asymptotic case, i.e., for large odd \( q \). Through a detailed analysis of certain \( 3 \)-dimensional \( \mathbb{F}_q \)-rational algebraic varieties in a \( 7 \)-dimensional projective space, they proved the following result.

\begin{theorem}\cite[Theorem 1.1]{TimpanellaZini2024} \label{thm n=8 timp zini}
Let \( q\geq 1039891 \) be odd, \( \delta \in \mathbb{F}^*_{q^8} \), and \( s \in \{1,3,5,7\} \). Then 
\( f_{s,\delta}=x^{q^s}+\delta x^{q^{s+4}} \) is scattered over \( \mathbb{F}_{q^8} \) if and only if  \( \delta^{1+q^4}=-1 \).
Moreover, the corresponding \( 2 \)-dimensional \( \F_{q^8} \)-linear MRD code 
$
\mathcal{C}_{s,\delta}=\langle x, f_{s,\delta} \rangle_{\F_{q^8}}
$
is inequivalent to all previously known MRD codes.
\end{theorem}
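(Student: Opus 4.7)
The plan is to translate the scattered condition for $f_{s,\delta}$ into the non-existence of certain $\F_q$-rational points on an algebraic variety, then use Lang--Weil estimates combined with an absolute irreducibility analysis to force such points to exist whenever $c := \delta^{1+q^4}$ is not $-1$. By composing with an appropriate power of the Frobenius (which preserves scattered-ness and the value $N_{q^8/q^4}(\delta)$), one may assume $s = 1$. Setting $v = x/y$, the relation $f_{s,\delta}(x)/x = f_{s,\delta}(y)/y$ with $x, y \neq 0$ becomes
$$(v^{q^s-1}-1)\,y^{q^s-1} + \delta (v^{q^{s+4}-1}-1)\,y^{q^{s+4}-1} = 0;$$
for $v \notin \F_q$, $\gcd(s,8)=1$ forces both parenthesized factors to be nonzero, and the existence of a nonzero $y$ is equivalent --- using that the image of $z \mapsto z^{q^s(q^4-1)}$ on $\F_{q^8}^*$ coincides with the kernel of $N_{q^8/q^4}$, together with $(-1)^{q^4+1} = 1$ --- to
$$(v-v^{q^s})(v^{q^4}-v^{q^{s+4}}) = c\,(v-v^{q^{s+4}})(v^{q^4}-v^{q^s}).$$
Hence $f_{s,\delta}$ is \emph{not} scattered if and only if this equation admits a solution $v \in \F_{q^8} \setminus \F_q$.

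The degenerate values of $c$ are disposed of directly: $c = 0$ is excluded by $\delta \neq 0$; if $c = 1$, every $v \in \F_{q^4} \setminus \F_q$ satisfies the equation (since $v^{q^4} = v$ makes the two sides coincide), so $f_{s,\delta}$ is not scattered; and sufficiency of $c = -1$ is \cite[Theorem 7.2]{csajbok2018new}. In the substantive case $c \in \F_{q^4}^* \setminus \{-1, 0, 1\}$ I would interpret the above equation as defining an $\F_q$-rational algebraic variety $\mathcal{V}_c$: writing $v$ in an $\F_q$-basis of $\F_{q^8}$ and regarding its coordinates as independent variables, the eight Frobenius conjugates of the equation --- after projecting away the redundancies among them --- cut out a three-dimensional subvariety of a seven-dimensional projective space whose $\F_q$-points correspond to solutions. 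The trivial locus $\{v \in \F_q\}$ contributes only $O(q)$ rational points, so it suffices to exhibit an absolutely irreducible $\F_q$-rational component $\mathcal{W}_c \subseteq \mathcal{V}_c$ not contained in this locus; Lang--Weil then gives
$$|\mathcal{W}_c(\F_q)| \geq q^3 - K\,q^{5/2},$$
with $K$ depending only on the degree and embedding dimension of $\mathcal{W}_c$, so that once $q$ exceeds an explicit threshold --- a careful book-keeping of the constants recovers the numerical bound $q \geq 1039891$ --- the count strictly exceeds the $O(q^2)$ contribution of the trivial locus and of any lower-dimensional components, contradicting scattered-ness.

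The main obstacle is the proof of absolute irreducibility of $\mathcal{W}_c$ for every such $c$. Rather than working with the naive model of $\mathcal{V}_c$, which has prohibitively high degree and is visibly reducible under the Galois action, I would pass to a projectively equivalent, simpler model via a change of coordinates that exploits the symmetry between the Galois actions of $x \mapsto x^{q^4}$ and $x \mapsto x^{q^s}$ --- for instance, by introducing variables $w^{\pm} = v \pm v^{q^4}$ together with their $q^s$-conjugates. On this simpler model I would analyze the singular locus, exhibit a smooth $\F_q$-rational point outside the trivial component, and conclude absolute irreducibility via a Bertini-type argument or by direct inspection of a generic hyperplane section. Finally, the inequivalence of the MRD code $\mathcal{C}_{s,\delta}$ from previously known MRD codes is essentially orthogonal to the scattered analysis and would be established by computing discriminating invariants (left and right idealisers, the adjoint, the Delsarte dual) and comparing them with the corresponding invariants of the codes in families (a)--(f).
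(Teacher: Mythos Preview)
This theorem is quoted from \cite{TimpanellaZini2024} and is not proved in the present paper; the only related argument here is the a posteriori analysis in Section~5.2, which shows, via the variety $\mathcal{Z}$ of System~\eqref{sist sole x caso gcd 4}, that $\mathcal{Z}$ is reducible precisely when $\beta=\alpha^{1+q^4}=-1$. Your initial reduction is correct and essentially matches this framework: the equation $(v-v^{q^s})(v^{q^4}-v^{q^{s+4}})=c\,(v-v^{q^{s+4}})(v^{q^4}-v^{q^s})$ is exactly the content of the first equation of~\eqref{sist sole x caso gcd 4} after the identification $X_i=v^{q^i}$, and your dispatch of the cases $c\in\{0,1\}$ and the citation for $c=-1$ are fine.

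The genuine gap is in the heart of the argument. You correctly identify absolute irreducibility (more precisely: existence of an $\F_q$-rational absolutely irreducible component) as the key obstacle, but your proposed tools do not establish it. Exhibiting a smooth $\F_q$-rational point on $\mathcal{V}_c$ does not rule out that $\mathcal{V}_c$ splits into several Galois-conjugate components all passing through that point, and ``Bertini'' goes the wrong way for your purposes: it deduces irreducibility of a generic section \emph{from} irreducibility of the ambient variety, not conversely. The substitution $w^{\pm}=v\pm v^{q^4}$ is suggestive but you do not show how it simplifies the component structure. By contrast, both \cite{TimpanellaZini2024} and Section~5.2 here proceed by explicit elimination: one solves rationally for some of the $X_i$, reduces $\mathcal{Z}$ to a pair of conics $a_jX_k^2+b_jX_k+c_j=0$ over $\F_{q^4}(X_1,X_3,X_5,X_7)$, computes the discriminants $\Delta_1,\Delta_2$ and shows that $\mathcal{Z}$ factors if and only if $\Delta_1$ and $\Delta_2$ are proportional, which after a short computation in $\beta$ forces $\beta^{q^3+q^2+q-1}=1$ and hence $\beta=-1$. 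This concrete algebra is what your sketch is missing; without it, the Lang--Weil step cannot be invoked and the numerical bound $q\geq 1039891$ cannot be recovered.
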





Results suggest that binomials of type (LP) are the only ones that remain scattered for infinitely many values of \( n \) and \( q \), in both the even and odd cases.

 In the following, to further support this conjecture, we will establish a classification result for scattered binomials, demonstrating the existence of infinitely many values of \( n \) for which, when \( q \) is sufficiently large, the only scattered binomials over \( \mathbb{F}_{q^n} \) belong to the (LP) family (see Corollary \ref{cor solo bin LP}). To achieve this, we reformulate the problem as the study of certain algebraic varieties of small degree relative to the size of the base field and apply Lang-Weil-type theorems (see Theorem \ref{thm lang weil versione tredici terzi}) to establish the existence of \( \Fn \)-rational points on these varieties.

\subsection{On the equivalence of scattered binomials}
Let us consider the non-degenerate symmetric bilinear form $\langle\cdot,\cdot\rangle$ of $\F_{q^n}$ over $\F_q$ defined by
$$\langle x,y \rangle = \Tr_{q^n/q}(xy),$$
where $\Tr_{q^n/q}(x) := x + x^q + \cdots + x^{q^{n-1}}$.

Given an $\F_q$-linearized polynomial $f(X) = \sum_{i=0}^{n-1} a_i x^{q^i}$, we can consider it as a map from $\F_{q^n}$ to itself. The adjoint map of $f(x)$ with respect to this bilinear form is $\hat{f}(x) = \sum_{i=0}^{n-1} a_i^{q^{n-i}} x^{q^{n-i}}$.   

\begin{theorem} \cite[Lemma 3.1]{Csajbokmarinopolverino2018classesofequivalence} \label{thm equivalenza aggiunto}
    Let $f$ be an $\F_q$-linearized polynomial in $\F_{q^n}$. Then $f$ is scattered if and only if $\hat{f}$ is scattered.
\end{theorem}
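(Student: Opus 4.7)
My plan is to give a duality-based proof built around a well-chosen non-degenerate $\F_q$-bilinear form on $\F_{q^n}^2$. The form I would use is
$$\beta\bigl((x_1,y_1),(x_2,y_2)\bigr) = \Tr_{q^n/q}(x_1 y_2 - y_1 x_2),$$
and for an $\F_q$-subspace $U \subseteq \F_{q^n}^2$ I will write $U^\perp$ for its $\beta$-perpendicular. The whole argument consists in showing (i) that $U_f^\perp = U_{\hat f}$ and (ii) that $\perp$ preserves scatteredness; together these give the conclusion.

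For step (i), a point $(a,b)$ lies in $U_f^\perp$ iff $\Tr_{q^n/q}(a f(x) - b x) = 0$ for every $x \in \F_{q^n}$. Using the defining property of the adjoint $\Tr_{q^n/q}(a f(x)) = \Tr_{q^n/q}(\hat f(a)\, x)$ (which is how $\hat f$ was constructed), this condition becomes $\Tr_{q^n/q}\bigl((\hat f(a)-b)x\bigr)=0$ for all $x$. Non-degeneracy of the trace form then forces $b=\hat f(a)$, so $U_f^\perp=U_{\hat f}$.

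For step (ii), the key observation is that every $\F_{q^n}$-line $W = \langle (a,b)\rangle_{\F_{q^n}}$ of $\F_{q^n}^2$ is $\beta$-totally isotropic: indeed, for $\lambda,\mu\in\F_{q^n}$,
$$\beta(\lambda(a,b),\mu(a,b)) = \Tr_{q^n/q}(\lambda\mu ab - \lambda\mu ba) = 0,$$
so $W \subseteq W^\perp$, and comparing $\F_q$-dimensions (both equal to $n$) gives $W=W^\perp$. Combining this with the standard dimension identity $\dim_{\F_q}(U^\perp \cap W^\perp) = 2n - \dim_{\F_q}(U+W) = \dim_{\F_q}(U\cap W)$ applied to $U=U_f$, I obtain $\dim_{\F_q}(U_{\hat f}\cap W) = \dim_{\F_q}(U_f\cap W)$ for every $\F_{q^n}$-line $W$. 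Since a graph $U_g$ is scattered precisely when $\dim_{\F_q}(U_g\cap W)\leq 1$ for every such $W$, this equality gives the desired equivalence.

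The plan is essentially a bookkeeping argument once the right form $\beta$ is fixed, so there is no serious obstacle; the only delicate point is picking $\beta$ so that simultaneously $U_f^\perp = U_{\hat f}$ holds cleanly \emph{and} the $\F_{q^n}$-lines are $\beta$-isotropic, which is why I use the alternating version $\Tr_{q^n/q}(x_1 y_2 - y_1 x_2)$ rather than the symmetric one. Everything else reduces to the adjoint identity and to the standard perpendicular dimension formula.
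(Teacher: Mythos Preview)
Your argument is correct. The paper itself does not supply a proof of this statement---it is quoted verbatim from \cite[Lemma~3.1]{Csajbokmarinopolverino2018classesofequivalence}---so there is no in-paper proof to compare against. That said, your approach is exactly the one underlying the cited lemma: one shows that for every $\F_{q^n}$-line $W$ of $\F_{q^n}^2$ the intersections $U_f\cap W$ and $U_{\hat f}\cap W$ have the same $\F_q$-dimension (equivalently, the linear sets $L_{U_f}$ and $L_{U_{\hat f}}$ coincide pointwise with the same weights), and scatteredness is the special case ``all weights are at most $1$''. Your choice of the alternating trace form $\beta\bigl((x_1,y_1),(x_2,y_2)\bigr)=\Tr_{q^n/q}(x_1y_2-y_1x_2)$ is a clean way to package both ingredients at once, since it makes every $\F_{q^n}$-line self-perpendicular while simultaneously giving $U_f^{\perp}=U_{\hat f}$ via the adjoint identity; the rest is just the standard $\dim U^\perp\cap W^\perp = 2n-\dim(U+W)$ bookkeeping. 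No gaps.
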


In this paper, we will study the scattered properties of binomials $f(x) = x^{q^I} + \alpha x^{q^J}$ in $\F_{q^n}$, with $1 \le I, J \le n - 1$, $I \notin \{J, n - J\}$. From the $\mathrm{\Gamma L}$-equivalence on polynomials, we can assume that $I < J$, and since studying the scattered properties of $f(x) = x^{q^I} + \alpha x^{q^J}$ is equivalent to studying $\hat{f}(x) = x^{q^{n - I}} + \alpha^{q^{n - J}} x^{q^{n - J}}$, we can also assume that $I < \frac{n}{2}$. We will use these assumptions in the following sections to simplify the case analysis.

In the case of (CMPZ) binomials, there is also this result.
\begin{prop}\cite[Proposition 5.1]{csajbok2018new} \label{prop equiv CMPZ somma eq n/2}
Two $\F_q$-subspaces $U^3_{s, \delta}$ and $U^3_{\overline{s}, \overline{\delta}}$ of $V = \F_{q^n} \times \F_{q^n}$ of the \textnormal{(CMPZ)} form with $\delta, \bar{\delta} \in \F_{q^n}^*$,  $\textnormal{N}_{q^n/q^{n/2}}(\delta) \ne 1$, $\textnormal{N}_{q^n/q^{n/2}}(\bar{\delta}) \ne 1$,  $1 \leq s, \bar{s} < n$ and $\gcd(n, s) = \gcd(n, \bar{s}) = 1$, are ${\rm \Gamma L}(2, q^n)$-equivalent if and only if either 
$$s = \bar{s} \mbox{\quad and \quad} \textnormal{N}_{q^n/q^n}(\bar{\delta}) = \textnormal{N}_{q^n/q^n}(\delta)^\sigma$$ 
or 
$$s + \bar{s} = n \mbox{\quad and \quad} \textnormal{N}_{q^n/q^{n/2}}(\bar{\delta}) \textnormal{N}_{q^n/q^{n/2}}(\delta)^\sigma = 1,$$
for some automorphism $\sigma \in \textnormal{Aut}(\F_{q^{n/2}})$.  
\end{prop}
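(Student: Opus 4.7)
My plan is to translate the $\mathrm{\Gamma L}(2,q^n)$-equivalence between $U^3_{s,\delta}$ and $U^3_{\bar s,\bar\delta}$ into an identity in $\mathcal{L}_{n,q}$ and then to extract constraints by matching the $q$-power supports of the two sides modulo $n$. Writing a general element of $\mathrm{\Gamma L}(2,q^n)$ as $\sigma\in\Aut(\F_{q^n})$ followed by a matrix $M=\begin{pmatrix}a&b\\c&d\end{pmatrix}\in\mathrm{GL}(2,q^n)$, and setting $f^{\sigma}(z)=\delta^{\sigma}z^{q^{s}}+z^{q^{s+n/2}}$, the equivalence $M\cdot\sigma(U^3_{s,\delta})=U^3_{\bar s,\bar\delta}$ is equivalent to
$$\bar\delta\bigl(az+bf^{\sigma}(z)\bigr)^{q^{\bar s}}+\bigl(az+bf^{\sigma}(z)\bigr)^{q^{\bar s+n/2}}\equiv cz+df^{\sigma}(z)\pmod{z^{q^{n}}-z}.$$

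Next I would expand the left-hand side and observe that its support is contained in $P_L=\{\bar s,\bar s+n/2,\bar s+s,\bar s+s+n/2\}\pmod n$, while the right-hand side has support $P_R=\{0,s,s+n/2\}$. Since $n$ is even, the conditions $\gcd(s,n)=\gcd(\bar s,n)=1$ force $s,\bar s$ to be odd, so $\bar s$ and $\bar s+n/2$ both lie in $\{1,\dots,n-1\}\setminus\{n/2\}$. The matching $P_L\leftrightarrow P_R$ then splits, up to the trivial $\mathrm{GL}$-equivalence $(x,y)\mapsto(x,\delta^{-1}y)$ that identifies $U^3_{s,\delta}$ with $U^3_{s+n/2,\delta^{-1}}$, into two essential cases: either $\bar s\equiv s$, in which case the extraneous positions $\{2s,2s+n/2\}$ impose $b=0$; or $\bar s+s\equiv 0\pmod n$, in which case the extraneous positions $\{\bar s,\bar s+n/2\}$ impose $a=0=d$.

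In each case the announced norm condition comes from the remaining coefficient equalities. If $s=\bar s$ and $b=0$, the equations $d=a^{q^{s+n/2}}$ and $\bar\delta a^{q^{s}}=d\delta^{\sigma}$ combine into $\bar\delta=a^{q^{s}(q^{n/2}-1)}\delta^{\sigma}$; raising to the $(1+q^{n/2})$-th power and using $a^{q^{n}-1}=1$ yields $\mathrm{N}_{q^{n}/q^{n/2}}(\bar\delta)=\mathrm{N}_{q^{n}/q^{n/2}}(\delta)^{\sigma}$. If instead $s+\bar s=n$ and $a=d=0$, the vanishing of the coefficient at the position $n/2$ reads $\bar\delta b^{q^{\bar s}}+b^{q^{\bar s+n/2}}(\delta^{\sigma})^{q^{\bar s+n/2}}=0$; solving for $\bar\delta$ and taking the $\F_{q^{n}}/\F_{q^{n/2}}$-norm on both sides, using $(-1)^{1+q^{n/2}}=1$ and $b^{q^{\bar s}(q^{n}-1)}=1$, produces $\mathrm{N}_{q^{n}/q^{n/2}}(\bar\delta)\,\mathrm{N}_{q^{n}/q^{n/2}}(\delta)^{\sigma}=1$. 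The converse direction is, in both cases, a routine application of Hilbert~90 for the cyclic extension $\F_{q^{n}}/\F_{q^{n/2}}$: given $\delta,\bar\delta$ satisfying a given norm relation, one explicitly solves for the matrix entry $a$ (respectively $b$) that realizes the equivalence.

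I expect the main obstacle to be the support analysis itself: the hypothesis $\mathrm{N}_{q^{n}/q^{n/2}}(\delta),\mathrm{N}_{q^{n}/q^{n/2}}(\bar\delta)\neq 1$ must be invoked at precisely the right moment to exclude degenerate configurations in which simultaneous cancellations at the two positions $\bar s+s$ and $\bar s+s+n/2$ would produce apparent equivalences outside the announced dichotomy, and thereby to force the clean case split into $s=\bar s$ versus $s+\bar s=n$.
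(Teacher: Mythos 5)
The paper gives no proof of this statement---it is quoted verbatim from \cite[Proposition 5.1]{csajbok2018new}---so your attempt can only be judged on its own merits. Your overall strategy is the right one: writing the $\gammal(2,q^n)$-equivalence as the identity $\bar\delta(az+bf^{\sigma}(z))^{q^{\bar s}}+(az+bf^{\sigma}(z))^{q^{\bar s+n/2}}=cz+df^{\sigma}(z)$ and matching $q$-power supports is exactly how such classifications are proved, your treatment of the case $b=0$, $\bar s=s$ is correct, and you correctly locate the role of $\textnormal{N}_{q^n/q^{n/2}}(\delta)\neq 1$: it is what forbids the coefficients at the two positions $s+\bar s$ and $s+\bar s+n/2$ from vanishing simultaneously when $b\neq 0$.

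There are, however, two concrete problems. First, in the case $a=d=0$, $s+\bar s=n$, your norm computation is wrong: solving $\bar\delta b^{q^{\bar s}}+b^{q^{\bar s+n/2}}(\delta^{\sigma})^{q^{\bar s+n/2}}=0$ gives $\bar\delta=-b^{q^{\bar s}(q^{n/2}-1)}(\delta^{\sigma})^{q^{\bar s+n/2}}$, and raising to the power $1+q^{n/2}$ yields $\textnormal{N}_{q^n/q^{n/2}}(\bar\delta)=\bigl(\textnormal{N}_{q^n/q^{n/2}}(\delta)^{\sigma}\bigr)^{q^{\bar s}}$, a Galois conjugate of $\textnormal{N}(\delta)$ rather than its inverse; the exponent $q^{\bar s+n/2}(1+q^{n/2})\equiv q^{\bar s}(1+q^{n/2})\pmod{q^n-1}$ produces no sign flip. (Concretely, for $n=6$, $s=1$, $\bar s=5$, $\sigma=\mathrm{id}$, the map $(x,y)\mapsto(by,cx)$ with $\bar\delta=-b^{q^2-q^5}\delta^{q^2}$ realizes the equivalence and gives $\textnormal{N}(\bar\delta)=\textnormal{N}(\delta)^{q^2}$.) Second, your case split is incomplete: the support $\{\bar s,\bar s+n/2,s+\bar s,s+\bar s+n/2\}$ also fits inside $\{0,s,s+n/2\}$ when $\bar s\equiv s+n/2$ or $\bar s\equiv n/2-s\pmod n$ (both coprime to $n$ when $4\mid n$, e.g.\ $n=8$), and it is these matchings---not $s+\bar s=n$---that produce the reciprocal condition $\textnormal{N}(\bar\delta)\textnormal{N}(\delta)^{\sigma}=1$; your own observation that $U^3_{s,\delta}$ is trivially $\mathrm{GL}$-equivalent to $U^3_{s+n/2,\delta^{-1}}$ already exhibits such an equivalence that the dichotomy you end with would declare impossible. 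You should therefore redo the enumeration of admissible $\bar s$ and re-derive the norm condition attached to each, checking the outcome against the original source (note also that the first displayed condition, with $\textnormal{N}_{q^n/q^n}$, is visibly a typo for $\textnormal{N}_{q^n/q^{n/2}}$, so the transcription in the statement itself should not be taken as the ground truth to which your computation must conform).
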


\begin{remark} \label{rem norm no 1 and I < n/4}
When $\textnormal{N}_{q^n/q^{n/2}}(\delta) = 1$, it is trivial that $f(x) = \delta x^{q^s} + x^{q^{s + n/2}}$ is not scattered since $\dim_{\F_q}(\ker(f)) = n/2$. So, when studying the (CMPZ) binomials, we will assume $\textnormal{N}_{q^n/q^{n/2}}(\delta) \neq 1$, and using the latter proposition, we can also assume $I < \frac{n}{4}$.    
\end{remark}

Lastly, since $f(x) = x^{q^I} + \alpha x^{q^J}$ is $q^{\gcd(I, J, n)}$-linearized, we can also assume $\gcd(I, J, n) = 1$ to avoid trivial cases.

\subsection{Algebraic varieties}
Here we review fundamental concepts regarding algebraic varieties, directing readers to \cite{shafarevich1994basic} for an in-depth exploration of varieties. For a comprehensive understanding of how algebraic geometric methods apply to polynomials over finite fields, \cite{Bartoli:2020aa4} serves as a valuable resource. \\
Let $q$ be a prime power. We denote by $\overline{\fq}$ the algebraic closure of $\fq$ and by $\fq[X_0,\dots,X_{r} ]$ the ring of polynomials in $r+1$ variables with coefficients in $\fq.$ Moreover, $\mathbb{P}^r(\mathbb{K})$ and $\mathbb{A}^{r}(\mathbb{K})$ (or $\mathbb{K}^r$) denote the projective and the affine space of dimension $r\in \mathbb{N}$ over a field $\mathbb{K}$, respectively. A subset of $\mathbb{P}^{r}(\mathbb{K})$ is a (projective) algebraic variety  if it is the set of zeros of a finite number of 
 homogeneous polynomials  in  $\mathbb{K}[X_{0},\dots,X_{r}]$. The dimension of an algebraic variety $\mathcal{V}$ is the maximal
integer $s$ (that we denote by $\dim(\mathcal{V})$) for which there exists a strictly decreasing chain $\mathcal{V}_0 \supsetneqq \mathcal{V}_1 \supsetneqq \dots \mathcal{V}_{s}
 \supsetneqq \emptyset$ of length $s$ of irreducible subvarieties $\mathcal{V}_i \subset \mathcal{V}.$ If $\mathcal{V}\subseteq \mathbb{P}^r(\mathbb{K})$ is defined by $t$ equations, then $\dim(\mathcal{V})\geq r-t.$ 
 If a variety $\mathcal{V} \subseteq  \mathbb{P}^r(\mathbb{F}_q)$ is defined by
$F_i(X_0, \dots , X_r) = 0,$ for $i = 1, \dots  ,s,$ an $\mathbb{F}_q$-rational point of $\mathcal{V}$ is a point
$(x_0 : \dots : x_r) \in \mathbb{P}^r(\mathbb{F}_q)$ such that $F_i(x_0, \dots, x_r) = 0,$ for $i = 1, \dots , s.$  The set of the $\mathbb{F}_q$-rational points of $\mathcal{V}$ is usually
denoted by $\mathcal{V}(\mathbb{F}_q).$ If $\dim(\mathcal{V})=s,$ we say that $\mathcal{V}$ is a variety of degree $d$ (and write $\deg(\mathcal{V})=d$) if $d=\#(\mathcal{V}\cap H),$ where $H \subseteq\mathbb{P}^r(\overline{\mathbb{F}_q}) $ is a general projective subspace of dimension $r-s.$  To determine the degree of a variety is generally not straighforward; however an upper bound to $\deg(\mathcal{V)}$ is given by $\prod_{i=1}^{s}\deg(F_i).$ \\ 
If $s=1$ and $r=2,$ $\mathcal{V}$ is a (plane) curve and it is absolutely irreducible if the corresponding polynomial $F(X_0,X_1,X_2)$ is absolutely irreducible, i.e. it is irreducible in  $\overline{\mathbb{F}_q}[X_0,X_1,X_2].$  \\
A crucial point in our investigation of scattered binomials  is to prove the existence of suitable $\mathbb{F}_q$-rational points in algebraic varieties $\mathcal{V}$ attached to each binomial. In this direction, an estimate for the number of $\fq$-rational points of an  absolutely irreducible
variety over $\fq$ is provided by the Lang-Weil bound, which is a generalization to higher dimension of
the Hasse-Weil bound for curves. We will use the following refined version of Lang-Weil bound, due to Cafure and Matera.
\begin{theorem}\cite[Theorem 7.1]{MR2206396} \label{thm lang weil versione tredici terzi}
    Let $\mathcal{V} \subseteq \mathbb{P}^r(\mathbb{F}_q)$ be an absolutely irreducible variety
defined over $\mathbb{F}_q$ of dimension $n  > 0$ and degree $\delta,$ and not contained in the hyperplane at infinity. If $q > 2(n + 1)\delta^2,$ then the following
estimate holds:
\begin{equation}
  \big|\#(\mathcal{V}( \fq)\cap \mathbb{A}^r(\mathbb{F}_q)) - q^n
\big| \leq (\delta - 1)(\delta - 2)q^{n-1/2} + 5\delta^{13/3}q^{n-1}.  
\end{equation}
\end{theorem}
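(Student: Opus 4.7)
The plan is to reduce the higher-dimensional estimate to the Hasse-Weil bound for absolutely irreducible plane curves by successive generic hyperplane sections, while tracking error terms carefully enough to extract explicit constants depending only on $\delta$ and $n$. First, since $\mathcal{V}$ is not contained in the hyperplane at infinity, a crude Schwartz-Zippel-type bound shows that the points at infinity contribute at most $\delta q^{n-1}$, which is absorbed into the error. So it suffices to estimate $\#(\mathcal{V}(\mathbb{F}_q)\cap \mathbb{A}^r(\mathbb{F}_q))$ for an absolutely irreducible affine variety of dimension $n$ and degree $\delta$.

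The core argument would proceed by induction on $n$. For $n=1$, after projecting generically to a plane (the hypothesis $q > 2(n+1)\delta^2$ provides enough $\mathbb{F}_q$-rational directions to pick one that preserves the degree and absolute irreducibility), the Hasse-Weil bound for plane curves gives
\[
\big|\#(\mathcal{V}(\mathbb{F}_q)\cap \mathbb{A}^r(\mathbb{F}_q)) - q\big| \leq (\delta-1)(\delta-2)q^{1/2} + O(\delta^{13/3}),
\]
the correction absorbing singular and projection-degenerate points. For the inductive step, I would slice $\mathcal{V}$ by affine hyperplanes $\mathcal{H}_c$ parametrised by $c \in \mathbb{F}_q^{r+1}$: an effective Bertini-type theorem, valid under the hypothesis on $q$, guarantees that all but a proportion $O(1/q)$ of these slices are absolutely irreducible of dimension $n-1$ and degree $\delta$. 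Applying the inductive bound to each good slice and counting each affine point of $\mathcal{V}(\mathbb{F}_q)$ on the appropriate number of hyperplanes through a double-counting identity yields the target estimate at dimension $n$.

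The main obstacle is making the Bertini step quantitative: one must bound, in terms of $\delta$ alone, the codimension in the dual projective space of the locus of ``bad'' hyperplanes where the intersection fails to be absolutely irreducible, drops dimension, or becomes singular in a non-generic way. The explicit constants $(\delta-1)(\delta-2)$ and $5\delta^{13/3}$ are precisely what emerges from carrying this bookkeeping through $n$ inductive steps without losing a factor of $q$, which is where this refinement improves on the original Lang-Weil statement. A secondary technical point is to ensure that absolute irreducibility is preserved over $\mathbb{F}_q$ (not just over $\overline{\mathbb{F}_q}$) at each step, which requires a Galois-theoretic descent argument combined with the irreducibility of the generic hyperplane section.
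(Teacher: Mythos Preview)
The paper does not prove this statement at all: it is quoted verbatim as \cite[Theorem 7.1]{MR2206396} (Cafure--Matera) and used as a black box. There is therefore no ``paper's own proof'' to compare your attempt against.

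That said, your sketch is broadly aligned with how Cafure and Matera actually obtain the bound: induction on $n$ via hyperplane slicing, an effective Bertini-type statement controlling the number of bad slices, and the Weil bound for curves as the base case. Where your outline remains genuinely incomplete is exactly at the point you flag yourself: the quantitative Bertini step. Getting the explicit constants $(\delta-1)(\delta-2)$ and $5\delta^{13/3}$ requires nontrivial work bounding the locus of reducible or degenerate hyperplane sections in terms of $\delta$ alone, and your proposal does not indicate how to do this beyond asserting that it can be done. Since the paper relies only on the \emph{statement} of the theorem, filling in that gap is not required for the paper's purposes.
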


\subsection{Link with algebraic varieties} \label{sec link variety scatt}
Consider $\mathbb{F}_q$-linearized binomials of type $f(x)=x^{q^{I}}+\alpha x^{q^J}$ over $\mathbb{F}_{q^n},$ with $\alpha \in \mathbb{F}_{q^n}, 0 < I < J < n.$ 

The following link with between scattered polynomials and algebraic varieties over finite fields is well known; see \cite{bartoli2018exceptional}.
\begin{prop} \label{link with curves}
    The binomial $f(x)=x^{q^{I}}+\alpha x^{q^J}$ is scattered over $\mathbb{F}_{q^n}$ if and only if all the affine points of the curve
    $$ \mathcal{C}_{f}:F(X,Y):=\dfrac{X(Y^{q^I}+\alpha Y^{q^J})-Y(X^{q^I}+\alpha X^{q^J})}{X^qY-XY^q}=0 $$ lie on the lines $\lambda X +\mu Y=0$ for some $(\lambda,\mu) \in \mathbb{F}_q^2\setminus \{(0,0)\}$.
\end{prop}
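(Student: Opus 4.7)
The plan is to reformulate the scattered condition
$$\frac{f(x)}{x} = \frac{f(y)}{y} \;\Longrightarrow\; \frac{x}{y} \in \fq$$
as a statement about the zero locus of the bivariate polynomial $G(X,Y) := Xf(Y) - Yf(X) = X(Y^{q^I}+\alpha Y^{q^J}) - Y(X^{q^I}+\alpha X^{q^J})$, and then to peel off the ``trivial'' part of this locus by dividing out $X^qY - XY^q$. A direct computation gives
$$X^qY - XY^q \;=\; Y\prod_{\mu \in \fq}(X - \mu Y),$$
so the vanishing locus of the denominator is exactly the union of the $q+1$ lines through the origin of the form $\lambda X + \mu Y = 0$ with $(\lambda,\mu) \in \fq^2 \setminus \{(0,0)\}$. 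Removing this factor from $G$ strips away precisely the $\fq$-rational pairs $(x,y)$ that are harmless for scatteredness.

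The first technical step is to verify that the quotient $F(X,Y)$ really is a polynomial. Writing
$$G(X,Y) = (XY^{q^I} - YX^{q^I}) + \alpha(XY^{q^J} - YX^{q^J}),$$
and observing that each summand $XY^{q^i} - YX^{q^i}$ with $i \geq 1$ vanishes at every $X = \mu Y$ with $\mu \in \fq$ (since $\mu^{q^i} = \mu$) and at $Y=0$, each summand is divisible by the squarefree polynomial $X^qY - XY^q$. Hence $F(X,Y) \in \Fn[X,Y]$ and $\mathcal{C}_f$ is a well-defined plane curve.

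For the forward implication, I would take an affine point $(x_0,y_0) \in \Fn^2$ of $\mathcal{C}_f$; multiplying through by $x_0^q y_0 - x_0 y_0^q$ gives $G(x_0,y_0) = 0$, i.e.\ $y_0 f(x_0) = x_0 f(y_0)$. If both $x_0, y_0 \neq 0$, scatteredness yields $x_0/y_0 \in \fq$, placing $(x_0,y_0)$ on the line $X - (x_0/y_0)Y = 0$; if $x_0 = 0$ or $y_0 = 0$, the point already sits on $X=0$ or $Y=0$, which is of the required form. Conversely, assuming every affine point of $\mathcal{C}_f$ lies on some line $\lambda X + \mu Y = 0$ with $(\lambda,\mu)\in\fq^2\setminus\{(0,0)\}$, take $x,y \in \Fn^*$ with $f(x)/x = f(y)/y$, so $G(x,y)=0$. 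Then either $x^q y - x y^q = 0$ (whence $x/y \in \fq$ directly) or $F(x,y) = 0$, in which case $(x,y)$ is an affine point of $\mathcal{C}_f$ and hence lies on some line $\lambda X + \mu Y = 0$; since $y \neq 0$, this again forces $x/y \in \fq$.

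The only genuinely non-trivial step is the divisibility fact ensuring that $F$ is a polynomial; the remainder is careful bookkeeping of the degenerate cases $x=0$ and $y=0$ and of the two directions. Once $F$ is known to be a polynomial, the stated equivalence reduces to the tautology that affine points of $\mathcal{C}_f$ lying off the $\fq$-rational lines through the origin correspond exactly to the non-trivial witnesses of non-scatteredness of $f$.
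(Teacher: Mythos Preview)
Your proof is correct. The paper does not actually supply its own argument for this proposition: it simply records the statement as ``well known'' and cites \cite{bartoli2018exceptional}, so there is nothing substantive to compare against. Your direct verification---factoring $X^qY-XY^q = Y\prod_{\mu\in\fq}(X-\mu Y)$, checking that each summand $XY^{q^i}-YX^{q^i}$ is divisible by this squarefree product, and then unpacking the two implications via the identity $G=F\cdot(X^qY-XY^q)$---is exactly the standard elementary argument behind this folklore link, and it is carried out cleanly.
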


Consider the curve 
$\mathcal{C}^{\prime}_{f}: \frac{F(XY,Y)}{Y^{q^I-q}} =0.$ There is a close relationship between the set of $\mathbb{F}_{q^n}$-rational points of such a curve and the  set of $\mathbb{F}_{q^n}$-rational points of $\mathcal{C}_f$ (note that the $\mathcal{C}^{\prime}_{f}$ is the geometric transform of $\mathcal{C}_f$ via the quadratic transformation $(x,y) \mapsto (\frac{x}{y},y)$; see \cite[Definition 4.41]{HKT}). 

This is highlighted in the following. 

\begin{prop} \label{link with curves 2}
    The binomial $f(x)=x^{q^{I}}+\alpha x^{q^J}$ is scattered over $\mathbb{F}_{q^n}$ if and only if all the affine points of the curve $\mathcal{C}^{\prime}_{f}$ lie on the lines $X +\mu =0,$ for some $\mu\in \mathbb{F}_q,$ or $Y=0$.
\end{prop}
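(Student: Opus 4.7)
The plan is to invoke Proposition~\ref{link with curves} and translate its line condition through the quadratic transformation $(x,y)\mapsto(u,v)=(x/y,y)$ that defines $\mathcal{C}^{\prime}_{f}$. First I would verify, by direct substitution in the definition of $F$, the identity
\begin{equation*}
F(XY,Y) = \frac{Y^{q^{I}+1}(X-X^{q^{I}})+\alpha Y^{q^{J}+1}(X-X^{q^{J}})}{Y^{q+1}(X^{q}-X)} = Y^{q^{I}-q}\,F^{\prime}(X,Y),
\end{equation*}
where $F^{\prime}$ is the defining polynomial of $\mathcal{C}^{\prime}_{f}$. This yields a bijection between the affine $\F_{q^n}$-rational points of $\mathcal{C}_{f}$ with $y\neq 0$ and the affine $\F_{q^n}$-rational points of $\mathcal{C}^{\prime}_{f}$ with $v\neq 0$, the inverse being $(u,v)\mapsto(uv,v)$.

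Next I would translate the line condition. A point $(x,y)$ with $y\neq 0$ lies on a line $\lambda X+\mu Y=0$ with $(\lambda,\mu)\in\F_{q}^{2}\setminus\{(0,0)\}$ if and only if $\lambda\neq 0$ (the case $\lambda=0$ would force $y=0$) and $x/y=-\mu/\lambda\in\F_{q}$; equivalently, its image $(u,v)$ lies on a line $X+\mu^{\prime}=0$ with $\mu^{\prime}=-u\in\F_{q}$. Conversely, any point $(u,v)$ with $v\neq 0$ on such a line pulls back to the point $(uv,v)$ of $\mathcal{C}_f$, which lies on the $\F_{q}$-line $X+\mu^{\prime}Y=0$. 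Thus, restricted to the non-degenerate loci, the two line conditions correspond bijectively.

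It remains to handle the points where the transformation is singular. Every affine point $(x,0)$ of $\mathcal{C}_{f}$ automatically lies on the line $Y=0$ (take $\lambda=0$, $\mu=1$), so such points never obstruct the scattered property; symmetrically, every affine point $(u,0)$ of $\mathcal{C}^{\prime}_{f}$ trivially satisfies the line $Y=0$, and is therefore allowed by the new formulation. The equivalence then follows in both directions by splitting on whether $y$ (respectively $v$) is zero. I do not foresee any real obstacle: the statement is essentially change-of-coordinates bookkeeping, and the only delicate point is to check that the power $Y^{q^{I}-q}$ extracted in the definition of $\mathcal{C}^{\prime}_{f}$ is exactly the factor of $Y$ that the substitution $X\mapsto XY$ introduces, which is precisely the short computation displayed above.
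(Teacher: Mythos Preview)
Your argument is correct and follows exactly the route the paper indicates: the paper does not give a formal proof of Proposition~\ref{link with curves 2} but merely points to the quadratic transformation $(x,y)\mapsto(x/y,y)$ and records the identity $F(XY,Y)=Y^{q^{I}-q}F'(X,Y)$, and you have simply made the induced bijection on affine points and the translation of the $\F_q$-line condition explicit. Nothing further is needed.
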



We have that 

\begin{eqnarray*}
F(XY,Y)/Y^{q^I-q}&=&\dfrac{XY(Y^{q^I}+\alpha Y^{q^J})-Y(X^{q^I}Y^{q^I}+\alpha X^{q^J}Y^{q^J})}{Y^{q^I-q}(X^qY^{q+1}-XY^{q+1})} \\
&=&\dfrac{Y^{q^I+1}(X-X^{q^I})+\alpha Y^{q^J+1}(X-X^{q^J})}{Y^{q^I+1}(X^q-X)}\\
&=& \dfrac{X-X^{q^I}+\alpha Y^{q^J-q^I}(X-X^{q^J})}{X-X^q}.
\end{eqnarray*} 
Thus $\mathcal{C}^{\prime}_f$ reads 
$$ $$
\begin{equation}
    \alpha Y^{q^J-q^I}=-\dfrac{X^{q^I}-X}{X^{q^J}-X},
\end{equation}
which is equivalent, after replacing $Z=Y^{q^I},$ $\alpha \mapsto -\frac{1}{\alpha}$ and $K=J-I,$ to $Z^{q^K-1}-\alpha\frac{X^{q^I}-X}{X^{q^J}-X}=0$. In the following we will consider the  curve 
\begin{equation}\label{Eq:finale}
    \mathcal{C}^{\prime\prime}: Z^{q^K}-\alpha\dfrac{X^{q^I}-X}{X^{q^J}-X}Z=0,
\end{equation}
obtained adding the line $Z=0$ to the curve $Z^{q^K-1}-\alpha\frac{X^{q^I}-X}{X^{q^J}-X}=0$.

A general strategy at this point involves the investigation of absolutely irreducible $\mathbb{F}_{q^n}$-rational components of such a curve, together with the Hasse-Weil theorem, in order to provide a lower bound on the number of affine $\mathbb{F}_{q^n}$-rational points. This would allow, via Proposition \ref{link with curves 2}, the characterization of scattered binomials. Unfortunately, such a strategy works only when the degree of the curves (which depends linearly on $I$ and $J$) is small enough with respect to $n$ (usually roughly $\sqrt[4]{n}$).


To avoid this matter, we consider a link between the curve \eqref{Eq:finale} and a suitable variety in $\mathbb{P}^{2n-1}(\mathbb{F}_q)$ of small degree; in this direction  Lang-Weil type theorems  play a crucial role. 

Let $\left\lbrace 	\xi,\xi^q,\dots,\xi^{q^{n-1}} \right\rbrace $ be a normal basis of $\mathbb{F}_{q^n}$ over $\mathbb{F}_{q}.$ Set $X=x_0\xi + \dots +x_{n-1}\xi^{n-1}$ and $Z=y_0\xi + \dots +y_{n-1}\xi^{n-1},$ where $x_i,y_i \in \mathbb{F}_q.$

Thus, Equation \eqref{Eq:finale} reads $f_{\alpha}^{(0)}(x_0,\ldots,x_{n-1},y_0,\ldots,y_{n-1})=0,$ where $f_{\alpha}^{(0)}$ is defined as
\begin{eqnarray*}
    \left ((x_0\xi + \dots +x_{n-1}\xi^{n-1})^{q^J}-(x_0\xi + \dots +x_{n-1}\xi^{n-1})\right)(y_0\xi + \dots +y_{n-1}\xi^{n-1})^{q^K}\\
    -\alpha \left((x_0\xi + \dots +x_{n-1}\xi^{n-1})^{q^I}-(x_0\xi + \dots +x_{n-1}\xi^{n-1})\right)(y_0\xi + \dots +y_{n-1}\xi^{n-1}).
\end{eqnarray*}

Clearly, $\mathbb{F}_{q^n}$-rational points of $\mathcal{C}^{\prime\prime}$ correspond to $\mathbb{F}_q$-rational points of $f_\alpha^{(0)}(x_0,\ldots,x_{n-1},y_0,\ldots,y_{n-1})=0$ and satisfy 

$$
\begin{cases}
 f_\alpha^{(0)}(x_0,\ldots,x_{n-1},y_0,\ldots,y_{n-1})&=0\\ 
f_{\alpha^q}^{(0)}(x_1,\ldots,x_{n-1},x_0,y_1,\ldots,y_{n-1},y_0)&=0\\ 
\vdots\\
f_{\alpha^{q^{n-1}}}^{(0)}(x_{n-1},\ldots,x_{n-2},y_{n-1},\ldots,y_{n-2})&=0.
\end{cases}
$$

The above system defines an $\mathbb{F}_q$-rational variety $\mathcal{V}$ in $\mathbb{P}^{2n-1}(\mathbb{F}_q)$ of degree at most $2^n$. In order to apply Lang-Weil theorem and obtain a lower bound on the number of $\mathbb{F}_q$-rational points in $\mathcal{V}$, corresponding to $\mathbb{F}_{q^n}$-rational points in $\mathcal{C}^{\prime\prime}$ one has to prove the existence of suitable absolutely irreducible $\mathbb{F}_q$-rational components in $\mathcal{V}$.

To this end, we investigate a projectively equivalent variety $\mathcal{W}$ via the following change of variables, defined over $\mathbb{F}_{q^n}$:

\begin{eqnarray*}
x_0\xi + \dots +x_{n-1}\xi^{n-1} &\mapsto& X_0\\
(x_0\xi + \dots +x_{n-1}\xi^{n-1})^q &\mapsto& X_1\\
\vdots\\
(x_0\xi + \dots +x_{n-1}\xi^{n-1})^{q^{n-1}} &\mapsto& X_{n-1}\\
y_0\xi + \dots +y_{n-1}\xi^{n-1} &\mapsto& Y_0\\
(y_0\xi + \dots +y_{n-1}\xi^{n-1})^q &\mapsto& Y_1\\
\vdots\\
(y_0\xi + \dots +y_{n-1}\xi^{n-1})^{q^{n-1}} &\mapsto& Y_{n-1}.
\end{eqnarray*}

It is worth emphasising that $\mathcal{W}$ and $\mathcal{V}$ are projectively equivalent over $\mathbb{F}_{q^n}$ and thus the number of absolutely irreducible components together with their degrees are kept invariant by such a transformation, although the field of definition of such components may vary. Moreover, since $\mathcal{V}$ is defined over $\fq$, it follows that $\mathcal{W}$ is a variety defined over $\Fn,$ fixed by the collineation \begin{eqnarray*}
    \Psi_q :  \mathbb{P}^{2n-1}(\mathbb{F}_{q^n}) &\rightarrow& \mathbb{P}^{2n-1}(\mathbb{F}_{q^n}) \\     \left(u_0:\dots:u_{n-1}:v_0:\dots:v_{n-1}\right)&\mapsto& \left(u_{n-1}^{q^{n-1}}:\dots:u_{n-2}^{q^{n-1}}:v_{n-1}^{q^{n-1}}:\dots:v_{n-2}^{q^{n-1}}\right),
\end{eqnarray*} and 
$\fq$-rational points (resp. absolutely irreducible components) of $\mathcal{V}$ correspond to points (resp. absolutely irreducible components) of $\mathcal{W}$ which are fixed by $\Psi_q.$

By applying this projectivity, we find that the equations defining $\mathcal{W}$ are
\begin{equation*}
    \mathcal{W}:\begin{cases}
        Y_K=\alpha f_0(X)Y_0\\
        Y_{K+1}=\alpha^qf_1(X)Y_1\\
        \vdots\\
        Y_n=\alpha^{q^{n-K}}f_{n-K}(X)Y_{n-K}\\
        Y_0=\alpha^{q^{n-K+1}}f_{n-K+1}(X)Y_{n-K+1}\\
        \vdots\\
        Y_{K-1}=\alpha^{q^{n-1}}f_{n-1}(X)Y_{n-1},
    \end{cases}
\end{equation*}
where $K=J-I$, $f_i(X):=f(X_i,X_{I+i},X_{J+i})$ for $i=0,\dots,n-1$, with $f(u,v,w):=\dfrac{u-v}{u-w}$, and the indices of $X_i$ are considered modulo $n.$

\section{Structure of $\mathcal{W}$ in the case $\gcd(n,K)=1$} \label{sec struc f W}
Note that $\mathcal{W}$ contains as components the linear subspaces $\mathcal{X}:X_0=X_1=\dots=X_{n-1}$ and  $\mathcal{Y} : Y_0=Y_1=\cdots=Y_{n-1}=0,$ which are both fixed by $\Psi_q.$ Therefore, $\mathcal{W} \setminus \{\mathcal{X} \cup \mathcal{Y}\}$ is also fixed by $\Psi_q,$ and its absolute irreducibility is sufficient to ensure the existence of suitable $\Fn$-rational points on the curve $\mathcal{C}^{\prime}_{f},$ for $q$ large enough, thanks to the machinery described in Subsection \ref{sec link variety scatt}. Now we  describe  the equations defining $\mathcal{W}.$ \\
Set  $G:=\gcd(n,K)$ and $D:=\frac{n}{G}.$ The endomorphism $\gamma:t \mapsto K \cdot t$ of $(\mathbb{Z}_n,+)$  is a $(G-1)$-correspondence,  i.e. a homomorphism with kernel $\ker(\lambda)=\{0,D,2D,\dots,(G-1)D\}$ of size $G.$ Thus we can partition $\mathbb{Z}_n$ as  $\cup_{i=0}^{G-1}\{\ell K+i:\ell=0,\dots,D-1\},$ and
 we can rearrange the equations of $\mathcal{W}$ as blocks $S_i$, $i=0,\ldots,G-1$,
where 
\begin{equation}\label{eq blocchi s1...sg-1}
   S_0:\begin{cases}
    Y_K=g_0(X) Y_0\\
    Y_{2K}=g_K(X) Y_K\\
    \vdots\\
    Y_{(D-1)K}=g_{(D-2)K}(X)Y_{(D-2)K} \\
    Y_{0}=g_{(D-1)K}(X)Y_{(D-1)K}
    \end{cases} \!\!\!\!\!\!  \dots  \ \ \ S_{G-1}:
    \begin{cases}
    Y_{K+G-1}=g_{G-1}(X) Y_{G-1}\\
    Y_{2K+G-1}=g_{K+G-1}(X) Y_{K+G-1}\\
    \vdots\\
    Y_{(D-1)K+G-1}=g_{(D-2)K+G-1}(X)Y_{(D-2)K+G-1}\\
    Y_{G-1}=g_{(D-1)K+G-1}(X)Y_{(D-1)K+G-1},
    \end{cases}
\end{equation} 
and $g_i(X)=\alpha^{q^i}f_i(X),$ for $i=0\dots,G-1$ and $j=0\dots,n-1.$\\
Note that the indices of all variables are considered modulo $n$, i.e., $X_s = X_{s\! \pmod{n}}$, and $Y_t = Y_{t\! \pmod{n}}.$ 
Thus, for each $i \in {0,\dots, G-1},$ one can replace $Y_{K+i}$ in the second equation of $S_i,$ $Y_{2K+i}$ in the third equation, $\dots,$ and $Y_{(D-1)K+i}$ in the last equation of $S_i$. After all these $n-1$ substitutions, the last equation of $S_i$ becomes
\begin{equation*}
    Y_i=\text{N}_{q^n/q^G}(\alpha)^{q^i} \prod_{\ell=0}^{D-1} f_{\ell K+i}(X)Y_i,
\end{equation*}
i.e., apart from $Y_i=0$ (from which we obtain the component $\mathcal{Y}),$
\begin{equation} \label{eq sist G pol irred W - Y}
    \text{N}_{q^n/q^G}(\alpha)^{q^i} \prod_{\ell=0}^{D-1} (X_{\ell K+i}-X_{\ell K +i +I})=\prod_{\ell=0}^{D-1} (X_{\ell K+i}-X_{\ell K +i +J}).
\end{equation}
Therefore, the absolute irreducibility of $\mathcal{W} \setminus \{\mathcal{X} \cup \mathcal{Y}\}$ is strongly related to the absolute irreducibility of the variety $\mathcal{Z}$ defined by the Equations \ref{eq sist G pol irred W - Y}, for $i=0,\dots,G-1.$
\begin{prop}
\label{prop W meno x y ass irr iff Z ass irr}    The variety $\mathcal{W} \setminus \{\mathcal{X} \cup \mathcal{Y}\}$ is  absolutely irreducible  if and only if $\mathcal{Z}$ is absolutely irreducible.
\end{prop}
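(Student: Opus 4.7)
The plan is to exhibit $\mathcal{W}\setminus\{\mathcal{X}\cup\mathcal{Y}\}$ as a generically trivial torus bundle over $\mathcal{Z}$ via the projection onto the $X$-coordinates, and then invoke the standard fibration principle that a surjective morphism with irreducible fibres over an irreducible base has irreducible total space (and, conversely, that the image of an irreducible variety is irreducible). First I would consider the rational map $\pi:\mathcal{W}\setminus\{\mathcal{X}\cup\mathcal{Y}\}\to\mathbb{P}^{n-1}_X$ obtained by forgetting the $Y$-coordinates, and restrict it to the open locus $V$ on which every substitution denominator $X_{\ell K+i}-X_{\ell K+i+J}$ is non-zero and every block representative $Y_0,\ldots,Y_{G-1}$ is non-zero. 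On $V$, the block-by-block substitution leading to \eqref{eq sist G pol irred W - Y} expresses each $Y_{jK+i}$ as a non-zero rational multiple of $Y_i$, and the closing equation of each block $S_i$ collapses to $Y_i(1-\phi_i(X))=0$; since the $Y_i$'s are non-zero on $V$, this forces $\phi_i(X)=1$ for every $i=0,\ldots,G-1$, so $\pi(V)\subseteq\mathcal{Z}$.

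Conversely, over any point of the dense open $\mathcal{Z}^\circ\subseteq\mathcal{Z}$ on which the denominators do not vanish, every choice of non-zero $(Y_0,\ldots,Y_{G-1})$ lifts uniquely to a point of $V$ via the same substitutions. Hence $\pi|_V:V\to\mathcal{Z}^\circ$ is surjective with each fibre isomorphic to the absolutely irreducible torus $(\mathbb{G}_m)^G$, and the explicit substitution formulas give a Zariski-local trivialisation. By the standard fibration principle, $V$ is absolutely irreducible if and only if $\mathcal{Z}^\circ$ is, which, $\mathcal{Z}^\circ$ being open and dense in $\mathcal{Z}$, is equivalent to the absolute irreducibility of $\mathcal{Z}$.

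The remaining, and most delicate, step is to verify that $V$ is Zariski-dense in $\mathcal{W}\setminus\{\mathcal{X}\cup\mathcal{Y}\}$, so that absolute irreducibility of $V$ actually detects absolute irreducibility of the whole open variety. I would handle this by a case-by-case dimension count based on the block structure \eqref{eq blocchi s1...sg-1}: on the bad locus where some denominator $X_{\ell K+i}-X_{\ell K+i+J}$ vanishes or some block representative $Y_i$ vanishes, the corresponding block equation degenerates, and chasing the implication cyclically within the block either forces the whole orbit of $Y_j$'s to vanish\,---\,contributing, when combined with the analogous conclusions on the remaining blocks, to $\mathcal{Y}$\,---\,or propagates the coincidences $X_j=X_{j+J}$ along the orbit of $\gamma$, which under the hypothesis $\gcd(n,K)=1$ of this section is all of $\mathbb{Z}_n$, forcing $X_0=\cdots=X_{n-1}$ and hence landing in $\mathcal{X}$. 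The remaining configurations cut out strictly lower-dimensional subvarieties of $\mathcal{W}\setminus\{\mathcal{X}\cup\mathcal{Y}\}$, so $V$ is dense and the equivalence follows.
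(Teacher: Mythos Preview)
Your approach is essentially the one the paper has in mind: the paper does not give a separate proof of this proposition, but states it as the upshot of the substitution process \eqref{eq blocchi s1...sg-1}--\eqref{eq sist G pol irred W - Y}, which is exactly your torus-bundle description of the projection onto the $X$-coordinates. In that sense your proposal is consonant with, and considerably more explicit than, the paper's treatment.

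Where your write-up needs more care is precisely the step you flag as delicate, the density of $V$ in $\mathcal{W}\setminus\{\mathcal{X}\cup\mathcal{Y}\}$. Two points. First, you invoke the hypothesis $\gcd(n,K)=1$ from the section title, but the proposition is formulated for the block decomposition with arbitrary $G=\gcd(n,K)$ and is later applied in Section~\ref{sect
class bin n fino a 8} with $G\in\{2,4\}$; so the argument should not rely on the orbit of the shift-by-$K$ being all of $\mathbb{Z}_n$. Second, even in the coprime case your dichotomy is incomplete: the alternative to ``the whole orbit of $Y_j$'s vanishes'' is not only the propagation of $X_j=X_{j+J}$ along the shift-by-$K$ orbit, because at each step one could instead pick up an equality $X_j=X_{j+I}$, and neither chain of equalities alone forces $X_0=\cdots=X_{n-1}$. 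What actually saves the argument is a dimension count: each such degeneration imposes at least one extra independent linear condition on the $X$'s while freeing at most one $Y$-parameter, so the corresponding locus has dimension strictly below that of the main component over $\mathcal{Z}$; making this count precise (block by block, for general $G$) would close the gap.
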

In general, studying the absolute irreducibility of algebraic varieties of this kind is not straightforward. However, in some significant cases, it is possible to reduce everything to the study of a single polynomial, thus obtaining good characterizations of the entire starting variety. With the aim of finding infinitely many $n$ for which it is possible to fully characterize scattered binomials over $\Fn$, we focus on the  case $\gcd(n,K)=1$.

When $n$ and $K$ are coprime, the function $t\mapsto K\cdot t$ is a permutation of $\mathbb{Z}_n$ and thus we can rearrange the equations of $\mathcal{W}$ in the following way
\begin{equation}
W:\begin{cases}
    Y_K=\alpha f_0(X)Y_0\\
    Y_{2K}=\alpha^{q^K} f_K(X)Y_K\\
    Y_{3K}=\alpha^{q^{2K}} f_{2K}(X)Y_{2K}\\
    \vdots\\
    Y_{(n-2)K}=\alpha^{q^{(n-3)K}} f_{(n-3)K}(X)Y_{(n-3)K}\\
    Y_{(n-1)K}=\alpha^{q^{(n-2)K}} f_{(n-2)K}(X)Y_{(n-2)K}\\
    Y_0=\alpha^{q^{(n-1)K}} f_{(n-1)K}(X)Y_{(n-1)K},
\end{cases}
\end{equation}
where the indices are considered modulo $n$. 
Thus, combining each equation with the subsequent one obtains 
\begin{equation} \label{eq sist caso gcd 1 abs irr W iff P}
\begin{cases}
    Y_K=\alpha f_0(X)Y_0\\
    Y_{2K}=\alpha^{q^K+1} f_K(X)f_0(X)Y_0\\
    Y_{3K}=\alpha^{q^{2K}+q^K+1} f_{2K}(X)f_K(X)f_0(X)Y_0\\
    \vdots\\
    Y_{(n-2)K}=\alpha^{q^{(n-3)K}+\dots+q^K+1} f_{(n-3)K}(X)\dots f_K(X)f_0(X)Y_0\\
    Y_{(n-1)K}=\alpha^{q^{(n-2)K}+\dots+q^K+1} f_{(n-2)K}(X)\dots f_K(X)f_0(X)Y_0\\
    Y_0=\textnormal{N}_{q^n/q}(\alpha) f_{(n-1)K}(X)\dots f_K(X)f_0(X)Y_0.
\end{cases}
\end{equation}
Therefore, the variety $\mathcal{W} \setminus \{\mathcal{X} \cup \mathcal{Y}\}$ is absolutely irreducible if and only if  the  polynomial
\begin{equation}\label{eq p las of system f1f2}
    P(X):=AF_1(X)-F_2(X),
\end{equation}
  with $A=\textnormal{N}_{q^n/q}(\alpha)$, $F_1(X)=\prod_{\ell=0}^{n-1}(X_\ell-X_{I+\ell})$ and $F_2(X)=\prod_{\ell=0}^{n-1}(X_\ell-X_{J+\ell})$, is absolutely irreducible (all the indices are considered modulo $n$).

  \begin{remark} \label{rem produttore uguali iff I+J=n}
      We observe that  
$$\displaystyle\prod_{\ell=0}^{n-1}(X_\ell-X_{I+\ell})=\displaystyle\prod_{\ell=0}^{n-1}(X_\ell-X_{J+\ell}) \iff I+J=n,$$  
that is, \( f = x^{q^I}+\alpha x^{q^{n-I}} \) is an (LP)-type binomial. Moreover, for \( I+J=n \) and \( \text{N}_{q^n/q}(\alpha)\neq 1 \), the polynomial \( P(X) \) splits into \( n \) factors (and \( \mathcal{W} \setminus \{\mathcal{X} \cup \mathcal{Y}\} \) decomposes into \( n \) components), and it is the zero polynomial if and only if \( I+J=n \) and \( \text{N}_{q^n/q}(\alpha)=1 \). 

This is consistent with \cite[Theorem 3.4]{zanella2019condition}, where it is proven that \( f = x^{q^I}+\alpha x^{q^{n-I}} \) is not scattered for \( \text{N}_{q^n/q}(\alpha)=1 \).

  \end{remark}
  \subsection{Absolute irreducibility of $P(X)$}
  Firstly we note that  $n-J\neq I\neq J$ implies $\gcd(F_1,F_2)=1$. Thus we can write $F_1(X)=(X_0-X_I)(X_{n-I}-X_0)G(X_1,\dots,X_{n-1})$ and $F_2(X)=(X_0-X_J)(X_{n-J}-X_0)H(X_1,\dots,X_{n-1})$, so that 
   \begin{eqnarray} \label{eq P(x)}
      P(X)&=&A(X_0-X_I)(X_{n-I}-X_0)G-(X_0-X_J)(X_{n-J}-X_0)H  \nonumber\\ 
      &=&A(-GX_0^2+G(X_I+X_{n-I})X_0-GX_IX_{n-I})-(-HX_0^2+H(X_J+X_{n-J})X_0-HX_JX_{n-J}) \nonumber \\
      &=& X_0^2(H-AG)+X_0(AG(X_I+X_{n-I})-H(X_J+X_{n-J})) +HX_JX_{n-J}-AGX_IX_{n-I} \nonumber \\
      &=&aX_0^2+bX_0+c,   
  \end{eqnarray}
  with 
\begin{eqnarray*}
    a&=&H-AG\\
    b&=&AG(X_I+X_{n-I})-H(X_J+X_{n-J})\\
    c&=&HX_JX_{n-J}-AGX_IX_{n-I}.
\end{eqnarray*}
\begin{prop}
    The polynomials $a,b,c$ have no non-trivial factor in common.
\end{prop}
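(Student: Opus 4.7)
The plan is to argue by contradiction: suppose some irreducible polynomial $p$ divides each of $a$, $b$, $c$, and derive a contradiction.

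The first step is to exploit $p\mid a=H-AG$, which gives the congruence $H\equiv AG\pmod p$. Substituting this into $b$ and $c$ and collecting the common factor $AG$ yields
\[
b\equiv AG\,r_1\pmod p,\qquad c\equiv AG\,r_2\pmod p,
\]
where $r_1:=X_I+X_{n-I}-X_J-X_{n-J}$ and $r_2:=X_JX_{n-J}-X_IX_{n-I}$. Hence $p\mid AG\,r_1$ and $p\mid AG\,r_2$, and the argument splits according to whether $p$ divides $AG$.

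If $p\mid AG$, then since $A=\textnormal{N}_{q^n/q}(\alpha)$ is a nonzero scalar (as $\alpha\neq 0$), we conclude $p\mid G$, and then $p\mid H-AG$ forces $p\mid H$. Since $G\mid F_1$ and $H\mid F_2$, and the standing assumption $I\notin\{J,n-J\}$ yields $\gcd(F_1,F_2)=1$ (cf.\ Remark~\ref{rem produttore uguali iff I+J=n}), we obtain $\gcd(G,H)=1$, a contradiction.

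Otherwise $p\nmid AG$, so $p\mid r_1$ and $p\mid r_2$. Under $1\leq I<n/2$ and $I\notin\{J,n-J\}$ the only possible coincidence among the four indices $I$, $n-I$, $J$, $n-J$ is $J=n/2$, and in every case $r_1$ is a nonzero linear polynomial (reducing to $X_I+X_{n-I}$ in characteristic two when $J=n/2$). Being irreducible, $r_1$ forces $p$ to be a scalar multiple of it, so $r_1\mid r_2$. Substituting $X_{n-I}\equiv X_J+X_{n-J}-X_I\pmod{r_1}$ into $r_2$ produces the residue $(X_J-X_I)(X_{n-J}-X_I)$ in the non-degenerate case, and $(X_J-X_I)^2$ in the degenerate subcase $J=n/2$ (including characteristic two); under $I\neq J$ and $I\neq n-J$ both residues are nonzero, contradicting $r_1\mid r_2$.

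The expected main obstacle is verifying this last step uniformly across characteristics and across the degenerate index-coincidence $J=n/2$, since the algebra becomes slightly less symmetric there; the rest is a mechanical chain of substitutions once the congruence $H\equiv AG\pmod p$ has been exploited. The argument works precisely because the exclusion $I\notin\{J,n-J\}$ is exactly what prevents the final residue $(X_J-X_I)(X_{n-J}-X_I)$ (or its degenerate analogue) from vanishing identically.
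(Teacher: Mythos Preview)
Your proof is correct and follows essentially the same line as the paper's: both reduce to showing that a putative common divisor must divide $AG\,r_1$ and $AG\,r_2$ with $r_1=X_I+X_{n-I}-X_J-X_{n-J}$ and $r_2=X_JX_{n-J}-X_IX_{n-I}$, and then derive a contradiction from $\gcd(G,H)=1$. The only real difference is cosmetic: the paper produces the identities $b+(X_J+X_{n-J})a=AG\,r_1$ and $c-X_JX_{n-J}\,a=AG\,r_2$ as exact equalities, whereas you reach the same conclusion via the congruence $H\equiv AG\pmod p$.

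Your version is in fact more complete at one point. The paper simply asserts $\gcd(AG\,r_1,AG\,r_2)=AG$ and moves on, which tacitly uses $\gcd(r_1,r_2)=1$; you supply this verification by showing that the linear form $r_1$ cannot divide $r_2$ (the residue $(X_J-X_I)(X_{n-J}-X_I)$, respectively $(X_J-X_I)^2$ in the degenerate case $J=n/2$, being nonzero under $I\notin\{J,n-J\}$), and you handle characteristic $2$ explicitly. So your argument covers exactly the step the paper leaves implicit.
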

\begin{proof}
Note that \( \gcd(H,G) \neq 1 \) if and only if \( I = J \) or \( I = n - J \). So in our case, \( \gcd(H,G) = 1 \). Let us prove that \( \deg(\gcd(a,b,c)) = 0 \).

By contradiction, let \( d(X_1, \dots, X_{n-1}) = \gcd(a,b,c) \) with degree at least 1. Since \( d \mid a \) and \( d \mid b \), we have
$$ d \mid \left( b + (X_J + X_{n-J})a \right) = AG(X_I + X_{n-I} - X_J - X_{n-J}). $$

Analogously, since \( d \mid a \) and \( d \mid c \), we have
$$ d \mid \left( c - (X_JX_{n-J})a \right) = AG(X_JX_{n-J} - X_IX_{n-I}). $$

Thus,
$$ d \mid \gcd\left( AG(X_I + X_{n-I} - X_J - X_{n-J}), AG(X_JX_{n-J} - X_IX_{n-I}) \right) = AG. $$

This is a contradiction because from \( d \mid a \) and \( d \mid G \), we obtain \( d \mid H \), and so \( d \mid \gcd(G, H) = 1 \).

\end{proof}

In order to prove the absolute irreducibility of $P(X),$ we will make use of the concept of multiplicity of roots in a multivariate polynomial.

\begin{definition}
    Let $p(X_1,\dots,X_m)\in\overline{\mathbb{F}_q}[X_1,\dots,X_m]$ and $Q=(u_1,\dots,u_m)\in\overline{\mathbb{F}_q}^m$.
    Let 
    $$p(X_1+u_1,\dots,X_m+u_m)= p_k(X_1,\dots,X_m)+p_{k+1}(X_1,\dots,X_m)+\cdots,$$
    where each $p_i$ is either $0$ or homogeneous of degree $i$ and $p_k\neq 0$. We define the multiplicity $m_Q(p)$ of $p$ in $Q$ as $k$.
\end{definition}
We can extend the above definition to any element of $\overline{\mathbb{F}_q}(X_1,\dots,X_m)$ as follows
\begin{equation*}
    m_Q\left(\frac{p_1}{p_2}\right):=m_Q({p_1}{})-m_Q({p_2}).
\end{equation*}

\begin{prop}
    The following properties of the multiplicity hold true:
    \begin{enumerate}
        \item $m_Q(p_1p_2)=m_Q(p_1)+m_Q(p_2)$;
        \item $m_Q(p_1+p_2)\geq \min(m_Q(p_1),m_Q(p_2))$ and a sufficient condition for the equality to holds is $m_Q(p_1)\neq m_Q(p_2)$.
    \end{enumerate}
    
\end{prop}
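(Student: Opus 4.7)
The plan is to prove both properties first for polynomials (using the definition directly) and then observe that the extension to rational functions follows formally from the polynomial case. Throughout I would, without loss of generality, translate the point $Q$ to the origin by the substitution $X_i\mapsto X_i+u_i$, which reduces everything to studying expansions $p(X_1,\dots,X_m)=p_k+p_{k+1}+\cdots$ at the origin, where $m_Q(p)$ is just the smallest index $k$ for which $p_k\neq 0$.

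For property (1), I would write $p_1=p_{1,k_1}+p_{1,k_1+1}+\cdots$ and $p_2=p_{2,k_2}+p_{2,k_2+1}+\cdots$ with $p_{1,k_1},p_{2,k_2}\neq 0$, then expand the product to get
\[
p_1p_2 = p_{1,k_1}\,p_{2,k_2}+\bigl(\text{terms of degree}>k_1+k_2\bigr).
\]
The key point is that $p_{1,k_1}\,p_{2,k_2}$ is homogeneous of degree $k_1+k_2$ and is nonzero because $\overline{\mathbb{F}_q}[X_1,\dots,X_m]$ is an integral domain and the product of two nonzero homogeneous polynomials of pure degrees is again a nonzero homogeneous polynomial of the summed degree. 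Hence $m_Q(p_1p_2)=k_1+k_2$.

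For property (2), collecting the homogeneous components of $p_1+p_2$ shows that every term of degree strictly less than $\min(k_1,k_2)$ vanishes, so $m_Q(p_1+p_2)\geq \min(k_1,k_2)$. If $k_1\neq k_2$, say $k_1<k_2$, then the component of degree $k_1$ in $p_1+p_2$ equals $p_{1,k_1}\neq 0$, so the inequality becomes an equality.

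Finally I would extend both properties to rational functions via the convention $m_Q(p_1/p_2)=m_Q(p_1)-m_Q(p_2)$. Well-definedness is immediate: if $p_1/p_2=p_3/p_4$ then $p_1p_4=p_2p_3$ and property (1) gives $m_Q(p_1)+m_Q(p_4)=m_Q(p_2)+m_Q(p_3)$. Property (1) then extends directly, and for property (2) one writes $\frac{p_1}{p_2}+\frac{p_3}{p_4}=\frac{p_1p_4+p_2p_3}{p_2p_4}$ and applies the polynomial version of (2) to the numerator. I do not expect any serious obstacle here; the only point to handle carefully is the integral-domain argument in (1), which is what makes the multiplicity additive as opposed to merely super-additive.
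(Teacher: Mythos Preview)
Your argument is correct. The paper states this proposition without proof, treating it as a standard elementary fact; your write-up supplies exactly the routine verification one would expect (homogeneous expansion at the translated origin, integral-domain argument for multiplicativity, and the formal extension to rational functions via the definition $m_Q(p_1/p_2)=m_Q(p_1)-m_Q(p_2)$), and nothing is missing.
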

\begin{remark}\label{remMultPol}
    Let $p(X_1,\dots,X_m)=a(X_2,\dots,X_m)X_1^2+b(X_2,\dots,X_m)X_1+c(X_2,\dots,X_m)\in\overline{\mathbb{F}_q}[X_1,\dots,X_m]$.
    
    If $q$ is odd then $p$ has a factor of the type $(a_1(X_2,\dots,X_m)X_1+b_1(X_2,\dots,X_m))$ if and only if $\Delta:=b^2-4ac$ is a square in $\overline{\mathbb{F}_q}[X_2,\dots,X_m]$.
Obviously if there exists $Q\in\overline{\mathbb{F}_q}^{m-1}$ such that $m_Q(\Delta)$ is odd, then $\Delta$ is not a square.

    If $q$ is even then $p$ has a factor of the type $(a_1(X_2,\dots,X_m)X_1+b_1(X_2,\dots,X_m))$ if and only if there exists $\omega\in\overline{\mathbb{F}_q}(X_2,\dots,X_m)$ such that $$\frac{ac}{b^2} = \omega^2+\omega.$$ 
    If there exists $Q\in\overline{\mathbb{F}_q}^{m-1}$ such that $m_Q(ac)$ is odd, and $2m_Q(b)>m_Q(ac)$ then this $\omega$ cannot exists, since we will have
$$0>m_Q(ac)-2m_Q(b)=m_Q(\omega^2+\omega)=m_Q(\omega^2)=2m_Q(\omega)$$ and this is a contradiction.
So for both $q$ odd and $q$ even, a sufficient condition for not having a factor of the type $(a_1(X_2,\dots,X_m)X_1+b_1(X_2,\dots,X_m))$ is the existence of $Q\in\overline{\mathbb{F}_q}^{m-1}$ such that $m_Q(ac)$ is odd, and $2m_Q(b)>m_Q(ac)$.
\end{remark}

\begin{remark} \label{rem irr se e solo se polo disp o delta no square}
Let $P(X)$ defined as in Equation \eqref{eq P(x)}. 
   By Remark \ref{remMultPol}, to prove the irreducibility of $P(X)$ it is sufficient to prove 
the existence of an $(n-1)$-tuple $Q=(u_1,\dots,u_{n-1})\in \overline{\mathbb{F}_q}^{n-1}$ such that $m_Q(ac)$ is odd, and $2m_Q(b)>m_Q(ac)$.
\end{remark}

In what follows, we will consider the lower indices of the variables modulo \(n\).
\begin{theorem} \label{prop P abs irr q odd}
    Let $n\geq5$. Then there exist an $(n-1)$-tuple $Q=(u_1,\dots,u_{n-1})\in \overline{\mathbb{F}_q}^{n-1}$ which is a zero of odd multiplicity $m_1$ for the polynomial $ac$ and a zero of multiplicity $m_2$ for $b,$ with $2m_2>m_1$. In particular $P(X)$ is absolutely irreducible.
\end{theorem}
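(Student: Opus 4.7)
The plan is to invoke the criterion of Remarks~\ref{remMultPol} and~\ref{rem irr se e solo se polo disp o delta no square}: since $\gcd(a,b,c)=1$ by the preceding proposition, it suffices to exhibit $Q=(u_1,\ldots,u_{n-1})\in\overline{\mathbb{F}_q}^{n-1}$ at which $ac$ vanishes to odd order $m_1$ and $b$ vanishes to order $m_2$ with $2m_2>m_1$. I will target the simplest configuration $m_Q(a)=1$, $m_Q(c)=0$, $m_Q(b)\geq 1$, which gives $m_1=1$ and $m_2\geq 1$, so $2m_2\geq 2>1=m_1$.

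\textbf{Key identity.} The starting observation is that, substituting $H\equiv AG$ modulo $a=H-AG$ into the defining expressions for $b$ and $c$, one obtains
\begin{align*}
b &\equiv AG\cdot\sigma_1\pmod{a}, \qquad \sigma_1:=X_I+X_{n-I}-X_J-X_{n-J},\\
c &\equiv AG\cdot\sigma_2\pmod{a}, \qquad \sigma_2:=X_JX_{n-J}-X_IX_{n-I}.
\end{align*}
Hence on the open locus $V(a)\setminus V(AG)$, the vanishing of $b$ is controlled by $\sigma_1$ and that of $c$ by $\sigma_2$. Consider then the subvariety $\mathcal{S}:=V(a)\cap V(\sigma_1)\subset\overline{\mathbb{F}_q}^{n-1}$. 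Since $\deg a=n-2$ and $\sigma_1$ is linear, Krull's principal ideal theorem gives $\dim\mathcal{S}\geq n-3\geq 2$ for $n\geq 5$. Any $Q\in\mathcal{S}$ satisfying the three open conditions $AG(Q)\neq 0$, $\sigma_2(Q)\neq 0$, and $Q$ a smooth point of $V(a)$ has the desired form, since then $b(Q)=0$, $c(Q)\neq 0$, and $m_Q(a)=1$.

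The existence of such a $Q$ reduces to three non-containments: $\mathcal{S}\not\subset V(AG)$, $\mathcal{S}\not\subset V(\sigma_2)$, and $\mathcal{S}\not\subset\mathrm{Sing}(V(a))$. The first follows by a dimension comparison: containment of $\mathcal{S}$ in $V(G)$ would force $H=AG=0$ on $\mathcal{S}$, whence $\mathcal{S}\subset V(H)\cap V(G)$, a variety of dimension at most $n-4$, contradicting $\dim\mathcal{S}\geq n-3$.

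The main obstacle is verifying the remaining two non-containments uniformly across all admissible triples $(n,I,J)$ with $n\geq 5$, $1\leq I<n/2$, $I\neq J$, and $I+J\neq n$. The natural strategy is an explicit parametric construction: fixing $X_\ell$ to generic scalars (say $1$) for most indices reduces $a=0$ and $\sigma_1=0$ to a system in a few remaining unknowns whose solutions can be written down, and along the resulting family one reads off $\sigma_2$ and the Jacobian of $a$ to confirm that $\sigma_2$ is not identically zero and that $V(a)$ is generically smooth. Particular care will be required in the degenerate arithmetic situations, most notably $J=n/2$ (where $X_{n-J}=X_J$ and $H$ becomes a square up to sign) and $I+2J\equiv 0\pmod n$ (where $H$ acquires the extra linear factor $X_I-X_{n-J}$, making $H$ vanish on part of $\mathcal{S}$ one would otherwise wish to exploit). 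In each such case an analogous but case-specific choice of $Q$ should suffice, possibly by permuting the roles of the pairs $(I,n-I)$ and $(J,n-J)$ or by working on $V(c)$ rather than $V(a)$.
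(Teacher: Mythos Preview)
Your key identities $b=AG\sigma_1-(X_J+X_{n-J})a$ and $c=AG\sigma_2+X_JX_{n-J}\,a$ are correct and give a clean way to read off $b$ and $c$ on $V(a)$. However, the proposal is a plan rather than a proof, and the plan contains a concrete error together with an explicitly acknowledged gap.

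The error is in your dimension argument for $\mathcal{S}\not\subset V(G)$. In the ambient space $\overline{\mathbb{F}_q}^{\,n-1}$ the intersection $V(G)\cap V(H)$ has dimension $n-3$, not $n-4$: two hypersurfaces with no common component meet in codimension $2$. Since $\dim\mathcal{S}\geq n-3$ as well, the containment $\mathcal{S}\subset V(G)\cap V(H)$ is \emph{not} excluded by dimension alone. One can try to salvage the step by also intersecting with $V(\sigma_1)$, but then you must check that $\sigma_1$ does not vanish identically on any linear component $V(X_i-X_{i+I})\cap V(X_j-X_{j+J})$ of $V(G)\cap V(H)$; this is a combinatorial verification in $(I,J,n)$ that you have not carried out. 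More seriously, you explicitly leave the two ``main obstacle'' non-containments---avoiding $V(\sigma_2)$ and $\mathrm{Sing}(V(a))$---as work to be done, together with the degenerate configurations $J=n/2$ and $I+2J\equiv0$. A proof that defers its main technical content is not yet a proof.

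The paper sidesteps all of this by a direct construction dual to your target: instead of forcing $m_Q(a)=1$, it forces $m_Q(c)=1$ with $a(Q)\neq 0$ and $b(Q)=0$. Concretely, it sets $u_J=u_{n-J}=0$ and $u_\ell=u_{\ell+I}$ for one suitable index $\ell$ (the remaining coordinates being pairwise distinct normal-basis elements). This kills exactly one linear factor of $G$ and makes $X_J+X_{n-J}$ vanish; then $b=AG(X_I+X_{n-I})-H(X_J+X_{n-J})$ vanishes, $a=H-AG$ equals $H(Q)\neq 0$, and $c=HX_JX_{n-J}-AGX_IX_{n-I}$ vanishes to order exactly $1$ (the first summand to order $2$, the second to order $1$). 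The case analysis---three divisibility cases for $n\geq 7$, plus $n=5,6$ handled by naming a specific $\ell$---is short and completely explicit, precisely the ``case-specific choice of $Q$'' you anticipated but did not write down. Your congruence framework is a good way to \emph{motivate} such a construction; it does not replace it.
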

\begin{proof}

Firstly let us consider $n\geq 7$.

We distinguish the following cases.

-\textbf{Case 1.}  $X_J-X_{n-J} \nmid H,\,X_J-X_{n-J} \nmid G$

Note that  $n\geq 7$ yields the existence of $\ell\in\{1,\dots,n-1\}$ such that $\ell,(\ell+I \mod n) \notin\{0,J,n-J\}.$ Consider $u_{J}=u_{n-J}=0,$ $u_\ell=\xi^{q^\ell}=u_{\ell+I}$, and $u_j=\xi^{q^j}$ for each $j \in \{1,\dots,n-1\} \setminus \{J,n-J,\ell,(\ell+I \mod n)\}.$ It follows that $a(Q)\neq 0,$ $b(Q)=0$, and $Q$ is a simple zero of $c,$ so $Q$ has the required properties.

-\textbf{Case 2.}  $X_J-X_{n-J} \nmid H,\,X_J-X_{n-J} \mid G$

 Consider $u_{J}=u_{n-J}=0,$ and $u_j=\xi^{q^j}$ for each $j \in \{1,\dots,n-1\} \setminus \{J,n-J\}.$ It follows that $a(Q)\neq 0,$ $b(Q)=0$, and $Q$ is a simple zero of $c,$ so $Q$ has the required properties.

-\textbf{Case 3.}  $X_J-X_{n-J} \mid H$

Note that $X_J-X_{n-J} \mid H$ implies $J=\frac{n}{3}$,  $X_I-X_{n-I} \nmid H$, and $X_I-X_{n-I} \nmid G$ since both the last two items of them would imply $I=\frac{n}{3}$, a contradiction. The condition $n\geq 7$ yields the existence of $\ell\in\{1,\dots,n-1\}$ such that $\ell,(\ell+J \mod n) \notin\{0,I,n-I\}$. Consider $u_{I}=u_{n-I}=0,$ $u_\ell=\xi^{q^\ell}=u_{\ell+J}$, and $u_j=\xi^{q^j}$ for each $j \in \{1,\dots,n-1\} \setminus \{I,n-I,\ell,(\ell+J \mod n)\}.$ It follows that $a(Q)\neq 0,$ $b(Q)=0$, and $Q$ is a simple zero of $c,$ so $Q$ has the required properties.

\vspace{0.4cm}

If $n=5$  we have to consider the cases $(I,J)=(1,2),(1,3),(2,3)$.

-$(I,J)=(1,2),(2,3)$

In these cases $X_J-X_{n-J}\mid G$, and we can proceed as in \textbf{Case 2}. 

-$(I,J)=(2,3)$

Note that in this case we can choose $\ell=4$, and obtain that $\ell,(\ell+I \mod n) \notin\{0,J,n-J\}.$ Then we can proceed as in \textbf{Case 1}.

\vspace{0.4cm}

When $n=6$ the cases to be considered are $(I,J)=(1,2),(2,3)$.

-$(I,J)=(1,2)$

In this case $X_J-X_{n-J}\mid H$, and we can choose $\ell=1$ to obtain that $\ell,(\ell+J \mod n) \notin\{0,I,n-I\}.$ Then we can proceed as in \textbf{Case 3}.

-$(I,J)=(2,3)$

Note that in this case we can choose $\ell=2$, and obtain that $\ell,(\ell+I \mod n) \notin\{0,J,n-J\}.$ Then we can proceed as in \textbf{Case 1}.
\end{proof}

\subsection{Main result}
 Now we are in position to prove our main result.
\begin{theorem} \label{main theorem}
       Let $n\geq 5,$ $\gcd(J-I,n)=1,$  $q \geq2^{\frac{13}{3}n+4}$ and $\alpha \in \mathbb{F}_{q^n}\!\setminus \! \{0\}.$  Then the  binominal
    $x^{q^I}+\alpha x^{q^J}$ is scattered on $\mathbb{F}_{q^n}$ if and only if  $I+J=n$ and $\textnormal{N}_{q^n / q}(\alpha) \neq 1.$   
   \end{theorem}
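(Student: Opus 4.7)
The ``if'' direction is already in the literature: for $I+J=n$ and $\mathrm{N}_{q^n/q}(\alpha)\neq 1$, the binomial $x^{q^I}+\alpha x^{q^J}$ is of (LP) shape (item (b) of the introduction), which is known to be scattered. I therefore concentrate on the contrapositive of ``only if'', splitting it into two sub-cases: (i) $I+J=n$ with $\mathrm{N}_{q^n/q}(\alpha)=1$, handled immediately by Remark~\ref{rem produttore uguali iff I+J=n} via \cite[Theorem 3.4]{zanella2019condition}; and (ii) $I+J\neq n$, which is the substantive case.

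\textbf{Reduction to a rational-point statement.} In case (ii), Proposition~\ref{link with curves 2} says that failing to be scattered is equivalent to exhibiting an affine $\mathbb{F}_{q^n}$-rational point of the curve $\mathcal{C}''$ lying on neither a line $X+\mu=0$ with $\mu\in\mathbb{F}_q$ nor the line $Z=0$. Through the normal-basis expansion of Subsection~\ref{sec link variety scatt}, such a point corresponds to an $\mathbb{F}_q$-rational affine point of $\mathcal{V}\subset\mathbb{P}^{2n-1}(\mathbb{F}_q)$ lying off the distinguished linear components $\mathcal{X}$ and $\mathcal{Y}$, equivalently a $\Psi_q$-fixed $\mathbb{F}_{q^n}$-rational point of $\mathcal{W}\setminus(\mathcal{X}\cup\mathcal{Y})$.

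\textbf{Irreducibility plus Lang--Weil.} The hypothesis $\gcd(J-I,n)=1$ places us in the setting of Equation~\eqref{eq sist caso gcd 1 abs irr W iff P}, where the absolute irreducibility of $\mathcal{W}\setminus(\mathcal{X}\cup\mathcal{Y})$ reduces to that of the single polynomial $P(X)=AF_1(X)-F_2(X)$. Since $I+J\neq n$, Remark~\ref{rem produttore uguali iff I+J=n} ensures $P\not\equiv 0$, and Theorem~\ref{prop P abs irr q odd} then delivers its absolute irreducibility. Hence $\mathcal{W}\setminus(\mathcal{X}\cup\mathcal{Y})$ is an absolutely irreducible variety of dimension $n-1$ and degree at most $2^n$; being $\Psi_q$-invariant, it descends by Galois descent to an absolutely irreducible $\mathbb{F}_q$-component of $\mathcal{V}$. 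Applying Theorem~\ref{thm lang weil versione tredici terzi} under $q\geq 2^{13n/3+4}$ produces a lower bound of order $q^{n-1}$ on the affine $\mathbb{F}_q$-rational points of this component, whereas the ``good'' locus $\mathcal{X}\cup\mathcal{Y}$ sits in strictly smaller dimension and contributes only $O(q^{n-2})$ points. A genuine $\mathbb{F}_{q^n}$-rational affine point of $\mathcal{C}''$ outside the forbidden lines therefore exists, contradicting the scattered property.

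\textbf{Main obstacle.} The Lang--Weil accounting itself is routine once the degree bound $2^n$ is in hand: the threshold $q\geq 2^{13n/3+4}$ is calibrated exactly to absorb the dominant error term $5\delta^{13/3}q^{n-2}$. The more delicate aspects are the Galois-descent step, where one must verify that the $\Psi_q$-invariant absolutely irreducible component over $\overline{\mathbb{F}_q}$ really descends to a single absolutely irreducible $\mathbb{F}_q$-component of $\mathcal{V}$ (so that its $\mathbb{F}_q$-point count is governed by Lang--Weil rather than being scattered among Galois conjugates), and the dimension bookkeeping needed to show that $\mathcal{X}\cup\mathcal{Y}$ only soaks up a negligible fraction of the points produced.
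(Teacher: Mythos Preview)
Your proposal follows the paper's own argument step for step: reduce to absolute irreducibility of $P(X)$ via Equation~\eqref{eq sist caso gcd 1 abs irr W iff P}, invoke Theorem~\ref{prop P abs irr q odd}, pass back through the $\Psi_q$-invariance to an $\mathbb{F}_q$-rational absolutely irreducible component of $\mathcal{V}$, and apply Lang--Weil. So the strategy is identical.

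There are two quantitative wrinkles worth tightening. First, your stated dimension $n-1$ (and consequent main term $q^{n-1}$) does not match the paper's use of $q^n$; whether one reads this projectively in $\mathbb{P}^{2n-1}$ or affinely in $\mathbb{A}^{2n}$, the numerics still close under $q\ge 2^{13n/3+4}$, but you should be explicit about which you mean. Second, your treatment of the exclusion of $\mathcal{X}\cup\mathcal{Y}$ is imprecise: the sentence ``$\mathcal{X}\cup\mathcal{Y}$ sits in strictly smaller dimension'' is false as written, since $\mathcal{X}$ (all $X_i$ equal, $Y_i$ free) has dimension at least that of $\mathcal{W}'$. What you need is that the \emph{intersection} of the irreducible component $\mathcal{W}'$ with $\mathcal{X}\cup\mathcal{Y}$ is a proper closed subvariety of $\mathcal{W}'$ and hence of strictly smaller dimension; only then does the $O(q^{n-2})$-type bound follow (and even then one must carry the degree factor). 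The paper sidesteps this by using a Bezout-type argument: the component has degree at most $2^n$, so its intersection with each of the linear spaces $\overline{\mathcal{X}}$, $\overline{\mathcal{Y}}$ contributes at most $2^n$ rational points, for a total of $2^{n+1}$, which is then beaten by the Lang--Weil lower bound. Either route can be made rigorous, but the paper's explicit constant $2^{n+1}$ is the cleaner way to finish.
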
  
\begin{proof}
 Consider $P(X_0,\dots,X_n)$ as in Equation \eqref{eq p las of system f1f2}. From Theorem \ref{prop P abs irr q odd}  we deduce the absolute irreducibility of $P(X_0,\dots,X_n).$ Then Equation \eqref{eq sist caso gcd 1 abs irr W iff P} implies that also $\mathcal{W}\setminus \{\mathcal{X}\cup \mathcal{Y}\}$ is absolutely irreducible, and it corresponds to an $\fq$-rational absolutely irreducible component of the variety $\mathcal{V},$ introduced in Section \ref{sec link variety scatt}, not contained in the linear subspaces $\overline{\mathcal{X}}:x_0=x_1=\dots=x_{n-1}$ and $\overline{\mathcal{Y}}:y_0=y_1=\dots=y_{n-1}=0.$ Moreover, from $\deg(\mathcal{V})\leq 2^n,$ it follows that such a component intersects $\overline{\mathcal{X}}\cup \overline{\mathcal{Y}}$ in at most $2^n+2^n=2^{n+1}$ $\fq$-rational points.  Theorem \ref{thm lang weil versione tredici terzi} yields \begin{equation*}
   \#(\mathcal{V} (\mathbb{F}_q)\cap \mathbb{A}^{2n-1}(\fq)) \geq q^n
 -(2^n-1)(2^n-2)q^{n-1/2} - 5\cdot 2^{13n/3}q^{n-1} > 2^{n+1},
\end{equation*}
when $q \geq 2^{\frac{13}{3}n+4}.$ Therefore we deduce the existence of $\fq$-rational points on $\mathcal{V}$ off $\overline{\mathcal{X}}\cup \overline{\mathcal{Y}}$ which correspond to $\Fn$-rational points $Q=(x,y)$ of $\mathcal{C}^{\prime}_{f}$ with $x \notin \fq$ and $y \neq 0.$ The statement follows from Proposition \ref{link with curves 2}. 
\end{proof}
\begin{cor} \label{cor solo bin LP}
           Let $n\geq 5$ be a prime number and $q\geq 2^{\frac{13}{3}n+4}$ be a prime power. Then the only scattered binomials over $\Fn$ are of (LP)-type. 
       \end{cor}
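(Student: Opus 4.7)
The proof will be a direct application of Theorem \ref{main theorem}, with primality of $n$ used to discharge the coprimality hypothesis automatically. The plan is as follows.

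First, given a scattered binomial $f \in \mathcal{L}_{n,q}$, I would apply the normalizations of Subsection 2.2 to bring $f$ into the canonical form $f(x) = x^{q^I} + \alpha x^{q^J}$ with $1 \leq I < J \leq n-1$ and $\alpha \in \F_{q^n}^*$. Scaling by an element of $\F_{q^n}^*$ fixes the first coefficient to be $1$, and the $\Gamma L$-equivalence together with the adjoint reduction (Theorem \ref{thm equivalenza aggiunto}) allows us to reorder and restrict the exponents as in Subsection 2.2. The case $I = J$ or $\alpha = 0$ is excluded since $f$ would then be a monomial rather than a true binomial.

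Second — and this is the heart of the reduction — primality of $n$ forces $\gcd(J-I, n) = 1$ automatically, because $1 \leq J - I \leq n-1$ leaves no nontrivial common divisor with $n$. Hence the hypotheses of Theorem \ref{main theorem} are satisfied for every such normalized binomial. The theorem then asserts that $f$ is scattered if and only if $I + J = n$ and $\textnormal{N}_{q^n/q}(\alpha) \neq 1$. Since $f$ is assumed scattered, we obtain $J = n - I$, and therefore
\[
f(x) = x^{q^I} + \alpha\, x^{q^{n-I}},
\]
which is exactly a binomial of (LP) type with $s = I$ and $\delta = \alpha$. The remaining (LP) side conditions $\gcd(s,n) = 1$ and $\textnormal{N}_{q^n/q}(\alpha) \notin \{0,1\}$ hold automatically: primality of $n$ yields the first, and $\alpha \neq 0$ combined with the norm condition from Theorem \ref{main theorem} yields the second.

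I do not anticipate any substantive obstacle — the corollary is a short arithmetic consequence of Theorem \ref{main theorem}, whose role is to have absorbed all the hard analysis (irreducibility of $P(X)$ via Theorem \ref{prop P abs irr q odd} and the Lang--Weil estimate). The only care point is to confirm that the normalization $1 \leq I < J \leq n-1$ captures all scattered binomial classes of interest; any ostensibly excluded case such as $I = 0$ is $\Gamma L$-equivalent, via an explicit element of $\mathrm{GL}(2,q^n)$ acting on the graph, to a monomial of type (a) and so does not contribute a genuinely binomial equivalence class.
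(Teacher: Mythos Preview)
Your proposal is correct and follows exactly the intended approach: the paper states Corollary \ref{cor solo bin LP} immediately after Theorem \ref{main theorem} without an explicit proof, because primality of $n$ forces $\gcd(J-I,n)=1$ for every $1\le I<J\le n-1$, so Theorem \ref{main theorem} applies to every normalized binomial and yields the (LP) form. Your handling of the normalizations and the residual side conditions ($\gcd(I,n)=1$, $\textnormal{N}_{q^n/q}(\alpha)\notin\{0,1\}$) is accurate and matches the paper's conventions from Subsection~2.2.
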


\section{Classification of scattered binomials for $n \leq 8$} \label{sect
class bin n fino a 8}
The aim of this section is to provide a complete characterization of scattered binomials over $\F_{q^n}$ for $n \leq 8$. For $n\in\{3,4\}$, according to 
{\cite[Section 4]{Csajbokmarinopolverino2018classesofequivalence}} 
and \cite[Theorem 3.4]{csajbokzanella2018PG1q4}, we already know that a scattered polynomial is $\gammal$-equivalent to either a pseudoregulus type polynomial or an (LP)-type polynomial for $n=4.$ Furthermore, from Corollary \ref{cor solo bin LP}, we can infer that over $\F_{q^n}$ there are no scattered binomials but only (LP) polynomials for $n\in\{5,7\}.$ Therefore, taking into account \cite[Theorem 1.1]{BartoliCsajbokMontanucci2021} and \cite[Theorem 1.1]{TimpanellaZini2024}, the pending cases to be considered are $f(x)=x^{q^I}+\alpha x^{q^J}$ over $\F_{q^n},$ where
\begin{itemize}
    \item $n=6$ and $G=\gcd(n,J-I)=2;$
    \item $n=8$ and $G=\gcd(n,J-I)=2;$
    \item $q$ even, $n=8,$ and $G=\gcd(n,J-I)=4.$
\end{itemize}
We will discuss each case separately, by means of deep analysis of the variety \begin{equation} \label{eq Z per gcd >1}\mathcal{Z}: \text{N}_{q^n/q^G}(\alpha)^{q^i} \prod_{\ell=0}^{D-1} (X_{\ell K+i}-X_{\ell K +i +I})=\prod_{\ell=0}^{D-1} (X_{\ell K+i}-X_{\ell K +i +J}) \ \ \ \ \ \ i=0,\dots,G-1,\end{equation} introduced in Section \ref{sec struc f W}; see Equation \eqref{eq sist G pol irred W - Y}.
\subsection{Case $n=6,$ $\gcd(n,J-I)=2$}\label{n6G2} 
By Theorem \ref{thm equivalenza aggiunto}  we can assume without restriction $I=1$ and $J=3.$\\
By Equations  \eqref{eq blocchi s1...sg-1}, \eqref{eq sist G pol irred W - Y},  $\mathcal{Z}$ reads 
\begin{equation}\label{sist sole x caso gcd 2}
    \mathcal{Z}:\begin{cases}
    (X_0-X_1)(X_2-X_3)(X_4-X_5)-\beta(X_0-X_3)(X_2-X_5)(X_4-X_1)=0 \\
    (X_1-X_2)(X_3-X_4)(X_5-X_0)-\beta^q(X_1-X_4)(X_3-X_0)(X_5-X_2)=0,
\end{cases}
\end{equation}
with $\beta=\alpha^{1+q^2+q^4}\in\F_{q^2}\setminus \{0\}.$
Solving for \( X_0 \) from the first equation of System~(\ref{sist sole x caso gcd 2}) and substituting it into the next equation, we obtain
\begin{equation}\label{eq pol secondo grado gcd 2 n 6}
    aX_1^2+bX_1+c=0,
\end{equation} where \begin{eqnarray}
    a&=&(1-\beta)X_3^2X_5 + (\beta+\beta^q-1)X_3X_4X_5 + (\beta-\beta^q)X_3X_5^2 - \beta X_4X_5^2   - X_3^2X_4  + X_3X_4^2 \nonumber\\&&+[\beta X_3^2 - (\beta+\beta^q-1)X_3X_4  + (\beta-\beta^q+1)(X_4-X_3)X_5    + \beta^qX_5^2   - X_4^2 ] \cdot X_2 \nonumber\\&&+ \beta^q(X_4-X_5)\cdot X_2^2\nonumber\\
    &=&a_0+a_1X_2+a_2X_2^2;
    \\b&=&X_5[(\beta-\beta^q+1)(X_3-X_5)X_3X_4 - (\beta+\beta^q+1)X_3X_4^2  + \beta X_4^2X_5  + (\beta^q-1)X_3^2X_5    ] \nonumber\\&&+(X_5-X_4)[(\beta-\beta^q-1)(X_3^2+X_4X_5) - (\beta+\beta^q-1)(X_4 +X_5)X_3 ]\cdot X_2 \nonumber \\&&
     +[(\beta-\beta^q-1)(X_3-X_5)X_4 + (\beta+\beta^q+1)X_3X_5    - (\beta^q-1)X_4^2-\beta X_3^2] \cdot X_2^2\nonumber\\
    &=&b_0+b_1X_2+b_2X_2^2;\\
     c&=&\beta^qX_3^2X_4X_5(X_4 - X_5)+[(\beta-\beta^q+1)(X_4-X_3)X_3X_4X_5 + (\beta+\beta^q-1)X_3X_4X_5^2\nonumber\\&& - \beta X_4^2X_5^2 - \beta^qX_3^2X_4^2 +X_3^2X_5^2]\cdot X_2  +[ \beta X_3^2X_4 - (\beta-\beta^q)X_3X_4^2 \nonumber\\&& -( \beta+\beta^q-1)X_3X_4X_5 + \beta X_4^2X_5    - X_3X_5^2 - X_4^2X_5 + X_4X_5^2]\cdot X_2^2 \nonumber\\
    &=&c_0+c_1X_2+c_2X_2^2.  
\end{eqnarray}

\begin{prop}\label{prop_coprime}
   The polynomials  $a,b,c$  have no non-constant common factors.
\end{prop}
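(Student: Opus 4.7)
The plan is to argue by contradiction. Suppose a non-constant common divisor $d$ of $a,b,c$ exists in $\overline{\mathbb{F}_q}[X_2,X_3,X_4,X_5]$. Working in $R[X_2]$ with $R=\overline{\mathbb{F}_q}[X_3,X_4,X_5]$ and invoking Gauss's lemma, I split any such common factor into its $R$-content and its primitive $X_2$-polynomial part, and exclude each separately.

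For the content, observe that the $X_2$-leading coefficient $a_2=\beta^q(X_4-X_5)$ forces every common $R$-factor of $a,b,c$ to divide $(X_4-X_5)$ up to units. Specializing $X_4=X_5=t$, a direct substitution collapses $a_0$ to $-\beta t(X_3-t)^2$, which is nonzero. Hence $(X_4-X_5)\nmid a_0$, so no non-trivial content divides $a$ (let alone all of $a,b,c$).

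For the primitive part, suppose $\tilde d\in R[X_2]$ is primitive of positive $X_2$-degree and divides $a,b,c$. Its $X_2$-leading coefficient $L\in R$ divides the leading coefficient of each; in particular $L\mid a_2$, so $L$ is a unit or $(X_4-X_5)$ up to units. I then compute $b_2|_{X_4=X_5=t}=-\beta X_3^2+2\beta X_3 t+(2-\beta)t^2$, which is nonzero, so $(X_4-X_5)\nmid b_2$ and $L$ must be a unit. Consequently $\tilde d|_{X_4=X_5=t}$ retains its positive $X_2$-degree, and the explicit factorizations
\[
a|_{X_4=X_5=t}=\beta(X_3-t)^2(X_2-t),\qquad c|_{X_4=X_5=t}=\beta t(X_3-t)^2 X_2(X_2-t)
\]
force $(X_2-t)$ to be a factor of $\tilde d|_{X_4=X_5=t}$. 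But evaluating $b|_{X_4=X_5=t}$ at $X_2=t$ yields $2t^4$, which is nonzero in odd characteristic and contradicts $\tilde d\mid b$.

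The main obstacle will be handling characteristic $2$ uniformly, since the value $2t^4$ vanishes there. In that case, I plan to use an alternative specialization such as $X_3=X_4=s$, where an analogous direct computation gives $a|_{X_3=X_4=s}=\beta^q(s-X_5)(X_2-s)(X_2-X_5)$ and a parallel factorization for $c$, while a short check shows that $b|_{X_3=X_4=s}$ shares none of the linear factors $(X_2-s)$, $(X_2-X_5)$, $(s-X_5)$. The proof strategy is otherwise identical: the content is excluded by divisibility of leading coefficients plus a single non-vanishing substitution, and the primitive part is excluded by forcing a linear $X_2$-factor and evaluating $b$ at the putative root. The tedious part is just bookkeeping the substitutions accurately.
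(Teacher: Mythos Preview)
Your overall plan is sound for odd characteristic: the content argument is correct, the reduction to $\deg_{X_2} d=1$ with unit leading coefficient is correct, and the specialization $X_4=X_5=t$ indeed gives $a\mapsto\beta(X_3-t)^2(X_2-t)$ and $c\mapsto\beta t(X_3-t)^2X_2(X_2-t)$. One small slip: evaluating $b$ at $X_2=X_4=X_5=t$ gives $2t^3(t-X_3)$, not $2t^4$; this does not matter for odd $q$ since the value is still nonzero.

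The genuine gap is in even characteristic. Your proposed fix, specializing $X_3=X_4=s$, does not work: in characteristic $2$ one has $b|_{X_3=X_4=s}\equiv 0$ identically (each of $b_0,b_1,b_2$ becomes a multiple of $2$ after the substitution). Hence $b|_{X_3=X_4=s}$ is trivially divisible by every candidate factor, and no contradiction is reached. In fact the first specialization already shows the difficulty: in characteristic $2$,
\[
a|_{X_4=X_5=t}=\beta(X_3+t)^2(X_2+t),\qquad b|_{X_4=X_5=t}=\beta(X_3+t)^2(X_2+t)^2,\qquad c|_{X_4=X_5=t}=\beta t(X_3+t)^2X_2(X_2+t),
\]
so $(X_2+t)$ divides all three images and the substitution method cannot separate them.

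The paper avoids this characteristic dependence by an elimination argument rather than a specialization: one forms $F_{a,b}=a_2b-b_2a$ and $F_{a,c}=a_2c-c_2a$, both linear in $X_2$, and then eliminates $X_2$ between them to obtain a nonzero element independent of $X_2$ that any common divisor $d$ must divide. This forces $d\in\overline{\mathbb F_q}[X_3,X_4,X_5]$, whence $d\mid a_2=\beta^q(X_4-X_5)$; but $(X_4-X_5)\nmid a$, a contradiction. That argument is uniform in the characteristic and bypasses the collapse you encounter at $p=2$.
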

\begin{proof}
    
    By contradiction let $d(X_2,X_3,X_4,X_5):=\gcd(a,b,c)$ be of degree at least $1$. Then $d\mid F_{a,b}:=a_2b-b_2a$ and $d\mid F_{a,c}:=a_2c-c_2a$.
    We have
    $$F_{a,b}=s_0(X_3,X_4,X_5)+s_1(X_3,X_4,X_5)X_2,\,\,F_{a,c}=t_0(X_3,X_4,X_5)+t_1(X_3,X_4,X_5)X_2,$$
    so $d\mid s_1F_{a,c}-t_1F_{a,b}\not\equiv0$. This means that $d$ does not depend on $X_2$, and  $d\mid a$ yields $d\mid a_2$, so we can assume $d=X_4-X_5$ up to a scalar multiple. It is obvious that $(X_4-X_5)\nmid a$, so we have a contradiction. 
\end{proof}

By Remark \ref{rem irr se e solo se polo disp o delta no square}, to obtain the irreducibility of $aX_1^2+bX_1+c$ in Equation \eqref{eq pol secondo grado gcd 2 n 6}, it is sufficient to provide the existence of a $4$-tuple $Q=(u_2,u_3,u_4,u_5)\in \overline{\mathbb{F}_q}^{4}$ which is a zero of odd multiplicity $m_1$ for the polynomial $ac$ and a zero of multiplicity $m_2$ for the polynomial $b$ such that $2m_2>m_1$. 
\begin{theorem} \label{thm punto ad ok gcd=2 n 6}
    The $4$-tuple $Q=(0,\xi,\xi^q,0)$ is a simple zero of $c$, a zero of $b$, and it is not a zero of $a$.
\end{theorem}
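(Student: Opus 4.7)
The plan is direct substitution, exploiting the two zero coordinates of $Q$. Since $u_2=0$, each of $a(Q), b(Q), c(Q)$ reduces to the constant-in-$X_2$ part $a_0$, $b_0$, $c_0$, evaluated at $(X_3,X_4,X_5)=(\xi,\xi^q,0)$. Now $u_5=0$ trivializes two of these pieces: inspecting the explicit formulas, $b_0$ has $X_5$ as an overall factor, while $c_0=\beta^q X_3^2 X_4 X_5(X_4-X_5)$ is visibly divisible by $X_5$, so $b(Q)=c(Q)=0$. In $a_0$, the four monomials containing $X_5$ vanish, leaving
\[
a_0\big|_{X_5=0}=-X_3^2X_4+X_3X_4^2=X_3X_4(X_4-X_3);
\]
at $X_3=\xi,\,X_4=\xi^q$ this equals $\xi^{q+1}(\xi^q-\xi)\neq 0$, the inequality holding because $\xi$ generates a normal basis of $\mathbb{F}_{q^n}/\mathbb{F}_q$ and thus $\xi\notin\mathbb{F}_q$. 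Hence $a(Q)\neq 0$.

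To certify $m_Q(c)=1$, I would compute a single partial derivative at $Q$. The natural candidate is $\partial c/\partial X_2=c_1+2c_2X_2$, which at $Q$ reduces to $c_1(Q)$. Setting $X_5=0$ in $c_1$ annihilates every term except $-\beta^q X_3^2X_4^2$, and evaluating at $X_3=\xi,\,X_4=\xi^q$ gives $-\beta^q\xi^{2(q+1)}\neq 0$ (using $\beta\neq 0$ and $\xi\neq 0$). Consequently $Q$ is a simple zero of $c$.

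Combining the three facts, $m_Q(ac)=m_Q(a)+m_Q(c)=0+1=1$ is odd while $m_Q(b)\geq 1$, so $2m_Q(b)\geq 2>1=m_Q(ac)$, and via Remark \ref{rem irr se e solo se polo disp o delta no square} (together with Proposition \ref{prop_coprime}) this forces the absolute irreducibility of the quadratic $aX_1^2+bX_1+c$ and hence of $\mathcal{Z}$. The only obstacle is bookkeeping in the lengthy coefficient expansions of $a_0,b_0,c_0$; once the double specialization $X_2=X_5=0$ is applied, every verification collapses to a monomial evaluation at $(\xi,\xi^q)$, and no characteristic-specific argument is required.
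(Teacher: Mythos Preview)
Your proof is correct and follows essentially the same approach as the paper. The paper packages the simple-zero verification as a decomposition $c=X_2A_c(X_2,X_3,X_4)+X_5B_c(X_2,X_3,X_4,X_5)$ with $X_2\nmid A_c$, while you compute the partial derivative $\partial c/\partial X_2(Q)=c_1(Q)=-\beta^q\xi^{2(q+1)}$ directly; these are the same calculation, and your version is simply more explicit.
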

\begin{proof}
We can rewrite $c$ as 
\begin{equation*}
    X_2A_c(X_2,X_3,X_4)+X_5B_c(X_2,X_3,X_4,X_5),
\end{equation*}
where $X_2\nmid A_c$ and $X_2 \nmid B_c$. It is readily seen that  $Q$ is a simple point of $c$.

By straightforward computations,  $a(Q)\neq0$ and $b(Q)=0$. The claim follows.
\end{proof}

\begin{cor} \label{cor Z irr gcd 2 n 6}
    Let $\alpha \in \F_{q^6}\setminus \{0\},$ $\beta=\alpha^{1+q^2+q^4}\in \F_{q^2},$ and let $\mathcal{Z}$ be the variety defined by System \eqref{sist sole x caso gcd 2}. Then $\mathcal{Z}$ is absolutely irreducible.
\end{cor}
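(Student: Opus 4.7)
The plan is to leverage the elimination already performed in the text: the first equation of System~(\ref{sist sole x caso gcd 2}) is linear in $X_0$ with nontrivial leading coefficient $(X_2-X_3)(X_4-X_5)-\beta(X_2-X_5)(X_4-X_1)$, so one can solve for $X_0$ and substitute into the second equation. After clearing denominators, this produces the single polynomial $F(X_1,\dots,X_5)=aX_1^2+bX_1+c$ of Equation~(\ref{eq pol secondo grado gcd 2 n 6}), and $\mathcal{Z}$ is birationally equivalent to the affine hypersurface $\{F=0\}\subset\overline{\F_q}^5$. Since birational equivalence preserves absolute irreducibility of irreducible varieties, it suffices to prove that $F$ is absolutely irreducible in $\overline{\F_q}[X_1,\dots,X_5]$.

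Because $F$ is of degree two in $X_1$, a factorization in $\overline{\F_q}[X_1,\dots,X_5]$ must have one of two shapes: either (i) $F$ admits a non-constant factor independent of $X_1$, necessarily a common divisor of $a$, $b$, $c$; or (ii) $F$ splits as $(a_1 X_1+b_1)(a_2 X_1+b_2)$ with coefficients in $\overline{\F_q}[X_2,X_3,X_4,X_5]$. Possibility (i) is excluded at once by Proposition~\ref{prop_coprime}. To exclude (ii), I would invoke the sufficient criterion collected in Remark~\ref{rem irr se e solo se polo disp o delta no square}, which asks for a point $Q\in\overline{\F_q}^4$ such that $m_Q(ac)$ is odd and $2m_Q(b)>m_Q(ac)$. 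Theorem~\ref{thm punto ad ok gcd=2 n 6} supplies precisely such a point: at $Q=(0,\xi,\xi^q,0)$ we have $a(Q)\neq 0$, $b(Q)=0$ and a simple zero of $c$, so that $m_Q(ac)=0+1=1$ is odd while $2m_Q(b)\geq 2>1=m_Q(ac)$. Therefore (ii) is also impossible, and $F$ is absolutely irreducible.

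The step I expect to require the most care is the birationality reduction rather than the irreducibility of $F$ itself, which is essentially handed to us by the preceding lemmas. Formally, one must verify that the map sending $(X_1,\dots,X_5)$ to $(-B/A,X_1,\dots,X_5)$, where $A,B\in\overline{\F_q}[X_1,\dots,X_5]$ are the $X_0$-coefficients of the first equation of System~(\ref{sist sole x caso gcd 2}), yields a birational morphism between $\{F=0\}$ and $\mathcal{Z}$, and that the locus $\{A=0\}\cap\mathcal{Z}$ does not contribute a spurious top-dimensional component. Since $A$ is a nonzero polynomial, a dimension count (both $\mathcal{Z}$ and $\{F=0\}$ are four-dimensional in their respective ambient affine spaces) together with the absolute irreducibility of $F$ guarantees that the unique top-dimensional component of $\{F=0\}$ lifts to the unique top-dimensional component of $\mathcal{Z}$, proving the corollary.
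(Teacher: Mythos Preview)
Your proposal is correct and follows exactly the paper's approach: the paper's proof is the single line ``The claim follows by Proposition~\ref{prop_coprime} and Theorem~\ref{thm punto ad ok gcd=2 n 6},'' and your core argument invokes precisely these two results together with Remark~\ref{rem irr se e solo se polo disp o delta no square}. The extra paragraph you devote to the birational reduction (solving the first equation for $X_0$ and checking that $\{A=0\}\cap\mathcal{Z}$ contributes no spurious component) is a step the paper leaves entirely implicit; your treatment is reasonable, though to make the dimension count airtight you would want to note that the two cubics defining $\mathcal{Z}$ are coprime (each is linear in $X_0$ with coprime coefficients, hence irreducible, and they are visibly distinct), so that $\mathcal{Z}$ is pure of dimension $4$.
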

\begin{proof}
    The claim follows by  Proposition \ref{prop_coprime} and Theorem \ref{thm punto ad ok gcd=2 n 6}.
\end{proof}

\subsection{Case $n=8,$ $\gcd(n,J-I)=2$}
As before, it is not restrictive to assume $I=1$ and $J=3.$\\
Consider  the variety
\begin{equation}\label{sist sole x caso gcd 2 n=8}
\mathcal{Z}:\begin{cases}
    (X_0-X_1)(X_2-X_3)(X_4-X_5)(X_6-X_7)-\beta(X_0-X_3)(X_2-X_5)(X_4-X_7)(X_6-X_1)=0 \\
    (X_1-X_2)(X_3-X_4)(X_5-X_6)(X_7-X_0)-\beta^q(X_1-X_4)(X_3-X_6)(X_5-X_0)(X_7-X_2)=0,
\end{cases}
\end{equation}
with $\beta=\alpha^{1+q^2+q^4+q^6}\in \F_{q^2}\setminus \{0\}.$
From the first equation of System (\ref{sist sole x caso gcd 2 n=8}) we can solve for $X_0,$ and by replacing it in the second one, we obtain \begin{equation}
    aX_1^2+bX_1+c=0,
\end{equation} where 
$$ a=a_0+a_1X_2+a_2X_2^2, \quad 
     b=b_0+b_1X_2+b_2X_2^2,\quad
     c=c_0+c_1X_2+c_2X_2^2,
$$
 with in particular
 \begin{eqnarray*}
     a_2&=&\beta^q(X_3 - X_6)( \beta X_3 X_4 -  \beta X_3 X_7 -  \beta X_4 X_5 +  \beta X_5 X_7 + X_4 X_6 - X_4 X_7 - X_5 X_6 + X_5 X_7),\\
     b_2&=&  \beta^{q+1}X_3^2X_4^2 +  \beta^{q+1}X_3^2X_4X_6 -  \beta^{q+1}X_3^2X_4X_7 -  \beta^{q+1}X_3^2X_6X_7 -  \beta^{q+1}X_3X_4^2X_5\\&& -  \beta^{q+1}X_3X_4^2X_6 -
         \beta^{q+1}X_3X_4X_5X_6 +  \beta^{q+1}X_3X_4X_5X_7 -  \beta^{q+1}X_3X_4X_6^2\\&& +  \beta^{q+1}X_3X_4X_6X_7 +  \beta^{q+1}X_3X_5X_6X_7 +  \beta^{q+1}X_3X_6^2X_7 +
         \beta^{q+1}X_4^2X_5X_6 +  \beta^{q+1}X_4X_5X_6^2\\&& -  \beta^{q+1}X_4X_5X_6X_7 -  \beta^{q+1}X_5X_6^2X_7 + \beta X_3^2X_4X_5 -  \beta X_3^2X_4X_6 -
         \beta X_3^2X_5X_7 +  \beta X_3^2X_6X_7\\&& -  \beta X_3X_4^2X_5 +  \beta X_3X_4^2X_6 +  \beta X_3X_5X_7^2 -  \beta X_3X_6X_7^2 +  \beta X_4^2X_5X_7 -
         \beta X_4^2X_6X_7\\&& -  \beta X_4X_5X_7^2 +  \beta X_4X_6X_7^2 +  \beta^qX_3X_4^2X_6 -  \beta^qX_3X_4^2X_7 -  \beta^qX_3X_5^2X_6 +  \beta^qX_3X_5^2X_7\\&& -
         \beta^qX_4^2X_6^2 +  \beta^qX_4^2X_6X_7 +  \beta^qX_5^2X_6^2 -  \beta^qX_5^2X_6X_7 + X_3X_4X_5X_6 - X_3X_4X_5X_7\\&& - X_3X_4X_6^2 +
        X_3X_4X_6X_7 - X_3X_5^2X_6 + X_3X_5^2X_7 + X_3X_5X_6^2 - X_3X_5X_6X_7\\&& - X_4^2X_5X_6 + X_4^2X_5X_7 + X_4^2X_6^2 -
        X_4^2X_6X_7 + X_4X_5^2X_6 - X_4X_5^2X_7 - X_4X_5X_6^2\\&& + X_4X_5X_6X_7.
 \end{eqnarray*}

 \begin{prop} \label{prop a,b,c copr n=8 gcd 2}
The polynomials     $a,b,c$ have no non-trivial common factors.
\end{prop}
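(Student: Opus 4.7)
My plan is to mimic the strategy used for Proposition \ref{prop_coprime} in the $n=6$ case, with the extra complication that the leading coefficient $a_2$ has a richer factorization. Suppose by contradiction that $d := \gcd(a,b,c)$ has positive degree. Since $a$, $b$, $c$ are quadratic in $X_2$, the combinations
\[
F_{a,b} := a_2 b - b_2 a = s_0(X_3,\ldots,X_7) + s_1(X_3,\ldots,X_7)\,X_2,
\]
\[
F_{a,c} := a_2 c - c_2 a = t_0(X_3,\ldots,X_7) + t_1(X_3,\ldots,X_7)\,X_2,
\]
are at most linear in $X_2$ and are both divisible by $d$. The expression $s_1 F_{a,c} - t_1 F_{a,b}$ then depends only on $X_3,\ldots,X_7$; verifying that it is not the zero polynomial (by specialization, e.g.\ at a generic choice of the variables) forces $d$ to be independent of $X_2$.

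Once $d$ does not depend on $X_2$, writing $a = a_0 + a_1 X_2 + a_2 X_2^2$ and using $d\mid a$ yields $d \mid a_2$. From the expression of $a_2$ given in the excerpt, a short manipulation shows
\[
a_2 = \beta^q\,(X_3 - X_6)\cdot Q, \qquad Q := \beta(X_3 - X_5)(X_4 - X_7) + (X_4 - X_5)(X_6 - X_7).
\]
Inspecting the bidegree of $Q$ in $(X_3,X_6)$: the leading coefficient in $X_3$ is $\beta(X_4-X_7)$, the leading coefficient in $X_6$ is $X_4-X_5$, and the bilinear term $X_3 X_6$ is absent; an elementary case-by-case analysis of the possible factorizations shows $Q$ is absolutely irreducible. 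Hence every irreducible factor of $d$ is either $X_3-X_6$ or $Q$, up to scalars.

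To conclude, it suffices to show that neither $X_3 - X_6$ nor $Q$ is a common factor of $a,b,c$. For $X_3=X_6$, substituting into the second equation of \eqref{sist sole x caso gcd 2 n=8} and using the elimination for $X_0$ provided by the first one (which remains solvable since the corresponding denominator does not vanish identically), one sees that the quadratic in $X_1$ does not vanish identically; more directly, one verifies that $a\big|_{X_3 = X_6}$ is a nonzero polynomial in the remaining variables. For the factor $Q$, since $Q$ has degree $1$ in $X_3$ with nonzero leading coefficient $\beta(X_4-X_7)$, one may parametrize $Q = 0$ by
\[
X_3 = X_5 - \frac{(X_4-X_5)(X_6-X_7)}{\beta(X_4-X_7)},
\]
and substitute this rational expression into $a$ (or $b$, or $c$), checking that at least one of the resulting rational functions is not identically zero.

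The main obstacle is computational: unlike in the $n=6$ case, where $a_2 = \beta^q(X_4-X_5)$ had a single, easily handled linear factor, here $a_2$ has the additional irreducible quadratic factor $Q$, and the intermediate polynomials $s_0,s_1,t_0,t_1$ are considerably larger. Both the nonvanishing of $s_1 t_0 - s_0 t_1$ and the two specializations used to discard the factors $X_3-X_6$ and $Q$ require careful bookkeeping of large symbolic expressions and are most cleanly verified with computer algebra.
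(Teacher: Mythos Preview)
Your strategy is sound and essentially parallels the paper's proof for the first part: eliminate $X_2$-dependence by forming $F_{a,b}$, $F_{a,c}$ and then $s_1 F_{a,c}-t_1 F_{a,b}$, concluding that $d$ is free of $X_2$ and hence $d\mid a_2$. Where you diverge is the endgame. You factor $a_2=\beta^q(X_3-X_6)\,Q$ and then propose to rule out each irreducible factor by substituting into the full polynomials $a$ (or $b$, $c$)---a correct but heavy computation, as you acknowledge.

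The paper's proof takes a shorter path at this very point: once $d$ is independent of $X_2$, not only does $d\mid a_2$, but also $d\mid b_2$ (and $d\mid c_2$). So it suffices to check $\gcd(a_2,b_2)=1$. Since both irreducible factors of $a_2$ have degree~$1$ in $X_3$, this reduces to verifying that neither $X_3-X_6$ nor $Q$ divides the explicit polynomial $b_2$, which is a direct check on a single polynomial of moderate size rather than on the full $a$, $b$, or $c$. This buys a substantial computational saving: you avoid your rational substitution for $Q=0$ entirely and replace the two ``large'' specializations by two divisibility tests against $b_2$.

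In short: your argument is valid, but you are working harder than necessary. After establishing that $d$ does not involve $X_2$, remember that $d$ divides each $X_2$-coefficient of \emph{all three} of $a,b,c$; exploiting $d\mid b_2$ alongside $d\mid a_2$ collapses the final step to a single $\gcd$ computation.
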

\begin{proof}
By contradiction let $d(X_2,X_3,X_4,X_5,X_6,X_7)=\gcd(a,b,c)$ of degree at least $1$. Arguing as in  the proof of Proposition \ref{prop_coprime}, we can prove that $d$ does not depend on $X_2$. Since $d\mid a,b$, we have that $d\mid a_2,b_2$. The factors of $a_2$ are both of degree $1$ in $X_3$ and  it is easy to see  that $\deg(\gcd(a_2,b_2))=0$, contradicting  $\deg(d)\geq 1$.
\end{proof} 

As in the previous Section  (see Remark \ref{rem irr se e solo se polo disp o delta no square}), to obtain the irreducibility of $\mathcal{Z}$, it is sufficient to provide the existence of a $6$-tuple $Q=(u_2,u_3,u_4,u_5,u_6,u_7)\in \overline{\mathbb{F}_q}^{6}$ which is a zero of odd multiplicity $m_1$ for the polynomial $ac$ and a zero of multiplicity $m_2$ for the polynomial $b$ such that $2m_2>m_1$. 
\begin{theorem} \label{thm scelta punto ad ok per n=8 e gcd 2}
    There exist a choice of $u_2,u_5,u_6\in \overline{\mathbb{F}_q}$ such that $Q=(u_2,\frac{\beta^qu_5u_6}{\beta^qu_5 + u_5 - u_6},0,u_5,u_6,0)$ is a simple zero of $c$, a zero of $b$, and not a zero of $a$. 
\end{theorem}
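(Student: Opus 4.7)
The plan is to mimic the strategy of Theorem 4.3 (the $n=6,\,G=2$ case) by specialising enough of the coordinates that the polynomials $a,b,c$ collapse dramatically, and then verifying the three required conditions by direct substitution. Concretely, I would first set $X_4=X_7=0$ in the explicit expansions $a=a_0+a_1X_2+a_2X_2^2$, $b=b_0+b_1X_2+b_2X_2^2$, $c=c_0+c_1X_2+c_2X_2^2$. Every monomial appearing in the listed formulas for $a_i,b_i,c_i$ that carries a factor $X_4$ or $X_7$ drops out, so the three restrictions $a|_{X_4=X_7=0}$, $b|_{X_4=X_7=0}$, $c|_{X_4=X_7=0}$ become comparatively short polynomials in $X_2,X_3,X_5,X_6$ with coefficients that are low-degree polynomials in $\beta,\beta^q$.

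After this specialisation, the second step is to verify by direct computation that $b|_{X_4=X_7=0}$ is divisible by the linear form $\bigl((\beta^q+1)X_5-X_6\bigr)X_3-\beta^q X_5X_6$, whose unique root in $X_3$ is precisely $u_3=\beta^q u_5u_6/(\beta^q u_5+u_5-u_6)$. This is the conceptual reason behind the peculiar choice of $u_3$ in the statement: imposing it guarantees $b(Q)=0$ for every admissible $u_2,u_5,u_6$, subject only to the open condition $\beta^q u_5+u_5-u_6\neq 0$. In parallel, I would check that the same choice of $u_3$ also makes $c|_{X_4=X_7=0}$ vanish at $Q$; experience from the $n=6$ case suggests that the relevant factor of $b|_{X_4=X_7=0}$ reappears (possibly up to extra factors) inside $c|_{X_4=X_7=0}$, so that the two vanishing conditions are simultaneously realised.

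For the simple-zero condition on $c$ at $Q$, the cleanest route is to expand $c$ in powers of $X_4$ and $X_7$ around $X_4=X_7=0$ and inspect the linear term: it suffices to compute one of the partial derivatives $\partial c/\partial X_4$ or $\partial c/\partial X_7$ at $Q$ and show it is nonzero. Since these partials, evaluated after resetting $X_4=X_7=0$, are again polynomials in $u_2,u_5,u_6,\beta,\beta^q$ of much smaller size than $c$ itself, their non-vanishing is easy to detect. The last condition, $a(Q)\neq 0$, is verified by plain substitution into the explicit formula for $a$, using that the leading monomials of $a_2$ and $a_0$ do not all cancel once $X_4=X_7=0$ and $X_3=u_3$ are imposed.

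The main obstacle is purely bookkeeping: identifying the correct linear factor of $b|_{X_4=X_7=0}$ in $X_3$, checking that this same factor (or a suitable multiple) divides $c|_{X_4=X_7=0}$, and singling out the right partial derivative of $c$ to inspect. Once these polynomial identities are in hand, the remainder is routine. Each non-vanishing requirement ($u_5\neq0$, $u_6\neq0$, $\beta^q u_5+u_5-u_6\neq 0$, $a(Q)\neq 0$, and the chosen partial of $c$ being nonzero at $Q$) cuts out a proper closed subset of $\overline{\mathbb{F}_q}^3$, so finitely many of them can be avoided simultaneously; any generic triple $(u_2,u_5,u_6)$ in the complement then realises the point $Q$ with the required properties, and combined with Proposition 4.4 and Remark 3.1 this will imply the absolute irreducibility of $\mathcal Z$ in the $n=8$, $\gcd(n,J-I)=2$ case.
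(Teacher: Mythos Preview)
Your approach is essentially the paper's: specialise $X_4=X_7=0$, kill $b$ via the linear factor in $X_3$, verify simplicity of the zero of $c$ through $\partial c/\partial X_4$, and avoid finitely many bad loci by genericity in $(u_2,u_5,u_6)$. One correction to your reasoning, though it does not break the argument: after setting $X_4=X_7=0$ you will find that $c$ vanishes \emph{identically}, not merely because the factor from $b$ reappears; indeed the paper records $c=X_2X_4X_6\,c_4+X_7c_7$, so $c|_{X_4=X_7=0}\equiv 0$, which is why only $\partial c/\partial X_4$ (equal to $X_2X_6\,c_4|_{X_4=0}$) and $\partial c/\partial X_7$ can witness simplicity, and why the genericity step must also enforce $u_2\neq 0$ and $c_4(u_2,u_3,0,u_5,u_6)\neq 0$.
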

\begin{proof}

We can rewrite $a,b,c$ as
\begin{eqnarray*}
    a&=&(X_2-X_3)X_5X_6(\beta^qX_2X_3 - \beta^qX_2X_6 + X_3X_5 - X_3X_6)+X_4a_4(X_2,X_3,X_4,X_5,X_6)\\&&+X_7a_7(X_2,X_3,X_4,X_5,X_6,X_7),\\
    b&=&(X_2-X_3)X_2X_5X_6(\beta^qX_3X_5 - \beta^qX_5X_6 + X_3X_5 - X_3X_6)+X_4b_4(X_2,X_3,X_4,X_5,X_6)\\&&+X_7b_7(X_2,X_3,X_4,X_5,X_6,X_7),\\
    c&=&X_2X_4X_6c_4(X_2,X_3,X_4,X_5,X_6)+X_7c_7(X_2,X_3,X_4,X_5,X_6,X_7),
\end{eqnarray*}
with $X_4\nmid c_4$.
Note that $$a\left(X_2,\frac{\beta^qX_5X_6}{\beta^qX_5 + X_5 - u_6},0,X_5,X_6,0\right)=\frac{n_a(X_2,X_5,X_6)}{d_a(X_2,X_5,X_6)}$$ with $n_a\not\equiv0\not\equiv d_a$, and $$c_4\left(X_2,\frac{\beta^qX_5X_6}{\beta^qX_5 + X_5 - u_6},0,X_5,X_6\right)=\frac{n_c(X_2,X_5,X_6)}{d_c(X_2,X_5,X_6)}$$ with $n_c\not\equiv0\not\equiv d_c$.
This implies the existence of  $u_2,u_5,u_6\in \overline{\mathbb{F}_q}$ such that $$n_a(u_2,u_5,u_6)d_a(u_2,u_5,u_6)n_c(u_2,u_5,u_6)d_c(u_2,u_5,u_6)\neq0,$$ and the claim follows.
\end{proof}

\begin{cor} \label{cor Z irr gcd 2 n 8}
    Let $\beta=\alpha^{1+q^2+q^4+q^6}\in \F_{q^2}\setminus \{0\},$ and let $\mathcal{Z}$ be the variety defined by System \eqref{sist sole x caso gcd 2 n=8}. Then $\mathcal{Z}$ is absolutely irreducible.
\end{cor}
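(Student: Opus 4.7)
The plan is to conclude in exactly the same way as in Corollary \ref{cor Z irr gcd 2 n 6}, packaging together Proposition \ref{prop a,b,c copr n=8 gcd 2} and Theorem \ref{thm scelta punto ad ok per n=8 e gcd 2}. First I would recall that, by construction, $\mathcal{Z}$ is birationally described by the single polynomial equation $aX_1^2 + bX_1 + c = 0$ obtained by eliminating $X_0$ from the first equation of System \eqref{sist sole x caso gcd 2 n=8} and substituting into the second. Hence the absolute irreducibility of $\mathcal{Z}$ reduces to the absolute irreducibility of the polynomial $P := aX_1^2 + bX_1 + c \in \overline{\mathbb{F}_q}[X_1,\ldots,X_7]$, together with the fact that $a,b,c$ share no common factor (otherwise such a factor would cut out a spurious component).

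By Proposition \ref{prop a,b,c copr n=8 gcd 2}, $\gcd(a,b,c) = 1$, which rules out any factor of $P$ independent of $X_1$. It then remains only to exclude a factorization of $P$ into two linear factors in $X_1$. By Remark \ref{remMultPol}, applied in the form of the sufficient condition recorded in Remark \ref{rem irr se e solo se polo disp o delta no square}, it is enough to produce a tuple $Q \in \overline{\mathbb{F}_q}^{6}$ with $m_Q(ac)$ odd and $2m_Q(b) > m_Q(ac)$. But this is exactly the content of Theorem \ref{thm scelta punto ad ok per n=8 e gcd 2}: the point $Q$ constructed there is a simple zero of $c$, is not a zero of $a$, and is a zero of $b$, so that $m_Q(ac) = 1$ is odd while $m_Q(b) \geq 1$, giving $2m_Q(b) \geq 2 > 1 = m_Q(ac)$.

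Combining these two inputs yields the absolute irreducibility of $P$, and therefore of $\mathcal{Z}$. There is essentially no obstacle in this corollary itself: all the technical work has been carried out beforehand, in Proposition \ref{prop a,b,c copr n=8 gcd 2} (which handles the common-factor analysis of the messy coefficients $a,b,c$) and in Theorem \ref{thm scelta punto ad ok per n=8 e gcd 2} (which exhibits the crucial test point). The only subtlety one must not overlook is the reminder that the elimination step needs $a \not\equiv 0$ and $\gcd(a,b,c) = 1$ to preserve equivalence of irreducibility between $\mathcal{Z}$ and $P$; both of these are guaranteed by the propositions just cited.
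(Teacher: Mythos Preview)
Your argument is correct and follows exactly the paper's approach: the corollary is deduced directly from Remark \ref{rem irr se e solo se polo disp o delta no square}, Proposition \ref{prop a,b,c copr n=8 gcd 2}, and Theorem \ref{thm scelta punto ad ok per n=8 e gcd 2}. Your write-up simply spells out in more detail why these three ingredients combine to give the absolute irreducibility of $P=aX_1^2+bX_1+c$, and hence of $\mathcal{Z}$.
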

\begin{proof}
    It is a direct consequence of Remark \ref{rem irr se e solo se polo disp o delta no square}, Proposition \ref{prop a,b,c copr n=8 gcd 2}, and Theorem \ref{thm scelta punto ad ok per n=8 e gcd 2}.
\end{proof}

 \subsection{Case $q$ even, $n=8,$ $\gcd(n,J-I)=4$} \label{subsect q even n 8 gcd 4}
By Theorem \ref{thm equivalenza aggiunto} and Proposition \ref{prop equiv CMPZ somma eq n/2} it is enough to consider the case $I=1$ and $J=5.$\\
In order to provide non-existence results for scattered binomials of this shape, it is sufficient to establish the absolute irreducibility of the  variety
 \begin{equation}\label{sist sole x caso gcd 4}
    \mathcal{Z}:\begin{cases}
    (X_0-X_1)(X_4-X_5)-\beta(X_0-X_5)(X_4-X_1)=0 \\
    (X_1-X_2)(X_5-X_6)-\beta^q(X_1-X_6)(X_5-X_2)=0 \\
    (X_2-X_3)(X_6-X_7)-\beta^{q^2}(X_2-X_7)(X_6-X_3)=0 \\
    (X_3-X_4)(X_7-X_0)-\beta^{q^3}(X_3-X_0)(X_7-X_4)=0,
\end{cases}
\end{equation}
with $\beta=\alpha^{1+q^4} \in \F_{q^4}\setminus \{0,1\}$ (recall Remark \ref{rem norm no 1 and I < n/4}). From the first and second equations of System \eqref{sist sole x caso gcd 4} one  obtains 
\begin{equation*}
    X_0=\dfrac{(\beta+1)X_1X_5 + \beta X_4X_5 + X_1X_4 }{\beta X_1 + (\beta+1)X_4 + X_5}
\end{equation*}
and \begin{equation*}
    X_2=\dfrac{(\beta^q+1)X_1X_5 + \beta^qX_5X_6 + X_1X_6}{\beta^qX_1 + (\beta^q+1)X_6 + X_5}.
\end{equation*}
By replacing this expression of $X_2$ in the third equation of System \eqref{sist sole x caso gcd 4}, we obtain $R_1(X_1,X_3,X_5,X_6)\cdot X_7 + R_2(X_1,X_3,X_5,X_6)=0,$ where
\begin{eqnarray*}
    R_1
    &=&(\beta^{q+q^2}+\beta^q)X_1X_3 + (\beta^{q+q^2}+1)X_1X_6 + (\beta^{q+q^2}+\beta^q+\beta^{q^2}+1)X_3X_6 \\&&+ (\beta^{q+q^2}+\beta^{q^2})X_6^2  + (\beta^q+1)X_1X_5  + (\beta^q+\beta^{q^2 })X_5X_6 + (\beta^{q^2}+1)X_3X_5  \\
    R_2
    &=& (\beta^{q+q^2}+\beta^{q^2})X_1X_3X_5 + (\beta^{q+q^2}+\beta^q+\beta^{q^2}+1)X_1X_5X_6 + (\beta^{q+q^2}+1)X_3X_5X_6\\&& + (\beta^{q+q^2}+\beta^q)X_5X_6^2 + (\beta^q+\beta^{q^2})X_1X_3X_6  + (\beta^q+1)X_3X_6^2     + (\beta^{q^2}+1)X_1X_6^2.  
\end{eqnarray*}
Thus, after replacing such expressions of $X_0$ and $X_7$ in the fourth equation, the variety $\mathcal{Z}$ reads as 
\begin{equation}\label{sist Z gcd 4 prime tre razionali}
    \mathcal{Z}:\begin{cases}
X_0=\frac{(\beta+1)X_1X_5 + \beta X_4X_5 + X_1X_4 }{\beta X_1 + (\beta+1)X_4 + X_5} \\
    X_2=\frac{(\beta^q+1)X_1X_5 + \beta^qX_5X_6 + X_1X_6}{\beta^qX_1 + (\beta^q+1)X_6 + X_5} \\
    X_7=\frac{R_2(X_1,X_3,X_5,X_6)}{R_1(X_1,X_3,X_5,X_6)} \\
    S(X_1,X_3,X_4,X_5,X_6)=0,
\end{cases}
\end{equation} where $S(X_1,X_3,X_4,X_5,X_6)$ is a homogeneous polynomial of degree $5,$ and such that $\deg_{X_i}(G)=2,$ for $i=1,3,4,5,6.$ Therefore we can write \[S(X_1,X_3,X_4,X_5,X_6)=aX_1^2+bX_1+c,\] where $a,b,c \in \F_{q^3}[X_3,X_4,X_5,X_6]$ are homogeneous polynomials of degree $5,4,3$ respectively. 
\begin{remark} \label{rem irr Z iff irr S iff irr c}
    Since the first three equations of System \eqref{sist Z gcd 4 prime tre razionali} are rational in $X_0,X_2$ and $X_7, $ the variety defined by System \eqref{sist sole x caso gcd 4} is absolutely irreducible if and only if so is $S(X_1,X_3,X_4,X_5,X_6)$. Also, being $S$ homogeneous, a sufficient condition for $S(X_1,X_3,X_4,X_5,X_6)$ to be absolutely irreducible is that the polynomial $c=c(X_3,X_4,X_5,X_6)=S(0,X_3,X_4,X_5,X_6)$ is absolutely irreducible.
\end{remark} By direct computations, it follows that $c$ equals 
\begin{equation}
   \label{eq c caso gcd 4} (\beta+1)^{q^3}X_3X_5X_6\left[(\beta^{q+q^2}+1)X_3X_5 + (\beta^{q+q^2}+\beta^q)X_5X_6 + (\beta^q+1)X_3X_6\right]+T(X_3,X_4,X_5,X_6),
\end{equation}
where $\deg(T(X_3,X_4,X_5,X_6))=5$ and $X_4 \mid T(X_3,X_4,X_5,X_6).$  
\begin{prop} \label{prop irr c}
    Let $\beta \in \F_{q^3}\setminus \{0,1\}$ and $c$ as in Equation \eqref{eq c caso gcd 4}. Then $c$ is absolutely irreducible.
\end{prop}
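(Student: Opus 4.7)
The plan is to exploit the decomposition $c = (\beta+1)^{q^3} X_3 X_5 X_6 \cdot Q + T$ with $X_4 \mid T$, by specializing at $X_4 = 0$. Since
\[
c(X_3, 0, X_5, X_6) = (\beta+1)^{q^3} X_3 X_5 X_6 \cdot Q(X_3, X_5, X_6),
\]
with $Q=(\beta^{q+q^2}+1)X_3X_5+(\beta^{q+q^2}+\beta^q)X_5X_6+(\beta^q+1)X_3X_6$, I would first show that $Q$ is absolutely irreducible. In characteristic $2$, a cross-term-only ternary form $aXY+bYZ+cXZ$ splits into linear forms if and only if one of $a,b,c$ vanishes; here the three vanishing conditions translate respectively to $\beta=1$, $\beta^{q^2}=1$, and $\beta^{q+q^2}=1$, each of which forces $\beta=1$ under the hypothesis $\beta\in\F_{q^3}\setminus\{0,1\}$ (using $\beta=\beta^{q^3}$ and, for the last, the identity $\beta^{1+q+q^2}\in\F_q$ together with $\beta^2=1$ in characteristic $2$). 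Since $Q$ is visibly coprime with each of $X_3,X_5,X_6$, the specialization $c|_{X_4=0}$ decomposes into exactly four pairwise coprime absolutely irreducible factors.

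Next, suppose for contradiction that $c=c_1 c_2$ is a nontrivial homogeneous factorization. Because $c|_{X_4=0}\neq 0$, neither reduction $c_i|_{X_4=0}$ vanishes, so $\deg c_i = \deg(c_i|_{X_4=0})$ and the four irreducible factors of $c|_{X_4=0}$ partition between $c_1|_{X_4=0}$ and $c_2|_{X_4=0}$. Up to ordering this yields seven cases: three splits of shape $(1,4)$ isolating a single $X_i$, three splits of shape $(2,3)$ of the form $\{X_i,X_j\}\sqcup\{X_k,Q\}$, and the symmetric split $\{Q\}\sqcup\{X_3,X_5,X_6\}$.

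For each case I would lift the reductions to actual factors of $c$ and compare the coefficients of $X_4^1$ against those of $T$. In the $(1,4)$ cases a linear lift $X_i+\mu X_4$ must divide $c$, so $c$ vanishes on the hyperplane $X_i=\mu X_4$; substituting into the explicit expression for $T$ yields an overdetermined system on $\mu$ and on the coefficients coming from $\beta$ which I expect to be infeasible. The asymmetric $(2,3)$-splits are handled analogously by matching the quadratic lifts of $X_iX_j$ and of $X_k Q$. The main obstacle is the symmetric split $\{Q\}\sqcup\{X_3,X_5,X_6\}$, where the lift $Q+X_4 A$ carries three free coefficients in the linear form $A$; here I expect to close the case by identifying a specific monomial in the $X_4$-linear part of $c$ whose coefficient in any product $(Q+X_4 A)(X_3 X_5 X_6+X_4 B)$ is algebraically forced to vanish, but is nonzero by direct inspection of $T$ derived from System~\eqref{sist Z gcd 4 prime tre razionali}. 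Excluding all seven cases shows that $c$ admits no nontrivial factorization and is therefore absolutely irreducible.
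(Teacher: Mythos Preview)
Your strategy coincides with the paper's: the paper dehomogenizes $c$ with respect to $X_4$, and the leading homogeneous part of the dehomogenized polynomial $c_*$ is precisely your $c|_{X_4=0}$, so a hypothetical factor of $c_*$ of degree at most $2$ must have leading term equal to one of the seven shapes you list. The paper then substitutes each of the seven candidates into $c_*$ and exhibits, in every case, an explicit monomial coefficient that cannot vanish for $\beta\neq 0,1$; the last case (your ``symmetric split'' $\{Q\}\sqcup\{X_3,X_5,X_6\}$) is indeed the longest and is handled by a chain of coefficient eliminations, with a further subcase distinction according to whether $\beta^{1+q}=1$. Your proposal identifies the same seven cases and the same obstacle, but the phrases ``I expect to be infeasible'' and ``I expect to close the case'' show that the computations have not been carried out. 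Since the entire content of the proof \emph{is} that sevenfold computation, what you have is a correct plan rather than a proof.

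One cautionary remark: your irreducibility argument for $Q$ invokes $\beta=\beta^{q^3}$, taken from the stated hypothesis $\beta\in\F_{q^3}$. In the paper's surrounding context (Section~\ref{subsect q even n 8 gcd 4}) one actually has $\beta=\alpha^{1+q^4}\in\F_{q^4}$, and the statement of the proposition appears to contain a typo. With $\beta\in\F_{q^4}$ the vanishing $\beta^{q+q^2}+1=0$ is equivalent to $\beta^{1+q}=1$, which does \emph{not} force $\beta=1$; for such $\beta$ the quadratic $Q$ degenerates to $(\beta^q+1)\,X_6(X_3+X_5)$. If you carry this out in the $\F_{q^4}$ setting, your ``partition of four irreducible factors'' bookkeeping will need adjustment in that subcase.
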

\begin{proof}
    Suppose by contradiction that $c$ possesses a non-trivial factorization. Denote by $c_{*}(X_3,X_5,X_6)$ the dehomogenized of $c$ with respect to $X_4.$ Then also $c_{*}$ is reducible and from Equation \eqref{eq c caso gcd 4}, it follows that a factor  of $c_{*}$ of degree $\leq 2$ has to be equal to one of the following polynomials
    \begin{eqnarray*}
        X_3+u_1, \quad 
        X_5+u_2, \quad 
        X_6+u_3, \quad 
        X_3X_5+u_4X_3+u_5X_5+u_6X_6+u_7, \\ 
        X_3X_6+u_8X_3+u_9X_5+u_{10}X_6+u_{11}, \quad 
        X_5X_6+u_{12}X_3+u_{13}X_5+u_{14}X_6+u_{15},\\
        (\beta^{q+q^2}+1)X_3X_5 + (\beta^{q+q^2}+\beta^q)X_5X_6 + (\beta^q+1)X_3X_6+u_{16}X_3+u_{17}X_5+u_{18}X_6+u_{19},
    \end{eqnarray*}
    for suitable $u_1,\dots,u_{19}\in \overline{\fq}.$ Let consider each case separately. 
    \begin{itemize}
        \item $X_3+u_1$\\
        Let $\Tilde{c}=c_{*}(u_1,X_5,X_6).$ By direct computations  the coefficient of $X_5$ in $\Tilde{c}$ is $u_1^2\beta\qqq(\beta+1)^{1+q^2},$ that is zero if and only if $u_1=0.$ If $u_1=0,$ then $c_{*}(0,X_5,X_6)=\beta(\beta+1)^{q^3}(\beta+\beta\q)\q X_5^2X_6+\beta^q(\beta+1)\qq(\beta^{1+q^3}+1)X_5^2X_6^2+\gamma X_5X_6^2,$ with $\gamma \in \F_{q^3}.$ Since $\beta+\beta\q=0=\beta^{1+q^3}+1$ implies $\beta=1,$ it follows that $c_{*}(0,X_5,X_6)$ does not vanish.
        \item $X_5+u_2$\\
        Let $\Tilde{c}=c_{*}(X_3,u_2,X_6).$ By direct computations  the coefficients of $X_3$ and $X_3^2X_5^2$ in $\Tilde{c}$ are respectively $u_2^2\beta(\beta+1)^{q^2+q^3}$ and $(\beta+1)^{q+q^3}(\beta+u_2+1) .$  Such coefficients cannot be simultaneously zero, so $\Tilde{c}\neq 0.$
        \item $X_6+u_3$\\
        Let $\Tilde{c}=c_{*}(X_3,X_5,u_3).$ By direct computations  the coefficients of $X_3$ and $X_3^2$ in $\Tilde{c}$ are respectively $u_3^2(\beta+1)^{1+q}(\beta^{q^2+q^3}+1)$ and $u_3(\beta+1)^{1+q}(\beta^{q^2+q^3} + u_3 \beta\qqq + \beta\qqq + u_3) .$  Such coefficients are both zero if and only if $u_3=0$ or $\beta^{q^2+q^3}+1=0=\beta^{q^2+q^3} + u_3 \beta\qqq + \beta\qqq + u_3.$ The latter condition implies $(u_3+1)(\beta+1)^{q^3}=0,$ so $u_3=1.$ Now we get $c_{*}(X_3,X_5,0)=\beta(\beta+1)^{q^2+q^3}X_3X_5^2+\gamma_1X_3^2X_5+\theta_1X_3^2X_5^2$ and $c_{*}(X_3,X_5,1)=(\beta+1)^{q^2}(\beta^q+\beta)X_3^2X_5^2+L(X_3,X_5),$ with $\deg(L)\leq 3.$ It is straightforward that $c_{*}(X_3,X_5,0)\neq 0.$ Also, $\beta^q+\beta=0=\beta^{q^2+q^3}+1$ implies $\beta=1,$ we obtain $c_{*}(X_3,X_5,1)\neq 0.$ 
        \item $X_3X_5+u_4X_3+u_5X_5+u_6X_6+u_7$\\
        Let $\Tilde{c}(X_5,X_6)=(X_5+u_4)^2c_{*}(\frac{u_5X_5+u_6X_6+u_7}{X_5+u_4},X_5,X_6).$ Of course $X_3X_5+u_4X_3+u_5X_5+u_6X_6+u_7\mid c_{*}$ if and only if $\Tilde{c}(X_5,X_6)=0.$ By MAGMA computations,  the coefficients of $X_5,$  $X_6^4$, and $X_5^4X_6^2$ in $\Tilde{c}$ are respectively $u_7^2\beta\qqq(\beta+1)^{1+q^2},$  $u_6^2(\beta+1)^{1+q+q^3} $, and $\beta^q(\beta+1)^{q^2}(\beta^{1+q^3}+u_5\beta\qqq+u_5+1).$  They are all zero  if and only if $u_6=0=u_7$ and $u_5=\frac{\beta^{1+q^3}+1}{\beta\qqq+1}.$ By replacing such conditions in $\Tilde{c}(X_5,X_6),$ we obtain with the help of MAGMA that $\Tilde{c}(X_5,X_6)=\beta\qqq(\beta+1)^{2+q^2}(\beta^{1+q^3}+1)X_5^4+\beta\qqq (\beta+1)^{2+q^3}(\beta\q+\beta\qq)X_5^4X_6+L(X_5,X_6),$ where $\deg_{X_5}(L)\leq 3.$ Since $\beta\q+\beta\qq=0=\beta^{1+q^3}+1$ implies $\beta=1,$ we obtain $\Tilde{c}\neq 0.$
        \item $X_3X_6+u_8X_3+u_9X_5+u_{10}X_6+u_{11}$\\
        Let $\Tilde{c}(X_5,X_6)=(X_6+u_8)^2c_{*}(\frac{u_9X_5+u_{10}X_6+u_{11}}{X_6+u_8},X_5,X_6).$  By MAGMA computations,  the coefficients of $X_5,$  $X_5^3X_6^3$, and $X_5^2X_6^4$ in $\Tilde{c}$ are respectively $u_{11}^2\beta\qqq(\beta+1)^{1+q^2},$  $u_9\beta\q(\beta+1)^{q^2+q^3} $, and $\beta^q(\beta+1)^{q^2}(\beta^{1+q^3}+u_{10}\beta\qqq+u_{10}+1).$  They are all zero  if and only if $u_9=0=u_{11}$ and $u_{10}=\frac{\beta^{1+q^3}+1}{\beta\qqq+1}.$ By replacing such conditions in $\Tilde{c}(X_5,X_6),$  MAGMA computations show that the coefficient of $X_6^4$ in $\Tilde{c}(X_5,X_6)$ is $\beta\qqq(\beta+1)^{1+q+q^3}(\beta+\beta^{q^2})(\beta^{1+q^3}+1),$ that is zero if and only if $\beta^{1+q}=1$ or $\beta \in \fq.$ In the first case, the coefficient of   $X_5X_6^4$ in $\Tilde{c}(X_5,X_6)$ becomes $\beta(\beta+1)^{3+2q}.$ In the latter, it turns out to be $\beta^{1+q}(\beta+1)^{2+3q}.$ They are both non-zero, so $\Tilde{c}(X_5,X_6)\neq 0.$
        \item $X_5X_6+u_{12}X_3+u_{13}X_5+u_{14}X_6+u_{15}$ \\
        Let $\Tilde{c}(X_3,X_6)=(X_6+u_{13})^2c_{*}(X_3,\frac{u_{12}X_3+u_{14}X_6+u_{15}}{X_6+u_{13}},X_6).$  By MAGMA computations,  the coefficients of $X_3,$  $X_3^2X_6^4$ and $X_3^3X_6^3$ in $\Tilde{c}$ are respectively $u_{15}^2\beta(\beta+1)^{q^2+q^3},$  $(\beta+1)^{q+q^3}(\beta+u_{14}+1) $ and $u_{12}\beta^q(\beta+1)^{q+q^3}.$  They are all zero  if and only if $u_{12}=0=u_{15}$ and $u_{14}=\beta+1.$ By replacing such conditions in $\Tilde{c}(X_3,X_6),$ it follows that the coefficients of $X_3X_6^4$ and $X_3^2X_6^3$ in $\Tilde{c}(X_3,X_6)$ are respectively $\beta(\beta+1)^{1+q}(\beta\qq+\beta\qqq)$ and $(\beta+1)^{1+q}(\beta^{1+q^2}+\beta +u_{13}\beta\qqq+u_{13}).$ They are both zero if and only if $\beta \in \fq$ and $u_{13}=\frac{\beta^{1+q^2}+\beta}{\beta\qqq+1}.$ Combining these two conditions with the previous ones, one gets $\Tilde{c}(X_3,X_6)=\beta^2(\beta+1)^6X_6(X_6+1)(X_3+X_6)^2\neq 0.$
  \item $(\beta^{q+q^2}+1)X_3X_5 + (\beta^{q+q^2}+\beta^q)X_5X_6 + (\beta^q+1)X_3X_6+u_{16}X_3+u_{17}X_5+u_{18}X_6+u_{19}$ \\
  Let \\$\Tilde{c}(X_3,X_5)=\left[(\beta^{q+q^2}+\beta^q)X_5+(\beta\q+1)X_3+u_{18}  \right]^2c_{*}\left(X_3,X_5,\frac{(\beta^{q+q^2}+1)X_3X_5+u_{16}X_3+u_{17}X_5+u_{19}}{(\beta^{q+q^2}+\beta^q)X_5+(\beta\q+1)X_3+u_{18}}\right).$
  We want to prove that $\Tilde{c}(X_3,X_5)$ is not zero. By computations with MAGMA, we observe that the coefficient of $X_3$ in $\Tilde{c}(X_3,X_5)$ is $u_{19}^2(\beta+1)^{1+q}(\beta^{q^2+q^3}+1).$ Such coefficient is zero if and only if $u_{19}=0$ or $\beta^{1+q}=1.$ Consider these two cases separately. \\ \textbf{Case 1.} $\beta^{1+q}+1=0$\\
  Since $\beta^q=\beta^{-1},$ it follows by direct computations that the coefficient of $X_3^2X_5^4$ in $\Tilde{c}$ is $(\beta+1)^5,$ so $\Tilde{c}(X_3,X_5)\neq 0.$ \\
  \textbf{Case 2.}
  $u_{19}=0$\\
  We can suppose without restriction that $\beta^{1+q}+1\neq 0.$
  By replacing $u_{19}=0$ in $\Tilde{c}(X_3,X_5),$ MAGMA computations show that the coefficient of $X_3^3X_5^3$ in $\Tilde{c}$ is $$(\beta+1)^{q^3}(\beta^{q+q^2}+1)(\beta^{q+q^2}u_{16} + \beta^{q+q^2}u_{18} + \beta\q u_{16} + \beta\q u_{17} + u_{17} + u_{18}).$$  Such a coefficient vanishes if and only if $u_{16}=\frac{(\beta^q+1)u_{17}+(\beta^{q+q^2}+1)u_{18}}{\beta^q(\beta+1)^{q^2}}.$ This implies that the coefficient of $X_3^2X_5^4$ in $\Tilde{c}(X_3,X_5)$ is $$\beta^{3q}(\beta+1)^{3q^2}[(\beta+1)\qqq(\beta^{q+q^2}+1)u_{17}+\beta\qq(\beta+1)^{2q}(\beta+\beta\qqq)],$$ that is zero if and only if $u_{17}=\frac{\beta\qq(\beta+1)^{2q}(\beta+\beta\qqq)}{(\beta+1)\qqq(\beta^{q+q^2}+1)}.$ By replacing such an expression of $u_{17}$ in $\Tilde{c}(X_3,X_5),$ the coefficient of $X_3^4X_5^2$ turns out to be $$\beta^q(\beta+1)^{1+q^2+2q^3}(\beta^{q+q^2}+1)(\beta^{1+q+q^2} + \beta^{1+q^2} + \beta^{q+q^2+q^3} + \beta^{q^2+q^3} + \beta\qqq u_{18} + u_{18}),$$ which is zero if and only if $u_{18}=\frac{\beta\qq(\beta+1)\q(\beta+\beta\qqq)}{(\beta+1)\qqq}.$ By substituting such an expression of $u_{18}$ in $\Tilde{c}(X_3,X_5),$ we get that  the coefficient of $X_3^3$ in $\Tilde{c}(X_3,X_5)$ is $$\beta^{3q+2q^2}(\beta+1)^{1+3q+4q^2+2q^3}(\beta\q+\beta\qqq)(\beta+\beta\qqq),$$ which is not zero unless $\beta \in \F_{q^2}.$ Finally, if $\beta \in \F_{q^2},$ we observe by direct computations that the coefficient of $X_3X_5^4$ in $\Tilde{c}(X_3,X_5)$ is $\beta^{1+4q}(\beta+1)^{7+7q}\neq 0.$ It follows that $\Tilde{c}(X_3,X_5)\neq 0.$
    \end{itemize} 
The statement follows.
\end{proof}
The following result is a direct consequence of Remark \ref{rem irr Z iff irr S iff irr c} and Proposition \ref{prop irr c}.
\begin{prop} \label{prop Z irr gcd 4 n 8 q even}
    Let $q=2^n,$  $\beta=\alpha^{1+q^4} \in \F_{q^4}\setminus \{0,1\},$ and $\mathcal{Z}$ as in  \eqref{sist sole x caso gcd 4}. Then $\mathcal{Z}$ is an absolutely irreducible variety.
\end{prop}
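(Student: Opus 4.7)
The plan is short: the statement is a direct synthesis of two facts already established in the preceding subsection, so I would simply assemble them.

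First, I would invoke Remark \ref{rem irr Z iff irr S iff irr c}. The first three equations of the reformulated system \eqref{sist Z gcd 4 prime tre razionali} express $X_0,X_2,X_7$ as rational functions of $X_1,X_3,X_4,X_5,X_6$, so $\mathcal{Z}$ is birationally equivalent to the hypersurface cut out by $S(X_1,X_3,X_4,X_5,X_6)=0$. Thus absolute irreducibility of $\mathcal{Z}$ is equivalent to absolute irreducibility of the homogeneous quintic $S$.

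Second, writing $S = a X_1^2 + b X_1 + c$ with $c=c(X_3,X_4,X_5,X_6)=S(0,X_3,X_4,X_5,X_6)$ homogeneous of degree $5$ in four variables, I would observe that any non-trivial factorization of $S$ over $\overline{\mathbb{F}_q}$ would, upon setting $X_1=0$, descend to a non-trivial factorization of $c$. Since $c \not\equiv 0$ (as is apparent from the explicit expression in \eqref{eq c caso gcd 4}), it follows that absolute irreducibility of $c$ is sufficient to guarantee absolute irreducibility of $S$, hence of $\mathcal{Z}$.

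Finally, Proposition \ref{prop irr c} supplies precisely the absolute irreducibility of $c$ over $\overline{\mathbb{F}_q}$ for all $\beta \in \mathbb{F}_{q^4}\setminus\{0,1\}$, and combining the three observations above gives the claim at once. The entire technical content has already been consumed in proving Proposition \ref{prop irr c}, whose proof enumerates all possible low-degree factor shapes of the dehomogenization $c_{*}$ of $c$ and rules each one out via explicit coefficient inspection (the last and heaviest case being handled with MAGMA assistance). Consequently, no genuinely new obstacle arises at the level of this proposition: it is a clean consequence of the birational reduction plus the irreducibility of $c$, both already in hand.
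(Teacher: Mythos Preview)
Your proposal is correct and matches the paper's own proof, which simply states that the proposition is a direct consequence of Remark~\ref{rem irr Z iff irr S iff irr c} and Proposition~\ref{prop irr c}. Your write-up merely spells out the content of that remark (the birational reduction to $S$, and the observation that homogeneity of $S$ together with $c\not\equiv 0$ forces any factorization of $S$ to restrict to a non-trivial factorization of $c$), so the approaches coincide.
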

\subsection{Main result for scattered binomials when $n \leq 8$}
Thanks to the previous results in this section and the machinery involving the algebraic varieties introduced in Section \ref{sec link variety scatt}, we are able to prove the following asymptotic classification result for scattered binomials.
\begin{theorem}
    Let $3\leq n \leq 8,$  $q\geq 162019556021,$ and $0<I<J<n.$ Then $f(x)=x^{q^I}+\alpha x^{q^J} $ is scattered on $\Fn$ if and only if one of the following conditions holds:\begin{itemize}
        \item [(i)]$\gcd(I,n)=1,$  $I+J=n,$ and $\textnormal{N}_{\Fn / \fq}(\alpha)\neq 1;$
        \item [(ii)] $n=6,$ $J-I=3,$ and $\textnormal{N}_{q^6 / q^3}(\alpha)=\alpha^{1+q^3}$ satisfies the  conditions of Theorem \ref{thm n=6 caratt scatt};
        \item [(iii)]$q$  odd, $n=8,$ $\gcd(I,n)=1,$ $J-I=4$, and $\textnormal{N}_{q^8 / q^4}(\alpha)=\alpha^{1+q^4}=-1.$ 
    \end{itemize}  
\end{theorem}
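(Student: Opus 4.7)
The plan is to establish the equivalence by treating the two directions separately. The sufficiency is immediate once all the known positive constructions are collected: case (i) is the original Lunardon--Polverino family (whose scattering property is standard, together with the norm obstruction of Remark \ref{rem produttore uguali iff I+J=n}), case (ii) is precisely Theorem \ref{thm n=6 caratt scatt}, and case (iii) is precisely Theorem \ref{thm n=8 timp zini}.

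For the necessity direction, the plan is to first normalise by the $\gammal$-equivalence and the adjoint operation (Theorem \ref{thm equivalenza aggiunto} together with Remark \ref{rem norm no 1 and I < n/4}) so that one may assume $1 \leq I < J < n$, $I < n/2$ and $\gcd(I,J,n)=1$. I would then split on $n$. For $n \in \{3,4\}$ the classification is already known from the literature (\cite[Section 4]{Csajbokmarinopolverino2018classesofequivalence} and \cite[Theorem 3.4]{csajbokzanella2018PG1q4}). For the primes $n \in \{5,7\}$, Corollary \ref{cor solo bin LP} already delivers exactly the conclusion (i).

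The substantive part of the proof concerns $n \in \{6,8\}$, which I would handle by a further split on $G = \gcd(n, J-I)$. When $G = 1$ the main Theorem \ref{main theorem} applies and forces the LP form (i). When $G$ equals the CMPZ parameter --- namely $G=3$ for $n=6$ and $G=4$ for $n=8$ --- the CMPZ classifications apply: Theorem \ref{thm n=6 caratt scatt} for $n=6$, and Theorem \ref{thm n=8 timp zini} for $n=8$ with $q$ odd. The residual cases are $G = 2$ for $n\in\{6,8\}$ and $G=4$ for $n=8$ with $q$ even. For each of these, Corollary \ref{cor Z irr gcd 2 n 6}, Corollary \ref{cor Z irr gcd 2 n 8}, and Proposition \ref{prop Z irr gcd 4 n 8 q even} furnish the absolute irreducibility of the variety $\mathcal{Z}$; combining this with Proposition \ref{prop W meno x y ass irr iff Z ass irr} promotes the conclusion to the absolute irreducibility of $\mathcal{W}\setminus\{\mathcal{X}\cup\mathcal{Y}\}$. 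Because this component is fixed by the collineation $\Psi_q$, it descends to an $\fq$-rational absolutely irreducible component of $\mathcal{V}\subseteq\mathbb{P}^{2n-1}(\fq)$ of dimension $n$, not contained in $\overline{\mathcal{X}}\cup\overline{\mathcal{Y}}$. A direct application of Theorem \ref{thm lang weil versione tredici terzi} with $\deg(\mathcal{V})\leq 2^n$, reproducing the Lang--Weil argument of the proof of Theorem \ref{main theorem}, yields $\fq$-rational affine points of $\mathcal{V}$ off $\overline{\mathcal{X}}\cup\overline{\mathcal{Y}}$; via the correspondence of Subsection \ref{sec link variety scatt} these translate into affine $\Fn$-points $(x,y)$ of $\mathcal{C}'_f$ with $x\notin\fq$ and $y\neq 0$, contradicting the scattered hypothesis by Proposition \ref{link with curves 2}.

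The main obstacle is twofold, and neither part is deep given the preparatory sections. First, one must verify that the explicit threshold $q \geq 162019556021$ is large enough uniformly across all residual cases $n \leq 8$; this amounts to checking that in the Lang--Weil estimate the dominant error term $5\cdot 2^{13n/3}q^{n-1}$ is beaten by $q^n$ with a margin exceeding the trivial intersection bound $2^{n+1}$, the worst case being $n=8$ with $\deg(\mathcal{V}) \leq 2^n = 256$. Second, the remaining work is careful book-keeping: one must confirm via Theorem \ref{thm equivalenza aggiunto} and Proposition \ref{prop equiv CMPZ somma eq n/2} that every admissible pair $(I,J)$ in the residual cases is indeed $\gammal$-equivalent to one of the normalised configurations $(I,J)=(1,3)$ or $(1,5)$ treated in Sections \ref{n6G2}--\ref{subsect q even n 8 gcd 4}, so that the absolute irreducibility results established there cover the full classification.
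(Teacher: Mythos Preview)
Your proposal is correct and follows essentially the same route as the paper's proof: sufficiency via the cited constructions, necessity by normalising with Theorem \ref{thm equivalenza aggiunto} and Proposition \ref{prop equiv CMPZ somma eq n/2}, disposing of $n\in\{3,4,5,7\}$ and of the CMPZ cases via the literature and Corollary \ref{cor solo bin LP}, and then in the residual cases invoking Corollaries \ref{cor Z irr gcd 2 n 6}, \ref{cor Z irr gcd 2 n 8}, Proposition \ref{prop Z irr gcd 4 n 8 q even} together with Proposition \ref{prop W meno x y ass irr iff Z ass irr} and the Lang--Weil count exactly as in the proof of Theorem \ref{main theorem}. The one point worth stressing is the one you already flag: for $n=8$ with $G=1$ you cannot simply quote Theorem \ref{main theorem}, since its stated threshold $q\geq 2^{13n/3+4}\approx 4.4\times 10^{11}$ exceeds $162019556021$; you must, as the paper does, re-run the Lang--Weil inequality directly with $\delta\leq 2^8$ to see that the stated bound suffices.
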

\begin{proof}
$(\Leftarrow) \ $ It follows from \cite{LunardonPolverino2001,sheekey2016new}, Theorems \ref{thm n=6 caratt scatt}, \ref{thm n=8 timp zini}, 
 \ref{thm equivalenza aggiunto}, and Proposition \ref{prop equiv CMPZ somma eq n/2}. \\
 $(\Rightarrow) \ $ Assume that $f$ does not satisfy any of the conditions $(i),(ii),(iii).$ By \cite[Theorem 3.4]{zanella2019condition} and  Theorems \ref{thm n=6 caratt scatt} and \ref{thm n=8 timp zini} we can assume without restriction $I,J,n$ as at the beginning of Section \ref{sect
class bin n fino a 8}. Consider the varieties $\mathcal{C}^{\prime}_{f},\mathcal{V},\mathcal{W},\mathcal{Z}$ introduced in Subsection \ref{sec link variety scatt} and in Section \ref{sec struc f W}.   By Corollaries \ref{cor Z irr gcd 2 n 6}, \ref{cor Z irr gcd 2 n 8} and Proposition \ref{prop Z irr gcd 4 n 8 q even} it follows that $\mathcal{Z}$ is an absolutely irreducible variety. Then Proposition \ref{prop W meno x y ass irr iff Z ass irr} implies that $\mathcal{W} \setminus (\mathcal{X} \cup \mathcal{Y})$ is absolutely irreducible, and it corresponds via the projectivity \[(x_0,\dots,x_{n-1},y_0,\dots,y_{n-1}) \mapsto (X_0,\dots,X_{n-1},Y_0,\dots,Y_{n-1})\] of $\mathbb{P}^{2n-1}(\Fn)$ defined in Subsection \ref{sec link variety scatt} to an absolutely irreducible component of $\mathcal{V}$ defined over $\fq,$ not contained in $x_0=x_1=\dots=x_{n-1}$ or $y_0=y_1=\dots=y_{n-1}=0.$ The statement follows as in the proof of Theorem \ref{main theorem}.
\end{proof}
The previous result addresses in particular a conjecture stated in \cite{TimpanellaZini2024} (see also \cite[Remark 7.4]{csajbok2018new}).
\begin{cor}
    There are no scattered binomial over $\F_{q^8}$ of type $f(x)=x^{q^s}+\delta x^{q^s+4},$ for $q$ even and large enough.
\end{cor}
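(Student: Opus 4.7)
The plan is to derive the corollary as an immediate consequence of the main classification theorem just established, by showing that none of its three sufficient conditions $(i)$, $(ii)$, $(iii)$ can hold for a binomial of the stated shape when $n = 8$ and $q$ is even. Writing $f(x) = x^{q^I} + \delta x^{q^J}$ with $I = s$ and $J \equiv s + 4 \pmod{8}$, after reordering monomials if necessary one may restrict to $s \in \{1,2,3\}$: the values $s \in \{5,6,7\}$ reduce to these upon swapping $I$ and $J$, while $s = 4$ yields the exponent set $\{0,4\}$ and a polynomial that is $\F_{q^4}$-linear, hence trivially not scattered over $\fq$.

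Next I would eliminate the three conditions in turn. Condition $(ii)$ requires $n = 6$, which is incompatible with $n = 8$. Condition $(iii)$ explicitly requires $q$ odd; in characteristic $2$ the constraint $\alpha^{1+q^4} = -1$ collapses to $\textnormal{N}_{q^8/q^4}(\alpha) = 1$, which by Remark \ref{rem norm no 1 and I < n/4} already prevents $f$ from being scattered. For condition $(i)$, the equations $I + J = 8$ and $J - I = 4$ force $I = 2$ and $J = 6$, but then $\gcd(I, n) = 2 \neq 1$, violating the coprimality condition. The boundary case $s = 2$ deserves a separate remark: here $\gcd(I, J, n) = 2$, so $f$ is actually $\F_{q^2}$-linear, and scaling by any $\lambda \in \F_{q^2} \setminus \fq$ preserves $f(x)/x$ while violating $x/y \in \fq$, giving non-scatteredness directly and independently of the theorem.

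No serious technical obstacle remains once the classification theorem is in hand: the whole argument is a short piece of bookkeeping on the index $s$ modulo $8$, and all the heavy lifting has already been absorbed into Proposition \ref{prop Z irr gcd 4 n 8 q even} and the variety machinery of Subsection \ref{sec link variety scatt}. This confirms the conjecture of \cite{TimpanellaZini2024} and extends \cite[Remark 7.4]{csajbok2018new} to the even-characteristic setting.
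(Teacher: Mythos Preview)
Your proof is correct and follows the same route the paper intends: the corollary is stated in the paper immediately after the main classification theorem with no separate argument, so it is meant to be read as a direct consequence of that theorem, exactly as you treat it. Your case analysis on $s$ and elimination of conditions $(i)$, $(ii)$, $(iii)$ is accurate; the extra remarks on $s=2$ (via $\F_{q^2}$-linearity) and $s=4$ (via $\F_{q^4}$-linearity) are correct and simply make explicit the ``trivial cases'' the paper excludes at the end of Section~2.2.
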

\section{The variety $\mathcal{Z}$ for the known scattered binomials}
We have already observed in Remark \ref{rem produttore uguali iff I+J=n} that for $I+J=n$ and $\textnormal{N}_{q^n/q}(\alpha)\neq 1,$ the variety $\mathcal{W}\setminus (\mathcal{X}\cup\mathcal{Y})$ splits in $n$ components. 
In this concluding section, we aim to analyze the behavior of the variety $\mathcal{Z}$, and consequently that of $\mathcal{W} \setminus (\mathcal{X} \cup \mathcal{Y})$, when the binomial belongs to the families of scattered binomials introduced in \cite{csajbok2018new} for $n=6,8$.

 In particular we will show that $\mathcal{Z}$ decomposes into absolutely irreducible components not fixed by the collineation $\Psi_q,$ introduced in Subsection \ref{sec link variety scatt}, which correspond to absolutely irreducible components of $\mathcal{V}$ not defined over $\fq.$ Finally, starting from this decomposition, we will derive the  conditions on the coefficient $\alpha$ obtained in \cite{csajbok2018new}. We will consider the cases $n=6$ and $n=8$ separately.
\subsection{Case $n=6$}
In the following we will consider binomials of type $f(x)=x^q+\alpha x^{q^4} \in \F_{q^6}[x],$ with $\textnormal{N}_{q^6 / q^3}(\alpha)=\alpha^{1+q^3}\neq 1.$ Such a family has already been studied in \cite{BartoliCsajbokMontanucci2021,PolverinoZullo2020rootsof...}, where the authors proved, using different techniques, necessary and sufficient conditions on the coefficient $\alpha \in \F_{q^6}$ to obtain scattered  binomials. In what follows we will show how to get the same conditions on $\alpha$ by analyzing the decomposition of $\mathcal{W}$ in absolutely irreducible components. Set $\beta=\alpha^{1+q^3}\in \F_{q^3}.$\\
By direct computations, it is possible to check that the variety \begin{equation}\label{eq L1L2L3}
    \mathcal{Z}:\begin{cases}
    L_1:=(X_0-X_1)(X_3-X_4)-\beta(X_0-X_4)(X_3-X_1)=0 \\
    L_2:=(X_1-X_2)(X_4-X_5)-\beta^q(X_1-X_5)(X4-X_2)=0 \\
    L_3:=(X_2-X_3)(X_5-X_0)-\beta^{q^2}(X_2-X_0)(X_5-X_3)=0 
\end{cases}
\end{equation}  reads
\begin{equation}\label{sist sole x caso gcd 3 n eq 6}
    \mathcal{Z}:\begin{cases}
    X_0=\frac{(\beta-1)X_1X_4 - \beta X_3X_4 + X_1X_3 }{\beta X_1 + (1-\beta)X_3 - X_4} \\
    X_2=\frac{(\beta^q-1)X_1X_4 - \beta^qX_4X_5 + X_1X_5}{\beta^qX_1 + (1-\beta^q)X_5 - X_4} \\
    G(X_1,X_3,X_4,X_5)=0, 
\end{cases}
\end{equation} with  
    \begin{eqnarray*}
        G(X_1,X_3,X_4,X_5)&=&(\beta -1)^{1+q} \cdot F_1(X_{1},X_{3},X_{4},X_{5})\cdot F_2(X_{1},X_{3},X_{4},X_{5})\\&=&(\beta -1)^{1+q} \cdot  (X_{1}X_{4}+AX_{1}X_{3}+BX_{1}X_{5}+BX_{3}X_{4}+AX_{4}X_{5}+X_{3}X_{5})\\ &&\cdot(X_{1}X_{4}+CX_{1}X_{3}+DX_{1}X_{5}+DX_{3}X_{4}+CX_{4}X_{5}+X_{3}X_{5}),
    \end{eqnarray*}
   where  $A,C \in  \F_{q^6}$ and $B,D \in  \F_{q^6}$ are respectively the roots of \begin{equation}\label{eq p1 cond A,C}
        p_1(T):=(\beta-1)^{1+q}T^2+(\beta^{1+q} - \beta^{1+q^2} + \beta^{q+q^2} - 2\beta^q + 1)T+\beta^q(\beta-1)^{q^2}
    \end{equation} and 
    \begin{equation}\label{eq p2 cond B,D}
        p_2(T):=(\beta-1)^{1+q}T^2+(\beta^{1+q} + \beta^{1+q^2} - \beta^{q+q^2} - 2\beta + 1)T+\beta(\beta-1)^{q^2}.
    \end{equation}
The following proposition is now immediate.
\begin{cor}
    Let $\mathcal{Z}$ be defined by System \eqref{sist sole x caso gcd 3 n eq 6}. Then $\mathcal{Z}$ decomposes in two absolutely irreducible components $\mathcal{Z}=\mathcal{W}_{1} \cup \mathcal{W}_{2},$ where \begin{equation}
   \mathcal{W}_i:\begin{cases}
    X_0=\frac{(\beta-1)X_1X_4 - \beta X_3X_4 + X_1X_3 }{\beta X_1 + (1-\beta)X_3 - X_4} \\
    X_2=\frac{(\beta^q-1)X_1X_4 - \beta^qX_4X_5 + X_1X_5}{\beta^qX_1 + (1-\beta^q)X_5 - X_4} \\
    F_i(X_1,X_3,X_4,X_5)=0, 
\end{cases} \ \ \textnormal{for} \  i=1,2.
\end{equation}
\end{cor}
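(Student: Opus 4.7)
The plan rests on the explicit factorisation $G=(\beta-1)^{1+q}F_1F_2$ recorded just before the statement, combined with the rational description of $\mathcal{Z}$ in \eqref{sist sole x caso gcd 3 n eq 6}. Since the standing hypothesis $\beta=\alpha^{1+q^3}\neq 1$ makes $(\beta-1)^{1+q}$ nonzero, the third equation of $\mathcal{Z}$ is equivalent to $F_1=0$ or $F_2=0$; coupled with the two rational formulas for $X_0$ and $X_2$, this yields the set-theoretic decomposition $\mathcal{Z}=\mathcal{W}_1\cup\mathcal{W}_2$. What is left to establish is the absolute irreducibility of each $\mathcal{W}_i$.

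To this end I would invoke that the coordinate projection $(X_0,\ldots,X_5)\mapsto(X_1,X_3,X_4,X_5)$ restricts to a birational map from $\mathcal{W}_i$ onto the affine hypersurface $\{F_i=0\}\subset\mathbb{A}^4$, with inverse provided by the two displayed rational formulas on the open set where the denominators $\beta X_1+(1-\beta)X_3-X_4$ and $\beta^qX_1+(1-\beta^q)X_5-X_4$ do not vanish. Since birational equivalence preserves the number (and absolute irreducibility) of components, the problem reduces to proving that each quadratic form $F_i\in\overline{\fq}[X_1,X_3,X_4,X_5]$ is absolutely irreducible.

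For this last step, I would write $F_1=X_1L_1+Q_1$ with $L_1=X_4+AX_3+BX_5$ and $Q_1=BX_3X_4+AX_4X_5+X_3X_5$. Since $F_1$ has total degree $2$ and degree $1$ in $X_1$, a nontrivial factorisation forces $L_1\mid Q_1$; substituting $X_4\mapsto -AX_3-BX_5$ into $Q_1$ reduces this divisibility to the polynomial identity
\[
-AB(X_3^2+X_5^2)+(1-A^2-B^2)X_3X_5\equiv 0
\]
(in characteristic two the analogous expression $AB(X_3+X_5)^2+(1+A+B)^2X_3X_5$ must vanish, by the same manipulation). In either case the coefficient of $X_3^2$ gives $AB=0$. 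However \eqref{eq p1 cond A,C}--\eqref{eq p2 cond B,D} show that $A,C$ are roots of a polynomial with constant term $\beta^q(\beta-1)^{q^2}$ and $B,D$ of a polynomial with constant term $\beta(\beta-1)^{q^2}$; under the hypothesis $\beta\notin\{0,1\}$ all four roots are nonzero, so $AB\neq 0$ and $F_1$ is absolutely irreducible. The identical argument applied to the pair $(C,D)$ settles $F_2$.

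The main obstacle I anticipate is not any of the steps above, but rather the clean justification of the factorisation $G=(\beta-1)^{1+q}F_1F_2$ itself: this is a bookkeeping verification in which the coefficients on both sides must be matched using Vi\`ete's relations $A+C$, $AC$, $B+D$, $BD$ coming from $p_1$ and $p_2$, and it is the only place where the specific pairing of roots $(A,B)$ versus $(C,D)$ enters. Once that factorisation is in hand, the rest of the argument proceeds exactly as described, and one recovers both the decomposition and the absolute irreducibility of the two components simultaneously from the single nondegeneracy condition $\beta\notin\{0,1\}$.
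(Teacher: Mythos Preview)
Your proposal is correct and follows the same line as the paper, which in fact offers no argument at all: the corollary is introduced with ``The following proposition is now immediate'' and simply records the consequence of the displayed factorisation $G=(\beta-1)^{1+q}F_1F_2$. Your write-up supplies the detail the paper omits, namely that each $F_i$ is absolutely irreducible. The reduction via the birational projection to $\{F_i=0\}$ is exactly the right mechanism, and your linear-in-$X_1$ argument (reducing a putative factorisation to $L_1\mid Q_1$ and then to $AB=0$) is clean and works uniformly in both characteristics; the nonvanishing of the constant terms of $p_1,p_2$ under $\beta\notin\{0,1\}$ is precisely what is needed.

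Two minor remarks. First, your ``main obstacle'' paragraph is well placed: the factorisation of $G$ (including the specific pairing of a root of $p_1$ with a root of $p_2$ in each factor) is indeed a coefficient-matching exercise via Vi\`ete, and is the only nontrivial computational input; the paper simply asserts it. Second, the statement speaks of \emph{two} components, so strictly one should observe $F_1\neq F_2$; this amounts to $p_1$ or $p_2$ not having a double root, which neither you nor the paper addresses explicitly, though it does not affect the irreducibility argument you give.
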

\begin{remark} \label{rem psi w1 eq w2 o psi w1 eq w1}
    Since $\Psi_q(\mathcal{Z})=\mathcal{Z},$ it follows that either $\Psi_q(\mathcal{W}_1)=\mathcal{W}_2$ and $\Psi_q(\mathcal{W}_2)=\mathcal{W}_1,$ or $\Psi_q(\mathcal{W}_1)=\mathcal{W}_1$ and $\Psi_q(\mathcal{W}_2)=\mathcal{W}_2.$ In the latter case the components $\mathcal{W}_1$ and $\mathcal{W}_2$ correspond via the projectivity over $\Fn$ defined in Subsection \ref{sec link variety scatt} to absolutely irreducible components of $\mathcal{V}$ defined over $\fq.$ Then Theorem \ref{thm lang weil versione tredici terzi}  and Proposition \ref{link with curves} yield that $f(x)=x^{q}+\alpha x^{q^4}$ is not scattered over $\F_{q^6},$ for $q$ large enough. \\ In the former case $\Psi_q^2(\mathcal{W}_1)=\mathcal{W}_1,$  $\Psi_q^2(\mathcal{W}_2)=\mathcal{W}_2$, and $\mathcal{W}_1,$ $\mathcal{W}_2$ correspond to absolutely irreducible components of $\mathcal{V}$ defined over $\F_{q^2}.$ Thus $f(x)=x^{q}+\alpha x^{q^4}\in \F_{q^6}[x]$ is scattered if and only if there are no points fixed by the collineation $\Psi_q$ in $\mathcal{W}_1\cap \mathcal{W}_2.$  One can check that there are no such points in $\mathcal{Z},$ and thus  $f$ is scattered on $\F_{q^6}.$
\end{remark}

\begin{prop} \label{prop una il frob dell'altra se A in Fq3}
We have that $\Psi_q(\mathcal{W}_1)=\mathcal{W}_2$ and $\Psi_q(\mathcal{W}_2)=\mathcal{W}_1$ if and only if $A,B,C,D \in \F_{q^3}.$    
\end{prop}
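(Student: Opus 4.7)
The plan is to analyze $\Psi_q$ through its third iterate. Since $\Psi_q$ permutes the two-element set $\{\mathcal{W}_1,\mathcal{W}_2\}$, its induced action has order at most $2$, so $\Psi_q$ and $\Psi_q^3$ induce the same permutation on $\{\mathcal{W}_1,\mathcal{W}_2\}$. I would then factor $\Psi_q^3=\sigma_3\circ \phi^3$, where $\sigma_3$ denotes the cyclic shift of indices $X_i\mapsto X_{i+3 \pmod 6}$ and $\phi^3$ is the coordinate-wise $q^3$-Frobenius; these commute and each induces an involution on $\{\mathcal{W}_1,\mathcal{W}_2\}$, so it suffices to determine whether their product is trivial or nontrivial on components.

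For the action of $\phi^3$: since $\beta\in\F_{q^3}$, the polynomials $p_1,p_2$ in \eqref{eq p1 cond A,C} and \eqref{eq p2 cond B,D} lie in $\F_{q^3}[T]$. Hence the roots of each are either both fixed by $\phi^3$ (i.e.\ both in $\F_{q^3}$) or form a Galois pair over $\F_{q^3}$ swapped by $\phi^3$, in which case $A^{q^3}=C$ and $B^{q^3}=D$. Using that $F_1F_2\in\F_{q^3}[X_1,X_3,X_4,X_5]$ and the uniqueness of the factorization into monic (in $X_1X_4$) quadratic factors, one can verify that the mixed configuration (say $A\in\F_{q^3}$ but $B\notin\F_{q^3}$) is impossible, as it would force a coefficient such as that of $X_1^2X_3X_5$ (which equals $AD+BC$) to lie outside $\F_{q^3}$. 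Hence either $A,B,C,D\in\F_{q^3}$ and $\phi^3$ fixes each of $F_1,F_2$, or none of them is in $\F_{q^3}$ and $\phi^3$ sends $F_1$ to $F_2$; equivalently, $\phi^3$ fixes components iff $A,B,C,D\in\F_{q^3}$.

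For the action of $\sigma_3$: the key claim is that $\sigma_3$ \emph{always} swaps $\mathcal{W}_1$ and $\mathcal{W}_2$, independently of $A,B,C,D$. Each $L_i$ is $\sigma_3$-invariant by direct inspection, so $\sigma_3$ preserves $\mathcal{Z}$ and permutes its two components. To show the permutation is nontrivial, I would compute the pullback $\sigma_3^*(F_1)=F_1(X_4,X_0,X_1,X_2)$, substitute the rational expressions of $X_0,X_2$ in $X_1,X_3,X_4,X_5$ coming from $L_1,L_2$, and check that after clearing denominators the resulting polynomial is a nonzero scalar multiple of $F_2$ rather than $F_1$. Alternatively, it suffices to exhibit a single point of $\mathcal{W}_1$ whose $\sigma_3$-image lies on $\mathcal{W}_2$ but not on $\mathcal{W}_1$.

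Combining the two analyses, $\Psi_q^3=\sigma_3\phi^3$ is the composition of the swap (from $\sigma_3$) with the action of $\phi^3$, so it swaps iff $\phi^3$ acts trivially iff $A,B,C,D\in\F_{q^3}$. Since $\Psi_q$ and $\Psi_q^3$ induce the same permutation on $\{\mathcal{W}_1,\mathcal{W}_2\}$, the proposition follows. The main obstacle is the verification that $\sigma_3$ swaps $\mathcal{W}_1$ and $\mathcal{W}_2$: this is a uniform algebraic identity in the coordinate ring of $\mathcal{Z}$ that does not depend on $\beta$, but carrying it out cleanly requires some care with the rational substitutions, and the test-point route is probably the most efficient.
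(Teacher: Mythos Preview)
Your approach is essentially the paper's: both reduce to $\Psi_q^3$ (using that on a two-element set $\Psi_q$ and $\Psi_q^3$ induce the same permutation) and leave the key ideal-membership verification as a direct computation. Your factorization $\Psi_q^3=\sigma_3\circ\phi^3$ is a clean reorganization that unifies the paper's two case-by-case checks into the single claim that the pure shift $\sigma_3$ always swaps $\mathcal{W}_1$ and $\mathcal{W}_2$, while $\phi^3$ supplies the dichotomy on $A,B,C,D$; indeed, the paper's two ``direct computations'' (showing $\Psi_q^3(F_1)\in\langle L_1,L_2,F_2\rangle$ when $A,B\in\F_{q^3}$, and $\Psi_q^3(F_1)\in\langle L_1,L_2,F_1\rangle$ when $A^{q^3}=C,\ B^{q^3}=D$) are precisely this one $\sigma_3$-swap, read in the two $\phi^3$-regimes. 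One minor gap: your $AD+BC$ argument does not exclude the degenerate configuration $A=C\in\F_{q^3}$ with $B\notin\F_{q^3}$, since then $AD+BC=A(B+D)\in\F_{q^3}$ automatically; however your conclusion survives, because in that case $\phi^3(F_1)$ has coefficients $(A^{q^3},B^{q^3})=(C,D)$ and hence still equals $F_2$, so $\phi^3$ fixes each $F_i$ if and only if all of $A,B,C,D$ lie in $\F_{q^3}$.
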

\begin{proof}[Sketch of the proof]
 Since $\mathcal{Z}$ is fixed by $\Psi_q,$ it follows that $\Psi_q(\mathcal{W}_1)=\mathcal{W}_2$ (resp. $\Psi_q(\mathcal{W}_1)=\mathcal{W}_1$) if and only if $\Psi_q^3(\mathcal{W}_1)=\mathcal{W}_2$ (resp. $\Psi_q^3(\mathcal{W}_1)=\mathcal{W}_1$). \\ Moreover, from Equation \eqref{eq L1L2L3} we observe that $\Psi_q^3(L_i)=L_i,$ for $i=1,2,3,$ so $$\Psi_q^3(\mathcal{W}_1) = \begin{cases}
 L_1=0\\
 L_2=0\\
 \Psi_q^3(F_1)=0,
 \end{cases}$$ where \[\Psi_q^3(F_1)=X_1X_4+A\qqq(X_0X_4+X_1X_2)+B\qqq(X_2X_4+X_0X_1)+X_0X_2.\]
 If $A,B \in \F_{q^3}$ (so in particular $C,D \in \F_{q^3}$), then one can check by direct computations that $\Psi_q^3(F_1)$ lies in the ideal generated by $L_1,L_2,F_2$ in $\F_{q^6}[X_0,\dots,X_5],$ i.e. $\Psi_q^3(\mathcal{W}_1)=\mathcal{W}_2.$ \\If $A,B \in \F_{q^6} \setminus \F_{q^3},$ then by Equations \eqref{eq p1 cond A,C} and \eqref{eq p2 cond B,D} we get $A\qqq=C$ and $B\qqq=D$.  It is not difficult to see that $\Psi_q^3(F_1) \in \langle L_1,L_2,F_1 \rangle \subseteq \F_{q^6}[X_0,\dots,X_5]$ and $\Psi_q^3(\mathcal{W}_1)=\mathcal{W}_1.$
    \end{proof}

    \begin{remark}
    Remark \ref{rem psi w1 eq w2 o psi w1 eq w1} and 
     Proposition \ref{prop una il frob dell'altra se A in Fq3} show that $f=x^q+\alpha x^{q^4} \in \F_{q^6}[x]$ is scattered if and only if $A,B,C,D \in \F_{q^3}.$ This is consistent with \cite[Theorem 1.1]{BartoliCsajbokMontanucci2021} and Theorem \ref{thm n=6 caratt scatt}, since Equation \eqref{eq p1 cond A,C} (or equiv. Equation \eqref{eq p2 cond B,D}) admits both roots in $\F_{q^3}$ if and only if Equation \eqref{eq DELTA caso n=6 gcd 3} admits both roots in $\fq.$ To show this, we distinguish the cases $q$  odd and $q$ even. 
     
     For $q$ odd, we deduce   from Equations \eqref{eq p1 cond A,C} and \eqref{eq p2 cond B,D}  that  the discriminants of $p_1$ and $p_2$ with respect to $Z$ coincide and they are equal to  \begin{equation}
        \label{eq DELTA caso n=6 gcd 3} \Delta:=\textnormal{Tr}_{q^3 / q}(\beta^{2+2q})-2(\textnormal{N}_{q^3 / q}(\beta)\textnormal{Tr}_{q^3 / q}(\beta)+\textnormal{Tr}_{q^3 / q}(\beta^{1+q}))+8\textnormal{N}_{q^3 / q}(\beta)+1.
    \end{equation}
     Equation \eqref{eq DELTA caso n=6 gcd 3} yields $\Delta \in \fq,$ so $A,B,C,D \in \F_{q^3}$ if and only if $\Delta$ is a square in $\fq.$  One can check that $\Delta=\Big(\textnormal{N}_{q^3 / q}(\beta-1)^2\Big)\Big(\textnormal{Tr}_{q^3 / q}(\frac{\beta}{\beta-1})-1)^2-4\textnormal{N}_{q^3 / q}(\frac{\beta}{\beta-1})\Big),$ where the second factor  concides with the discriminant of Equations \eqref{eq tr N n=6 caratt scatt} in Theorem \ref{thm n=6 caratt scatt}. Thus the discriminant of Equation \eqref{eq p1 cond A,C} is a square in $\fq$ if and only if the discriminant of Equation \eqref{eq tr N n=6 caratt scatt} is a square in $\fq.$ 
     
     For $q$ even, Equations \eqref{eq p1 cond A,C} and  \eqref{eq p2 cond B,D} admit both roots in $\F_{q^3}$ if and only if $$\textnormal{Tr}_{q^3 / 2}\left( \dfrac{\beta \cdot \textnormal{N}_{q^3 / q}(\beta-1)}{\textnormal{N}_{q^3 / q}(\beta^{2+2q})+1}\right)=\textnormal{Tr}_{q / 2}\left( \dfrac{\textnormal{Tr}_{q^3 / q}(\beta) \cdot \textnormal{N}_{q^3 / q}(\beta-1)}{\textnormal{N}_{q^3 / q}(\beta^{2+2q})+1}\right)=0$$ while Equation \eqref{eq tr N n=6 caratt scatt} has both roots in $\fq$ if and only if $$\textnormal{Tr}_{q / 2}\left( \dfrac{ \textnormal{N}_{q^3 / q}(\frac{\beta}{\beta-1})}{(\textnormal{Tr}_{q^3 / q}(\frac{\beta}{\beta-1})-1)^2}\right)=\textnormal{Tr}_{q / 2}\left( \dfrac{\textnormal{N}_{q^3 / q}(\beta) \textnormal{N}_{q^3 / q}(\beta-1)}{\textnormal{N}_{q^3 / q}(\beta^{2+2q})+1}\right)=
     0.$$ One can check by direct computations that$$ \textnormal{Tr}_{q / 2}\left( \dfrac{\textnormal{Tr}_{q^3 / q}(\beta) \cdot \textnormal{N}_{q^3 / q}(\beta-1)}{\textnormal{N}_{q^3 / q}(\beta^{2+2q})+1}\right) + \textnormal{Tr}_{q / 2}\left( \dfrac{\textnormal{N}_{q^3 / q}(\beta) \textnormal{N}_{q^3 / q}(\beta-1)}{\textnormal{N}_{q^3 / q}(\beta^{2+2q})+1}\right)=0,$$ so $A,B,C,D\in \F_{q^3}$  if and only if  Equation \eqref{eq tr N n=6 caratt scatt} has two roots in $\fq.$
    \end{remark}
    \subsection{Case $n=8$}
    Set $f(x)=x^{q^I}+\alpha x^{q^{I+4}}\in \F_{q^8}[x] $ and $\beta=\alpha^{1+q^4}.$ Consider the variety $\mathcal{Z}$ defined  in System \eqref{sist sole x caso gcd 4}. By mimicking the  computations of Subsection \ref{subsect q even n 8 gcd 4}, we obtain 
    \begin{equation}
   \mathcal{Z}:\begin{cases}
    X_0=\frac{(\beta-1)X_1X_5 - \beta X_4X_5 + X_1X_4 }{\beta X_1 + (1-\beta)X_4 - X_5} \\
    a_1(X_1,X_3,X_5,X_7)X_{4}^2 +b_1(X_1,X_3,X_5,X_7)X_{4}+ c_1(X_1,X_3,X_5,X_7)=0\\
    X_2=\frac{(\beta^q-1)X_1X_5 - \beta^qX_5X_6 + X_1X_6}{\beta^qX_1 + (1-\beta^q)X_6 - X_5} \\
    a_2(X_1,X_3,X_5,X_7)X_{6}^2 +b_2(X_1,X_3,X_5,X_7)X_{6}+ c_2(X_1,X_3,X_5,X_7)=0, 
\end{cases} 
\end{equation}
 where \begin{eqnarray} \label{eq ai}
    a_1&=&\beta(\beta\qqq-1)X_{5} + \beta\qqq(1-\beta)X_{3}  + (\beta-1)X_{7} + (1-\beta\qqq)X_{1}, \\  \label{eq bi}
    b_1&=& (\beta-1)(\beta\qqq-1)(-X_{1}X_{5}+X_{3}X_{7}) +(\beta^{1+q^3} -1)(X_{1}X_{3}  -X_{5}X_{7}),\\ \nonumber &&+(\beta\qqq-\beta)(X_{1}X_{7} -X_{3}X_{5})\\   \label{eq ci}
    c_1&=& (\beta-1)(\beta\qqq X_{1}X_{5}X_{7}-X_{1}X_{3}X_{5}) + (\beta\qqq-1)(X_{3}X_{5}X_{7}-\beta X_{1}X_{3}X_{7}) , 
\end{eqnarray}
and $a_2=\Psi_q^2(a_1),$ $b_2=\Psi_q^2(a_1),$ $c_2=\Psi_q^2(c_1).$
Now set $\Delta_1:=b_1^2-4a_1c_1$ and $\Delta_2=a_2^2-b_2c_2.$ Then $\Psi_q^2(\Delta_1)=\Delta_2$ and one can check that \begin{eqnarray*}
        \Delta_1&=& (\beta-1)^{2+2q^3}  F_1(X_{1},X_{3},X_{5},X_{7})F_2(X_{1},X_{3},X_{5},X_{7})\\&=&(\beta-1)^{2+2q^3} (X_{1}X_{5}+AX_{1}X_7+BX_{1}X_{3}+BX_{5}X_{7}+AX_{3}X_{5}+X_{3}X_{7})\\ &&\cdot(X_{1}X_{5}+CX_{1}X_7+DX_{1}X_{3}+DX_{5}X_{7}+CX_{3}X_{5}+X_{3}X_{7}),
    \end{eqnarray*}
    where  $A,C\in \F_{q^4}$ (resp. $B,D \in \F_{q^4}$), are the roots of \begin{eqnarray*}
        p_1(T)&:=& (\beta-1)^{2+2q^3} T^2-2(\beta+\beta\qqq)(\beta-1)^{1+q^3}T+(\beta\qqq-\beta)^2\\
       \big(\textnormal{resp. of } \ p_2(T)&:=& (\beta-1)^{2+2q^3}  T^2+2(\beta^{1+q^3}+1)(\beta-1)^{1+q^3}T+(\beta^{1+q^3}-1)^2\big).
    \end{eqnarray*} 
    It is immediate to verify that $A\neq C$ and $B\neq D,$  so $F_1$ and $F_2$ are distinct absolutely irreducible polynomials and $\Delta_1,\Delta_2$ are not squares in $\F_{q^4}(X_1,X_3,X_5,X_7).$ This implies that $a_1X_4^2 + b_1X_4 + c_1$ and $a_2X_6^2 + b_2X_6 + c_2 = \Psi_q^2(a_1X_4^2 + b_1X_4 + c_1)$ are also absolutely irreducible.

    Therefore a necessary and sufficient condition for $\mathcal{Z}$ to be reducible is that $\Delta_1 = \gamma \Delta_2$, $\gamma \in \overline{\mathbb{F}_q}$,  i.e. either $(A,C)=(B\qq,D\qq)$ or   $(A,C)=(D\qq,B\qq)$.  
    
    This is equivalent to saying that the polynomials $p_1(T)$ and $p_2(T)\qq$ have the same roots, that is 
    \begin{equation}\label{Eq:finale1}
        - \dfrac{\beta+\beta\qqq}{(\beta-1)^{1+q^3}}=\dfrac{\beta^{q+q^2}+1}{(\beta-1)^{q+q^2}} \ \ \ \ \ \textnormal{and} \ \ \ \ \ \ \dfrac{(\beta\qqq-\beta)^2}{(\beta-1)^{2+2q^3}}=\dfrac{(\beta^{q+q^2}-1)^2}{(\beta-1)^{2q+2q^2}}.
    \end{equation}

    Note that $\beta=-1$ trivially verifies the above conditions. On the other hand, \eqref{Eq:finale1} implies $(\beta\qqq-\beta)(\beta^{q+q^2}+1)=\pm(\beta\qqq+\beta)(\beta^{q+q^2}-1),$ i.e., $\beta^{q^3+q^2+q-1}=1.$ Since $\beta \in \F_{q^4}\setminus\{1\}$ and $\gcd(q^4-1,q^3+q^2+q-1)=\gcd(2q-2,q^3+q^2+q-1)=2,$ one finally deduces $\beta=\alpha^{1+q^4}=-1.$

\section*{Acknowledgements}
The authors thank the Italian National Group for Algebraic and Geometric Structures and their Applications (GNSAGA—INdAM)
which supported the research. 

\section*{Declarations}
{\bf Conflicts of interest.} The authors have no conflicts of interest to declare that are relevant to the content of this
article.

\bibliographystyle{acm}

\end{document}